\documentclass[11pt,leqno]{amsart}
\usepackage[backend=bibtex, citestyle=alphabetic, bibstyle=alphabetic, eprint=false, url=false, maxbibnames=5, maxcitenames=5]{biblatex}
\allowdisplaybreaks
\usepackage{mathrsfs}
\usepackage{amsmath, amssymb, mathtools, xfrac}
\usepackage{csquotes}
\usepackage{abstract}
\usepackage[hidelinks]{hyperref}
\usepackage{marginnote}
\usepackage{xcolor}
\usepackage{graphicx}%To scale align blocks in appendix
\usepackage{orcidlink}

\usepackage{etoolbox}
\makeatletter
\patchcmd{\@maketitle}{\newpage}{}{}{} 
\makeatother
\numberwithin{equation}{section}

\theoremstyle{definition}
\newtheorem{definition}{Definition}[section]
\newtheorem{example}[definition]{Example}
\newtheorem{remark}[definition]{Remark}
\theoremstyle{plain}
\newtheorem{theorem}[definition]{Theorem}
\newtheorem{lemma}[definition]{Lemma}
\newtheorem{corollary}[definition]{Corollary}
\newtheorem{prop}[definition]{Proposition}

\newtheorem{ass}[definition]{Assumption}

\newcommand{\C}{\mathbb{C}}

\newcommand{\g}{\overline{g}}

\renewcommand{\j}{\jmath}

\newcommand{\M}{\overline{M}}

\newcommand{\N}{\mathbb{N}}

\newcommand{\R}{\mathbb{R}}

\newcommand{\T}{\mathbb{T}}

\newcommand{\Z}{\mathbb{Z}}
\renewcommand{\epsilon}{\varepsilon}

\newcommand{\del}{\partial}
\newcommand{\Lap}{\Delta}
\renewcommand{\div}{\text{\normalfont{div}}}
\newcommand{\curl}{\text{\normalfont{curl}}}

\newcommand{\numberthis}{\addtocounter{equation}{1}\tag{\theequation}}

\renewcommand{\theequation}{\arabic{section}.\arabic{equation}}

\usepackage{bm}

\title{Landau damping on expanding backgrounds}
\author{David Fajman, Liam Urban}
\usepackage[foot]{amsaddr}
\address{
\begin{tabular}[h]{l@{\extracolsep{8em}}l} 
David Fajman  & Liam Urban \\
Faculty of Physics & Faculty of Physics\\ 
University of Vienna & University of Vienna \\
Boltzmanngasse 5 & Boltzmanngasse 5\\
1090 Vienna, Austria & 1090 Vienna, Austria\\
david.fajman@ univie.ac.at & liam.urban@ univie.ac.at \\
\orcidlink{0000-0003-3034-6232}\ 0000-0003-3034-6232 & \orcidlink{0000-0001-9185-9627}\ 0000-0001-9185-9627
\end{tabular}
}
\begin{document}

\maketitle

\begin{abstract}
We analyse the effect of expansion in Newtonian cosmology on the asymptotic behaviour of charged self-interacting plasmas close to Poisson equilibria. To this end, we study the Vlasov-Poisson system on the phase space of a $3$-torus which is expanding with respect to the scale factor $a(t)$. We show that, for $a(t)=t^q$ with $q\in(0,\frac12)$, solutions to this system exhibit nonlinear Landau damping for initial data that is small with respect to a suitably strong Gevrey class, i.e., the charge density contrast of the plasma decays superpolynomially. For larger choices of $q$ within this range, the initial data requirements become stricter while the decay weakens. To our knowledge, this is the first result showing Landau damping in a cosmological setting. 
\end{abstract}

\section{Introduction}\label{sec:intro}

We consider the Vlasov-Poisson system on $\T^3\times\R^3$ with repulsive particle interaction and near a nontrivial charged expanding Poisson equilibrium $f_0:(0,\infty)\times\R^3\longrightarrow(0,\infty)$ with
\begin{subequations}
\begin{equation}\label{eq:distr-shape}
f_0(t,v)=\mu(a(t)\,v)=\tilde{\mu}(a(t)\,\lvert v\rvert)\,.
\end{equation}
Here, $a:(0,\infty)\rightarrow(0,\infty)$ is a smooth, strictly increasing function that describes the growth of length scales in time, and the Poisson background is given by
\begin{equation}
\tilde{\mu}(\lvert v\rvert)=\frac1{\pi^2}\,\frac{\theta_0}{(\theta_0^2+\lvert v\rvert^2)^{2}}
\end{equation}
for some $\theta_0>0$. The equilibrium is normalised such that $\int_{\R^3}\mu(v)dv=1$, and thus the total charge density $\underline{\rho}_0$ of the matter distribution $f_0$ satisfies
\begin{equation}
\underline{\rho}_0(t,x)=\int_{\R^3} f_0(t,x,v)dv=a(t)^{-3}\,.
\end{equation}
\end{subequations}
Let $g:(0,\infty)\times\T^3\times\R^3\longrightarrow\R$ denote the deviation of the total particle distribution $f=f_0+g\geq 0$ from the Poisson background. Then, the Vlasov-Poisson equations for $g$ take the following form.
\begin{subequations}\label{eq:perturbation-evol}
\begin{gather}
\del_tg+\frac{v^i}{a}\del_{x^i}g-\frac{\dot{a}}a\,v^i\del_{v^i}g-\,a^{-1}(\del_xW)\cdot(\del_v)g-a^{-1}(\del_xW)\cdot (\del_vf_0)=0\,,\label{eq:perturbation-evol-landau}\\
 \Lap W=-4\pi\,a^2\,\underline{\rho},\quad \underline{\rho}=\int_{\R^3} g(\cdot,\cdot,v)dv\,.\label{eq:perturbation-evol-poiss}
\end{gather}
\end{subequations}
%\begin{subequations}
%\begin{equation}\label{eq:distr-shape}
%f_0(t,v)=\tilde{\mu}(a(t)^2\,\lvert v\rvert^2)\,,
%\end{equation}
%where $\tilde{\mu}$ is a positive analytic function such that, for
%\begin{equation}
%\mu(v)=\tilde{\mu}(\lvert v\rvert^2)=f_0(t,a(t)^{-1}v)\,
%\end{equation}
%one has $\int_{\R^3}\mu(v)dv=1$. We then write
%\begin{equation}
%\rho_0(t,x)=\int_{\R^3} f_0(t,x,v)dv=a(t)^{-3}
%\end{equation}
%\end{subequations}

In addition, we assume that the perturbation is charge-neutral initially and thus remains so throughout, i.e., for an initial distribution $\mathring{g}(x,v)=g(t_0,x,v)$ at the initial time $t_0>0$, we assume

\begin{equation}\label{eq:perturbation-norm}
\int_{\T^3}\int_{\R^3} \mathring{g}(x,v)dxdv=\int_{\T^3}\underline{\rho}(t_0,x)dx=0\,.
\end{equation}

By this convention, $\sfrac{\underline{\rho}}{\underline{\rho}_0}$ describes the density contrast of the plasma distributed along $f$.\\

The system \eqref{eq:perturbation-evol} describes the behaviour of the collisionless charged gas $f$ and its deviation $g$ from the Poisson equilibrium $f_0$ in coordinates $(t,x,v)$ adapted to a comoving observer subject to expansion.\footnote{For a comparison of this system with relativistic Vlasov(-Maxwell) equations on a fixed Friedman-Lema\^\i tre-Robertson-Walker (FLRW) background, we refer to Remark \ref{rem:nonrel-limits}.} Similar systems for attractive particle interaction arise in Newtonian cosmology by rewriting the classical Vlasov-Poisson system in comoving coordinates, see for example \cite[Chapter II, Section 7ff.]{Pee80}. In these settings, one can consider the case where the rate of expansion is given by $\underline{\rho}_0$, which naturally restricts the expansion rates that can arise. To isolate the effects of introducing expansion to the Vlasov-Poisson system near a fixed background, we decouple the expansion rate from Vlasov matter via the standard approch of fixing the rate of expansion with an external, unperturbed source. The derivation of such a system from a classic Vlasov-Poisson system with an external field is contained in Appendix \ref{sec:newt-cosmo}. To obtain \eqref{eq:perturbation-evol}, one could, for example, assume that there is an additional background Poisson equilibrium for particles of opposite charge to cancel the effects of the plasma itself, as well as a perfect fluid with suitable linear equation of state to determine the rate of expansion. We elaborate on this in Remark \ref{rem:scale-factor-appendix}.\\

Our main result for small data solutions to \eqref{eq:perturbation-evol} can be summarised as follows.

\begin{theorem}[Rough statement of the main result]\label{thm:main-intro}
Let $a(t)=t^q$ for $q\in(0,\frac12)$ and let $\mathring{g}$ be small with respect to a sufficiently strong Gevrey norm. Then, the density contrast $\sfrac{\underline{\rho}}{\underline{\rho}_0}$ and the relative force $\nabla W$ induced by $g$ decay at a superpolynomial rate depending on the Gevrey class, and always stronger than $\exp\left(-c\,t^{\alpha}\right)$ for some $c>0$ and $\alpha=\sfrac{1-2q}3$.\\
Within the admissible range, the larger the rate of expansion is, the stronger the Gevrey class must be chosen, and the weaker our upper bound on the decay rate is.
\end{theorem}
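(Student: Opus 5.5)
The plan follows the Mouhot--Villani / Bedrossian--Masmoudi--Mouhot strategy for nonlinear Landau damping in Gevrey regularity, adapted to the expanding setting.

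\textbf{Step 1: change of variables.} First I would remove the expansion from the transport. I rescale the velocity as $p=a(t)v$, so that $f_0$ becomes the time-independent $\mu(p)$; the term $-\frac{\dot a}{a}v\cdot\del_v$ is then absorbed. The free transport becomes $\del_t+a^{-2}p\cdot\del_x$. I then introduce the new time
\[
s(t)=\int_{t_0}^t a(\tau)^{-2}\,d\tau ,
\]
which for $a=t^q$ behaves like $t^{1-2q}$. This is where $q<\frac12$ enters: $s\to\infty$, so phase mixing happens at all. In $s$ the linear part is the standard $\del_s+p\cdot\del_x$. The force term picks up a time-dependent prefactor: from $a^{-1}\del_xW$ with $\Lap W=-4\pi a^2\underline\rho$, and converting $dt=a^2ds$ and $\underline\rho$ to the $p$-density (which carries $a^{-3}$), I get a coupling $\kappa(s)\,\del_x\Phi\cdot\del_p(\mu+h)$ with $\kappa(s)\sim a^{\beta}$ for some power $\beta$ that I would compute. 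I expect this factor to grow polynomially in $s$; this growing coupling is the new difficulty compared with the flat case.

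\textbf{Step 2: linear (Penrose-type) analysis.} Next I pass to profile variables $h(s,x+sp,p)$ and take Fourier transforms. The density satisfies a Volterra equation
\[
\hat\rho(s,k)=\widehat{\text{(free part)}}+\int_0^s K(s,\sigma,k)\,\hat\rho(\sigma,k)\,d\sigma ,
\]
with kernel $K\sim\kappa(\sigma)\,\hat\mu\bigl(k(s-\sigma)\bigr)\,|k|^{-2}\,k\cdot k(s-\sigma)$. The Poisson profile has $\hat\mu(\eta)=e^{-\theta_0|\eta|}$, so it decays exponentially. The kernel therefore decays like $e^{-\theta_0|k|(s-\sigma)}$ times $\kappa(\sigma)$ growth. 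The difficulty is that the kernel is no longer of convolution type, so I cannot simply invoke a Penrose criterion. Instead I would solve the Volterra equation directly in weighted spaces, or else treat the problem as a perturbation on time intervals where $\kappa$ is nearly frozen. The goal is the bound
\[
|\hat\rho(s,k)|\lesssim e^{-\lambda(s)\,\langle k,ks\rangle^{\sigma}}
\]
with a radius $\lambda(s)$ that decreases slowly. The polynomial growth of $\kappa$ must be beaten by the decay $\hat\mu$ provides, and that decay is only available when Gevrey regularity of index $\sigma$ converts $\langle ks\rangle^{\sigma}$ decay into integrability against $\kappa$. This step fixes the relation between $q$ and the admissible Gevrey class: larger $q$ means a larger $\beta$ relative to the $s$-scale, hence a stronger class.

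\textbf{Step 3: nonlinear bootstrap.} I would then close a bootstrap in a time-dependent Gevrey norm. The controlled quantities are
\[
\|A h\|_{L^2},\qquad A=e^{\lambda(s)|\nabla|^{\sigma}}\langle\nabla\rangle^{N}\times(\text{BMM-type multiplier handling echoes}),
\]
together with a weighted $L^2_s$ bound on $A\hat\rho$. The nonlinear term $\kappa\,\nabla\Phi\cdot(\nabla_p-s\nabla_x)h$ is split by paraproduct into reaction and transport parts. The transport part is absorbed through the decay of $\lambda$ via a CK term. The reaction part produces plasma echoes, and these now come with the extra factor $\kappa(s)$. Controlling them is the main obstacle. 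I would first try to show that the echo chain amplification, roughly $\exp\bigl(C(\kappa\,s^{?})^{1/3}\bigr)$, still fits inside a Gevrey-$\frac13$-type budget once $\kappa$'s growth is included. The threshold exponent should come out as $\alpha=\frac{1-2q}{3}$ in the original time variable, matching $s^{1/3}\sim t^{(1-2q)/3}$.

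\textbf{Step 4: conclusion.} Finally I translate back to the original variables. The density contrast $\underline\rho/\underline\rho_0$ equals the $p$-density at frequency $k$, which is bounded by $e^{-c\,(|k|s)^{\sigma}}$. With $s\sim t^{1-2q}$ this gives superpolynomial decay, at least $\exp(-c\,t^{\alpha})$. The same bound, together with the elliptic estimate, controls $\nabla W$.
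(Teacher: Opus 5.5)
\textbf{Gap 1: the linear step (your Step 2).} This is where the paper's main new work lies, and as written your Step 2 has no working mechanism. Its guiding heuristic is also wrong: the decay of $\hat\mu$ does not beat the growth of the coupling. With $K_k(\tau,\tilde\tau)=-4\pi(\tau-\tilde\tau)e^{-\theta_0\lvert k\rvert(\tau-\tilde\tau)}a(T(\tilde\tau))$ one has $\int_0^\tau\lvert K_k(\tau,\tilde\tau)\rvert\,d\tilde\tau\simeq a(T(\tau))\lvert k\rvert^{-2}\to\infty$.

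Any argument that only sees $\lvert K_k\rvert$ (Neumann series, Gronwall, contraction in weighted spaces, the comparison Lemma \ref{lem:comparison}) is in effect treating the \emph{attractive} kernel. Its resolvent grows superexponentially for each fixed $k$ (Appendix \ref{subsec:gravit}; with frozen coupling $c$ its poles sit at $-\theta_0\lvert k\rvert\pm\sqrt{4\pi c}$). So no Gevrey decay of the source can compensate, and the repulsive sign must be used. The Fourier--Laplace/Penrose route, however, is unavailable for a non-convolution kernel. Your ``freeze $\kappa$'' alternative points the right way, since it would use the sign via Penrose. But it is not carried out, and it is delicate because the oscillation frequency $\sqrt{4\pi a(T(\tau))}$ is unbounded.

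The missing idea is the paper's. For the Poisson profile, $\hat M_k(s)=s\,e^{-\theta_0\lvert k\rvert s}$. Hence $u_{\tilde\tau}(\tau)=e^{\theta_0\lvert k\rvert(\tau-\tilde\tau)}(K_k-R_k)(\tau,\tilde\tau)$ is an integral against $(\tau-s)$, the Green kernel of $\partial_\tau^2$. The resolvent equation then becomes the oscillatory ODE $u_{\tilde\tau}''+4\pi a(T(\tau))u_{\tilde\tau}=16\pi^2a(T(\tau))a(T(\tilde\tau))(\tau-\tilde\tau)$ with $u_{\tilde\tau}(\tilde\tau)=u_{\tilde\tau}'(\tilde\tau)=0$. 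The Liouville--Green approximation controls this under Assumption \ref{ass:scale}.

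The outcome, Proposition \ref{prop:res-bd}, necessarily carries a polynomial loss: already with frozen coupling $c$ the resolvent is $-\sqrt{4\pi c}\,e^{-\theta_0\lvert k\rvert s}\sin(\sqrt{4\pi c}\,s)$. This loss becomes the factor $\langle\tilde\tau\rangle^{\beta'}$, $\beta'=\frac32\beta$, in Corollary \ref{cor:lin-damping-gevrey-concrete}. Your Step 2 target has no such loss, but it must be carried into the nonlinear estimates.

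\textbf{Gap 2: the regularity threshold (your Step 3).} The echoes do not fit into a Gevrey-$\frac13$ budget once the coupling grows. Each resonant interaction, concentrated at $k\tau\approx l\tilde\tau$, carries an extra $a(T(\tilde\tau))\sim\langle\tau\rangle^{\beta}$, on top of the linear loss $\langle\tau\rangle^{\beta'}$. Heuristically, each factor in the classical echo-chain count $\prod_k\epsilon\eta/k^3\approx e^{c(\epsilon\eta)^{1/3}}$ picks up $(\eta/k)^{\beta}$, which gives $e^{c\,\eta^{(1+\beta)/(3+\beta)}}$.

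Rigorously, the paper needs $\gamma>1-\frac{2}{3+\beta+\beta'}$ and $\sigma>\max\{4,2+\beta+\beta'\}$. These are forced by Cases 1b), 2b)(1) and 2b)(3)(b) in the proof of Lemma \ref{lem:ckl-pw}. So the $q$-dependence in the second sentence of the statement comes from this nonlinear step, not from the linear one, which is regularity-independent. The exponent $\alpha=\frac{1-2q}{3}$ is not a threshold; it is just the floor $\gamma>\frac13$ transported via $\tau\sim t^{1-2q}$.

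The transport (CK) term also comes with $a(T(\tau))\langle\tau\rangle\tilde F[\rho]$. Closing therefore requires $\tilde F[\rho]$ to decay faster than $\langle\tau\rangle^{-2-\beta}$; the paper takes $\eta=\frac52+\beta$ in Corollary \ref{cor:gevrey-imp}. Using BMM paraproducts instead of the paper's GNR-style generator functions $\tilde F,\tilde G$ with a sliding radius is a legitimate choice. Either way, the bootstrap closes only once these quantitative losses are built in.
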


A precise statement can be found in Theorem \ref{thm:main}. For $q=0$, \eqref{eq:perturbation-evol} is the classical Vlasov-Poisson system, in which case the phenomenon described in Theorem \ref{thm:main} is known as Landau damping, as discussed in Subsection \ref{subsec:lit}. To our knowledge, this work is the first result that establishes Landau damping in an expanding regime. 

%In the following, we consider the Poisson background $\mu(v)=\tilde{\mu}(\lvert v\rvert)$ given by
%\[\tilde{\mu}(r)=\frac1{\pi^2}\cdot \frac{\theta_0}{(\theta_0^2+\lvert r\rvert^2)^{2}}\,.\]
%Note that this is normalised such that the background has total mass 1, and that the Fourier transform of this satisfies
%\[\hat{\mu}(\xi)=2\,\exp(-\theta_0\,\lvert \xi\rvert)\]
%in our Fourier transform convention.\\
%We show that, if the expansion rate is sufficiently weak, the classical phenomenon of Landau damping extends to the expanding regime near Poisson backgrounds in the following sense.

\subsection{Connection to previous work}\label{subsec:lit}

\subsubsection{The classical Vlasov-Poisson system}

Solutions $f:(0,\infty)\times\T^d\times\R^d\longrightarrow\R$  to the free transport equation 
\[\del_t f(t,x,v)+v^i\del_{x^i}f(t,x,v)=0\]
are given by
\[f(t,x,v)=\mathring{f}(x-tv,v)\]
for a given initial distribution $\mathring{f}$ at $t=0$. Thus, the Fourier transform of the associated density $\underline{\rho}$ satisfies
\[\hat{\underline{\rho}}_k(t)=\hat{f}(t,k,0)=\hat{\mathring{f}}(t,k,kt).\]
For $k\neq 0$, it follows that $\hat{\underline{\rho}}_k$ decays polynomially if $\mathring{f}$ is Sobolev-regular in momentum space, and exponentially if it is analytic in momentum space. More precisely, the distribution weakly converges to its spatial average in $L^2(\T^d)$, i.e., 
\[f(t,\cdot,v)\rightharpoonup (2\pi)^{-d}\int_{\T^d} \mathring{f}(x,v)\,dx\quad \text{as}\quad t\to\infty,\]
and in this sense, the solution homogenizes. This effect is a special case of the more general phenomenon of \enquote{phase mixing}.\\

Landau damping describes a phenomenon for collisionless gases, first truly demonstrated by Landau in \cite{Lan46sow,Lan46en}, improving upon initial observations by Vlasov, see \cite{Vlasov45}. In short, it refers to strong decay of the electromagnetic field generated by the perturbation of a charged plasma near an equilibrium, or equivalently of non-zero modes of the charge density of a plasma close to an equilibrium. From the modern mathematical viewpoint discussed below, it is best understood as a specific phase mixing effect for the Vlasov-Poisson system, and not necessarily restricted to plasmas, see \cite{MV10,BMM16}. The most general condition for Landau damping to occur is if the background $\mu$ satisfies the Penrose stability condition, originating from \cite{Pen60}, namely
\begin{equation}\label{eq:penrose-classic}
\inf_{k\in\Z^d\backslash\{0\}}\inf_{\lambda\in\C,\Re(\lambda)\geq 0}\left\lvert 1-4\pi\epsilon_F\int_0^\infty e^{-\lambda\,s}s\,\hat{\mu}(ks)\,ds\right\rvert\geq \kappa>0\,.
\end{equation}
Here, $\hat{\mu}$ denotes the Fourier transform of $\mu$ and $\epsilon_F=\pm 1$ is positive for attractive/gravitational particle interaction and negative for repulsive/electrostatic interaction. If this condition holds, then non-zero modes of the particle density decay exponentially for large times, and do so uniformly in $k$. In the electrostatic setting for $d\geq 3$, the Penrose condition holds for any positive radial background, see for example \cite[p.127]{LP81} and \cite[p.38f.]{Vil10}. For a more in-depth introduction to the relationship between phase mixing for the free Vlasov equation and Landau damping, we refer to \cite{Vil10, Bed22rev}.\\
%This well-known homogenization effect is due to phase mixing, see also \cite[Section 3.0]{Vil10}. In this vein, Landau damping refers to the specific phase mixing effect for solutions to the Vlasov-Poisson system near equilibria.

The first full proof of nonlinear Landau damping is due to Mouhot and Villani in their breakthrough work \cite{MV10} for analytic perturbations, assuming the Penrose condition \eqref{eq:penrose-classic} to hold. In particular, in \cite[Proposition 2.1]{MV10}, it is shown that the Penrose condition unifies various other stability conditions previously present in the literature. That this result extends to $(\gamma^{-1})$-Gevrey regular perturbations with $\gamma>\frac13$, as conjectured in \cite[Section 13]{MV10} via a heuristic nonlinear analysis, has since been rigorously established over the course of multiple works that also greatly simplify the original proof, such as \cite{BMM16,GNR21}. In addition, the effect has recently even been demonstrated to hold in the Gevrey-critical regime $\gamma=\frac13$ in \cite{IPWW24}. This still rather strong regularity requirement seems necessary as Sobolev regularity is generally insufficient to control nonlinear resonances, often referred to as plasma echoes, see \cite{Bed21}. However, these resonances can be controlled on fixed time scales or for certain specific solution families, see \cite{GNR22,Zil25}. Finally, the explicit structure of the Poisson equilibrium has proven to be especially useful to study more difficult settings, as it was used in \cite{IPWW24R3} to show Landau damping on $\R^3\times\R^3$. Therein, it provided a direct method to analyse the effects of dispersion for low frequencies where an analogue of the Penrose condition would necessarily be violated.

\subsubsection{Newtonian gravity and cosmology}\label{subsec:newt}

To the extent that phase mixing effects have been studied in Newtonian gravity, this has largely been in the context of isolated bodies, where the Vlasov-Poisson system is coupled to an additional external field that simulates the gravitational effect of a large body, e.g., a Keplerian potential modeling an isolated star in \cite{CL24,HS25} or describing the potential well of a spherically symmetric black hole in \cite{RS20}. We also refer to the textbook \cite[Chapters 4f.]{BT08} for a broader overview on the Vlasov-Poisson system in stellar dynamics.\\ %Note to self - only the first edition discusses the Plasma case of Landau damping

Models arising from Newtonian cosmology often consider the special case of \eqref{eq:perturbation-evol} where the scale factor is determined by the rate of expansion, with a linearized stability analysis similar but not identical to that of its macroscopic analogue, the expanding Euler-Poisson system; see for example \cite[Chapter II]{Pee80}. Both the Vlasov- and Euler-Poisson systems admit power law solutions with $a(t)=C\,t^\frac23$ which matches the behaviour of the scale factor in a dust-filled FLRW universe, and one expects the so-called Jeans instability to lead to gravitational collapse. For the Euler-Poisson system, rigorous results appear to match this expectation: In the recent work \cite{FMOOW25} including the first author, the expanding Euler-Poisson system was studied, where the fluid satisfies a linear equation of state and the scale factor is of the form $a(t)=t^q$. Here, it was observed that homogeneous solutions are stable toward the future for $q\in(\frac23,1)$, while for $q\leq\frac23$, numerical studies indicate that shocks form in finite time for arbitrarily small perturbations. 

Less is known for the expanding Vlasov-Poisson system. Again in the case of gravitational particle interaction where the scale factor is entirely determined by the matter density itself, global existence for compactly supported data and backgrounds with compact momentum support was shown by Rein and Rendall in \cite{RR94}. The scale factors admissible therein either cause recollapse in finite time, exhibit dust-like power law expansion or grow linearly as $t\to\infty$. In the latter case, these equilibria were also shown to be nonlinearly stable in \cite{Rei97}, in the sense that dilution ensures that the density contrast remains small if it is small initially, while the power-law solution needs to be excluded explicitly. These results have been extended to the Boltzmann-Poisson system, in particular with positive cosmological constant modeled by an external field, in \cite{Lee10,Lee13,Lee16}. In all of these cases, this extension essentially works since admissible scale factors behave asymptotically similarly with and without collisions, and in particular remain strong enough to make collisions asymptotically negligible toward the future.\\

If one wants to precisely study the effect of expansion on the damping rate as we intend, one needs to decouple it from the matter conent itself and has to consider the Vlasov-Poisson system subject to an external field driving expansion, as discussed in Appendix \ref{sec:newt-cosmo}. In particular, we note that such an external field is time-dependent, as opposed to the time-independent field caused by a single static stellar body discussed at the start of this section.\\
Having chosen such a field, Theorem \ref{thm:main} shows that repulsive particle interaction leads to nonlinear stability, and actively homogenizes due to the occurance of Landau damping, if the expansion rate is sufficiently weak rather than sufficiently strong as in \cite{Rei97,Lee16,FMOOW25}.\\
One has to expect this to break down for self-gravitating matter and in three spatial dimensions, even for slow expansion rates and the Poisson equilibrium, as discussed in Appendix \ref{subsec:gravit}. Curiously, for $d\geq 4$, Landau damping actually extends to the self-interacting regime, which we formulate and prove more precisely in Section \ref{subsec:4+}. %In Appendix \ref{sec:4+}, we outline how this is, in essence, a direct extension of the proof of Landau damping for classical Vlasov-Poisson systems, in particular \cite{GNR21}, under a slightly adapted version of the Penrose condition. This is not restricted to the Poisson background and applies to both attractive and repulsive particle interaction. This is in stark contrast to classical Landau damping on $\T^d$, where \eqref{eq:penrose-classic} is a robust stability condition irrespective of dimension and the type of particle interaction. In this context, one should note that, when studying Landau damping for the Poisson equilibrium on $\R^3$ in \cite{IPWW24R3}, the authors observe that the necessary dispersive estimates only yield sufficient decay for $d\geq 3$, see Remark 1.2 therein. 

\subsubsection{Relativistic equations}\label{subsec:intro-rel}

While phase mixing results in cosmologically motivated settings are still rather sparse, we note that such effects have been observed for relativistic kinetic equations near black hole exteriors in a similar vein to the Newtonian phenomena discussed at the start of the previous section, see \cite{RS18,RS24}.\\

For the relativistic Vlasov-Poisson system with repulsive particle interaction, which arises as a special case of relativistic Vlasov-Maxwell system, Landau damping was shown for the linearised equations on $\T^3$ and $\R^3$ by Young in \cite{You15}, and then partially extended to the nonlinear setting in \cite{You16}. For both of these results, the fact that information cannot travel faster than the speed of light imposes limitations compared to the classical setting: \cite[Theorem 2.1]{You15} states that, in most settings, exponential decay of the density modes is prohibited, and thus any damping effect one can establish must be weaker than in the classical setting. Indeed, in \cite[Theorem 2.2]{You15}, the modes decay as $\exp(-ct^{\gamma})$ for $\gamma<1$. In \cite{You16}, the background is taken to be $(\gamma^{-1})$-Gevrey regular with $\gamma<1$, and nevertheless, one can only obtain damping in $({\gamma^\prime})^{-1}$-Gevrey classes with ${\gamma^\prime}<\gamma$.\\

In their recent work \cite{TVR25}, Taylor and Velozo Ruiz studied phase mixing for the relativistic massive Vlasov equation on a fixed FLRW background with power law expansion and without particle interaction. Therein, they were able to show that phase mixing occurs if the background expands with $t^q$ for $q\in(0,\frac12)$ via a vector field method. For Sobolev-regular initial data, the density contrast decays polynomially, and for analytic initial data, it decays superpolynomially. At the critical expansion rate $q=\frac12$, the density contrast exhibits logarithmic and polynomial decay respectively. \\
When comparing this work to our present results, it is useful to view \eqref{eq:perturbation-evol} as the formal nonrelativistic limit of the relativistic Vlasov-Maxwell system on an FLRW background after transforming the system into coordinates comoving with respect to expansion. We discuss this more detail in Remark \ref{rem:nonrel-limits}. From this viewpoint, Theorem \ref{thm:main} confirms that the critical threshold for phase mixing of the free transport equation is also the critical threshold of Landau damping when introducing repulsive Newtonian particle interaction. In fact, we even obtain stronger decay rates for the density contrast in Theorem \ref{thm:main} than in \cite{TVR25}, as discussed after Theorem \ref{thm:main}. It is not clear whether the decay bounds obtained in \cite{TVR25} are sharp, especially since Gevrey regularity is not its main focus. Nevertheless, this difference in decay rates may be due to differences between the relativistic system and the Newtonian model as in the works by Young on the relativistic Vlasov-Poisson system, but also due to the presence of a nontrivial background aiding decay of the perturbation, leading to Landau damping that is truly stronger than phase mixing for the free transport equation alone.\\

For the Einstein-Vlasov system, while homogenization of the matter content itself is not well understood, expansion does cause many cosmological solutions to dilute toward the future when the expansion rate is sufficiently strong, and thus the geometry on spatial slices in a suitable gauge converges to a spatially homogeneous limit. This includes accelerated de-Sitter-like expansion in \cite{Rin13,AndRin16} and linear expansion in \cite{AndFaj20,BaFaj20} as well as the $2+1$-dimensional results \cite{Faj17,Faj18}. To obtain results for weaker expansion rates, including the FLRW solution on the torus with zero cosmological constant, one likely would need to invoke stronger effects than dilution, such as phase mixing in compact spatial topology or dispersion for open topologies. Regarding the latter, Taylor showed in \cite{Tay24} that spherically symmetric perturbations of the FLRW solution with spatial topology $\R^3$ within the Einstein-massless Vlasov system, which exhibit power law expansion with $q=\frac12$, are stable due to dispersion.\\

Finally, we note that vector-field methods originally developed to show stability of Minkowski spacetime within the Einstein-Vlasov system, such as in \cite{LiTay20,FJS21,BFJSTh21}\footnote{Compare also to the alternative approach in \cite{Tay17}.}, can also be useful in studying the Vlasov-Poisson and Vlasov-Maxwell systems, see for example \cite{Smu16,Big25}. %which studies small data asymptotics to the Vlasov-Poisson system with vector field methods originating from the former.

\subsection{Organisation}

In Section \ref{sec:prelim+over}, we provide the necessary tools to precisely state the main result in Theorem \ref{thm:main} and to outline the remainder of the proof. This splits into a linear analysis of charge density modes in Section \ref{sec:lin}, and a bootstrap argument in Section \ref{sec:nonlin} to deal with nonlinear terms. We also provide three appendices: In Appendix \ref{sec:newt-cosmo}, we derive the expanding Vlasov-Poisson system from a classical one for general dimensions and including gravitational particle interaction, and elaborate on the relationship between \eqref{eq:perturbation-evol} and charged relativistic equations. In Appendix \ref{sec:4+}, we discuss Landau damping in high spatial dimensions and some heuristics on why this fails for $d=3$. Finally, we collect tools regarding the Liouville-Green approximation in Appendix \ref{sec:ode}, which we use in Section \ref{sec:lin}.

\subsection*{Acknowledgements} This research was funded in part by the Austrian Science Fund (FWF) projects \enquote{Matter-dominated cosmology} (10.55776/PAT7614324) and \enquote{Dynamics of matter in the decelerated epoch} (10.55776/PAT1953025). For open access purposes, the authors have applied a CC BY public copyright license to any author accepted manuscript version arising from this submission.

\section{Preliminaries, main theorem and proof overview}\label{sec:prelim+over}

In this section, after setting up notation and showing that the system \eqref{eq:perturbation-evol} is locally well-posed at the level of Gevrey regularity, we renormalise \eqref{eq:perturbation-evol} to, in particular, derive the family of Volterra equations \eqref{eq:volterra-outline} for non-zero modes of the rescaled density, which form the centerpiece of our damping analysis. We do all of this first for a general positive analytic background $\mu$ before restricting to the Poisson background when introducing Theorem \ref{thm:main} and discussing its proof.

\subsection{Notation and Fundamentals}\label{subsec:prelim}

\subsubsection{Conventions} We denote the $d$-dimensional torus with side length $L$ by $\T^d_L:=\sfrac{[0,L]}{\sim}$ and choose the convention $\T^d:=\T^d_{2\pi}$. For any $d\geq 1$, we use $\lvert\,\cdot\,\rvert$ to denote the repsective Euclidean norm on $\R,\T^d$ and $\R^d$, and use the Japanese brackets
\[\langle y\rangle=\sqrt{1+\lvert y\rvert^2},\quad \langle y,y^\prime\rangle=\sqrt{1+\lvert y\rvert^2+\lvert y^\prime\rvert^2}\,.\]
We denote the standard scalar product between elements $y,y^\prime$ of $\Z^d$ or $\R^d$ by $y_1\cdot y_2$, and will commonly write $yz$ for the scalar multiplication of $z\in\C$ with $y\in\C^d$. We also use the Einstein summation convention throughout where appropriate, with lowercase Latin indices $(i,j,\dots)$ running from $1$ to $3$. The complex conjugation of $z\in\C$ is denoted by $\overline{z}$. \\

We use the Fourier transform on $\T^d\times\R^d$ with the non-unitary convention
\begin{align*}
\mathcal{F}_{x,v}[\psi]:\,\Z^d\times\R^d\longrightarrow\C\,,\quad
\mathcal{F}_{x,v}[\psi](k,\xi)=&\,\int_{\T^d\times\R^d}\psi(x,v)\,e^{-i(k\cdot x)}\,e^{-i(\xi\cdot v)}\,dx\,dv\,,
\end{align*}
and all other Fourier transforms use analogous conventions. To ease notation, we denote Fourier transforms in $\T^d$, $\R^d$ and $\T^d\times\R^d$ by hats, for example writing
\[\hat{\psi}(t,k,\xi)=\mathcal{F}_{x,v}[\psi(t,\cdot,\cdot)](k,\xi)\,.\]
In particular, the Poisson background satisfies
\begin{equation}\label{eq:poisson-fourier}
\hat{\mu}(\xi)=\exp(-\theta_0\,\lvert\xi\rvert)\,.
\end{equation}

For functions $y_1,y_2:D\longrightarrow[0,\infty)$ for some domain $D$, we use $y_1\lesssim y_2$ to denote that there exists some uniform constant $C>0$ such that $y_1(z)\leq Cy_2(z)$ holds for all $z\in D$. Here, $C$ may depend on any parameters in this exposition with exception of the initial data size $\epsilon>0$, see \eqref{eq:ass-init}. Where the implicit constant depends on certain other parameters (e.g., $\gamma$), we may choose to denote said dependency in the subscript (e.g., $\lesssim_\gamma$). If $y_1\lesssim y_2$ and $y_2\lesssim y_1$ hold, we write $y_1\simeq y_2$.
%To avoid any ambiguity in notation, the closure of a set $Z$ with respect to a given topology will be denoted by $\text{cl}[T]$.\\

\subsubsection{Function spaces}

\begin{definition}[Gevrey norms and and generator functions]\label{def:gevrey}
Let $d\geq 1,\,\gamma\in(0,1],\,\sigma\geq 0$ as well as $z\in[0,1]$. First, for $(k,\xi)\in\Z^d\times\R^d$, we introduce the multiplier
\[A(z)_{k,\xi}=e^{z\langle k,\xi\rangle^\gamma}\langle k,\xi\rangle^\sigma\,.\]
We define the following Gevrey norms for any $\psi_1:\T^d\rightarrow\R$ and $\psi_2:\T^d\times\R^d\rightarrow\R$.

\begin{align*}
\|\psi_1\|_{\mathcal{G}^{z,\sigma,\gamma}(\T^d)}^2=&\sum_{k\in\Z^d} e^{2z\langle k\rangle^{\gamma}}\,\langle k\rangle^{2\sigma}\,\lvert \hat{\psi}_1(k)\rvert^2\,.\\
\|\psi_2\|_{\mathcal{G}^{z,\sigma,\gamma}(\T^d\times\R^d)}^2=&\,\sum_{k\in\Z^d}\int_{\R^d} (A(z)_{k,\xi})^2\,\lvert \hat{\psi}_2(k,\xi)\rvert^2\,d\xi\,.
\end{align*}

The Gevrey spaces $\mathcal{G}^{z,\sigma,\gamma}(\T^d)$ and $\mathcal{G}^{z,\sigma,\gamma}(\T^d\times\R^d)$ are the spaces of smooth functions with finite Gevrey norm, and for the case of $\sigma=0$, we simply write $\mathcal{G}^{z,\gamma}$ instead of $\mathcal{G}^{z,0,\gamma}$ in all of these instances. Additionally, for $\psi_1: (0,\infty)\times\T^d\rightarrow\R$ and $\psi_2:(0,\infty)\times\T^d\times\R^d\rightarrow\R$, and parameters $\gamma$ and $\sigma$ as in Definition \ref{def:gevrey} as well as $\alpha\in(\frac13,1]$, we define the following generator functions.

\begin{align*}
F[\psi_1](\tau,z)=&\,\sup_{k\in\Z^d\backslash\{0\}}A(z)_{k,k\tau}\,\lvert k\rvert^{-\alpha}\,\widehat{\psi_1}(\tau,k)\,,\\%=a(t)^{d}\,\sup_{k\in\Z^3}A(z)_{k,k\tau(t)}\hat{\sigma}_k(t)\lvert k\rvert^{-\alpha}\\
G[\psi_2](\tau,z)=&\,\sum_{j=0}^{d}\sum_{k\in\Z^d}\int_{\R^d} (A(z)_{k,\xi})^2\,\lvert D^j_\xi\widehat{\psi_2}(\tau,k,\xi)\rvert^2\,d\xi\,.
\end{align*}
\end{definition}

Additionally, we will make use of sliding regularity norms for $\lambda_0>0$ and $\gamma\in(\frac13,1]$ as later specified in the initial data condition in Theorem \ref{thm:main}.

\begin{definition}[Sliding regularity]\label{def:slide}
For $\delta>0$ sufficiently small and $\lambda_1\leq\lambda_0/4, \lambda_0\leq 1$ as well as, if $\gamma=1$, $\lambda_1<\frac{\theta_0}2$, we define the sliding parameter
\begin{subequations}\label{eq:slide}
\begin{equation}
z(\tau)=\lambda_1\left(1+\langle\tau\rangle^{-\delta}\right)
\end{equation}
and the corresponding generator functions
\begin{equation}
\tilde{F}_k[\psi_1](\tau)=F_k[\psi_1](\tau,z(\tau)),\quad \tilde{F}[\psi_1](\tau)=F[\psi_1](\tau,z(\tau)),\quad \tilde{G}[\psi_2](\tau)=G[\psi_2](\tau,z(\tau))\,.\end{equation}
\end{subequations}
\end{definition}

\subsubsection{Elementary estimates}

For later convenience, we collect some basic estimates that we will frequently use throughout this work. Since their proofs are entirely elementary, we omit them here.

\begin{lemma}\label{lem:algebraic} For $\gamma\in(0,1]$, $k,k^\prime\in\Z^d$ and $\xi,\xi^\prime\in\R^d$, one has
\begin{gather*}
\langle k+k^\prime,\xi+\xi^\prime\rangle^\gamma\leq \langle k,\xi\rangle^\gamma+\langle k^\prime,\xi^\prime\rangle^\gamma\,,\\
\frac{\langle k,\xi\rangle}{\langle k^\prime,\xi^\prime\rangle}\leq 2\langle k+k^\prime,\xi+\xi^\prime\rangle\,.
\end{gather*}
In particular, this implies for $\sigma>0$ that
\begin{equation}\label{eq:algebra-prop}
A(z)_{k,\xi}\leq 2^\sigma A(z)_{k-k^\prime,\xi-\xi^\prime}\,A(z)_{k^\prime,\xi^\prime}\,.
\end{equation}
\end{lemma}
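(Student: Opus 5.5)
We reduce everything to scalar inequalities for $r=\langle k+k^\prime,\xi+\xi^\prime\rangle$, $s=\langle k,\xi\rangle$, $s^\prime=\langle k^\prime,\xi^\prime\rangle$. The basic observation is that $\langle y\rangle$ is the Euclidean norm of $(1,y)\in\R^{1+2d}$. Hence $(1,k+k^\prime,\xi+\xi^\prime)=(1,k,\xi)+(0,k^\prime,\xi^\prime)$, and the triangle inequality gives
\[
r\leq s+\lvert(k^\prime,\xi^\prime)\rvert\leq s+s^\prime .
\]
For the first estimate we then use that $t\mapsto t^\gamma$ is increasing and subadditive on $[0,\infty)$ for $\gamma\in(0,1]$. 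Subadditivity follows from concavity together with $0^\gamma=0$: for $a,b>0$ one has $a^\gamma+b^\gamma\geq (a+b)^{\gamma}\big(\tfrac{a}{a+b}+\tfrac{b}{a+b}\big)=(a+b)^\gamma$, since $x^\gamma\geq x$ on $[0,1]$. Combining, $r^\gamma\leq (s+s^\prime)^\gamma\leq s^\gamma+s^{\prime\gamma}$.

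For the second estimate we apply the same triangle argument in the other direction. Writing $(1,k,\xi)=(1,k+k^\prime,\xi+\xi^\prime)-(0,k^\prime,\xi^\prime)$ gives $s\leq r+s^\prime$. Since $r,s^\prime\geq1$, we have $r+s^\prime\leq 2rs^\prime$, because $2rs^\prime-r-s^\prime=(r-1)s^\prime+(s^\prime-1)r\geq0$. Dividing by $s^\prime$ yields $s/s^\prime\leq 2r$.

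For \eqref{eq:algebra-prop}, we rename variables so that the target multiplier is $A(z)_{k,\xi}$, the first factor is evaluated at $(k-k^\prime,\xi-\xi^\prime)$, and the second at $(k^\prime,\xi^\prime)$, with $(k,\xi)=(k-k^\prime,\xi-\xi^\prime)+(k^\prime,\xi^\prime)$. Since $z\geq0$, the first inequality gives the exponential factor
\[
e^{z\langle k,\xi\rangle^\gamma}\leq e^{z\langle k-k^\prime,\xi-\xi^\prime\rangle^\gamma}e^{z\langle k^\prime,\xi^\prime\rangle^\gamma}.
\]
For the polynomial weight we use $\langle k,\xi\rangle\leq \langle k-k^\prime,\xi-\xi^\prime\rangle+\langle k^\prime,\xi^\prime\rangle\leq 2\langle k-k^\prime,\xi-\xi^\prime\rangle\langle k^\prime,\xi^\prime\rangle$, which is the first step above followed by the bound $r+s^\prime\leq 2rs^\prime$. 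Raising this to the power $\sigma>0$ gives the factor $2^\sigma$, and multiplying the two bounds gives \eqref{eq:algebra-prop}.

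None of these steps is a genuine obstacle. The only point needing care is to use the triangle inequality on the augmented vector $(1,k,\xi)$ rather than on $(k,\xi)$ alone, so that the constant $1$ is not double counted.
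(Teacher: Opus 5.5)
Your proof is correct and complete. The paper omits this proof as ``entirely elementary'', and your argument is the standard one it has in mind: the triangle inequality for the augmented vector $(1,k,\xi)\in\R^{1+2d}$, monotonicity and subadditivity of $t\mapsto t^\gamma$, the bound $r+s^\prime\leq 2rs^\prime$ for $r,s^\prime\geq 1$, and $z\geq 0$ for the exponential factor. The intermediate estimate you derive, $\langle k,\xi\rangle\leq\langle k-k^\prime,\xi-\xi^\prime\rangle+\langle k^\prime,\xi^\prime\rangle$, is also what the paper uses directly in the case distinctions of Lemma~\ref{lem:delt-G} and Lemma~\ref{lem:ckl-pw}.
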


\begin{lemma}\label{lem:tool}
%For $b_1,b_2,c>0$, the map
%\[y\mapsto \lvert y\rvert^{b_1}\,\exp(-c\,\lvert y\rvert^{b_2})\]
%assumes its maxima at $\lvert y\rvert=\left(b_1\,c^{-1}\right)^\frac1{b_2}$ with the value $(b_1\,c^{-1})^\frac{b_1}{b_2}\,\exp(-b_1)$.
For $b_1,b_2,c>0$ and $y\in\R$, one has the following uniform bound.
\begin{equation}\label{eq:tool}
\lvert y\rvert^{b_1}\,\exp(-c\,\lvert y\rvert^{b_2})\leq (b_1\,c^{-1})^\frac{b_1}{b_2}\,\exp(-b_1)\,.
\end{equation}
\end{lemma}

\subsection{Local well-posedness}\label{subsec:lwp}

To have access to sufficient regularity and a continuation criterion for solutions to \eqref{eq:perturbation-evol}, we invoke the following standard well-posedness result.

\begin{lemma}[Local well-posedness and continuation criterion]\label{lem:lwp}
Let $\mathring{g}:\T^3\times\R^3\rightarrow\R$ be an analytic initial distribution for \eqref{eq:perturbation-evol} at $t=t_0$, in particular satisfying
\[\int_{\T^3\times\R^3}\mathring{g}(x,v)\,dxdv=0\,\]
such that
\[f_0(t_0,v)+\mathring{g}(x,v)\geq 0\quad \text{for all} (x,v)\in\T^3\times\R^3\,.\] 
Further, for $I\in\N^3$, writing
\[(v^I\mathring{g})(x,v)=v^{i_1}v^{i_2}v^{i_3}\,\mathring{g}(x,v)\]
and for $\tilde{\lambda}_0>0$ as well as $\sigma\geq 0$, assume that the following holds. 
\[\sum_{I\in\N^3,\,\lvert I\rvert\leq 2}\|v^I\,\mathring{g}\|_{\mathcal{G}^{\tilde{\lambda}_0,\sigma,1}}<\infty\,.\]
Let $\tilde{\lambda}$ be a continuous decreasing function with $\tilde{\lambda}(t_0)=\tilde{\lambda}_0$ and $\lim_{t\to\infty}\tilde{\lambda}(t)>0$. Then, there exists some maximal $t^\ast\in(t_0,\infty]$ such that there exists a unique analytic solution $g$ to \eqref{eq:perturbation-evol} and that, for all $t<t^\ast$, one has
\[\sup_{\tilde{t}\in[0,t]}\sum_{I\in\N^3,\lvert I\rvert\leq 2}\|v^Ig(t,\cdot,\cdot)\|_{\mathcal{G}^{\tilde{\lambda}(\tilde{t}),\sigma,1}(\T^3\times\R^3)}<\infty\,.\]
Further, there exist some $\aleph_0>0$ such that one either has that $t^\ast=\infty$ or
\[\lim_{t\to t^\ast} G[g](t,\lambda(t))\geq\aleph_0^2\,.\]
Finally, $(f_0+g)(t,\cdot,\cdot)$ is nonnegative for all $t\in [t_0,t^\ast)$. 
%and if $\tau^\ast<\infty$, then one has
%\begin{equation}\label{eq:cont-crit}
%\limsup_{\tau\up\tau^\ast}\|\sum_{I\in\N^3,\lvert I\rvert\leq 2}\|v^Ig(T(\tau),\cdot,\cdot)\|_{H^2(\T^3\times\R^3)}=\infty
%\end{equation}
\end{lemma}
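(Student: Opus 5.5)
We prove Lemma~\ref{lem:lwp} by the standard route for analytic well-posedness of Vlasov--Poisson: a priori energy estimates in a Gevrey/analytic norm whose radius decreases in time, combined with a Picard iteration (or Galerkin approximation) in a scale of Banach spaces. First, \eqref{eq:perturbation-evol} is linear in $g$ apart from the quadratic term $a^{-1}\del_xW\cdot\del_v g$. Moreover, $W$ is obtained from $\underline{\rho}$ by inverting the Laplacian on nonzero modes, which is consistent because $\int\underline{\rho}\,dx=0$ is propagated. Integrating \eqref{eq:perturbation-evol-landau} over phase space gives $\frac{d}{dt}\int g=3\frac{\dot a}{a}\int g$, since the $v$-dilation term produces a divergence, and the force terms are $v$-divergences. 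Hence $\int g\equiv 0$ by \eqref{eq:perturbation-norm}. In Fourier variables the weight $v^I$ becomes $D^I_\xi$. Also $\underline{\rho}_k=\hat g(k,0)$ is controlled by $\sum_{|I|\le 2}\|v^I g\|$ through the Sobolev embedding $H^2_\xi\subset L^\infty_\xi$ in three dimensions. This is the reason the moments $|I|\le 2$ appear.

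Second, I would derive the a priori estimate. Set
\[E(t)=\sum_{|I|\le2}\|v^Ig(t)\|^2_{\mathcal{G}^{\tilde\lambda(t),\sigma,1}},\]
and work with a radius that decreases strictly faster than the given one, say $\tilde\lambda(t)-K\int_{t_0}^t(1+E)$ truncated appropriately. Differentiating, the time derivative of the weight $e^{2\tilde\lambda\langle k,\xi\rangle}$ yields a good term $\dot{\tilde\lambda}\,\|\langle k,\xi\rangle^{1/2}\cdot\|^2$ with a negative sign. The individual terms are then handled as follows.
\begin{itemize}
\item The free transport term $\frac{v}{a}\cdot\del_x$ becomes $\frac{k}{a}\cdot\del_\xi$ in Fourier variables. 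When commuted with $A$ and with $v^I$, it loses one derivative, which is absorbed by the good term. Alternatively, one can use the dependence of $A$ on $(k,\xi)$ along characteristics.
\item The dilation term $\frac{\dot a}{a}v\cdot\del_v$ becomes $-\frac{\dot a}{a}(\xi\cdot\del_\xi+3)$. Its commutator with $A$ is bounded by $\frac{\dot a}{a}\tilde\lambda\langle k,\xi\rangle$, again absorbable into the good term.
\item The linear source $a^{-1}\del_xW\cdot\del_vf_0$ is bounded using the analyticity of $\mu$. This requires the radius to stay below that of $f_0$, which is why $\lambda_1<\theta_0/2$ is imposed when $\gamma=1$.
\item The nonlinear term is handled by Lemma~\ref{lem:algebraic} and \eqref{eq:algebra-prop}. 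This gives a bound of the form $E^{1/2}\,\|\langle k,\xi\rangle^{1/2}A g\|^2$, controlled by the good term as long as $|\dot{\tilde\lambda}|\gtrsim 1+E^{1/2}$.
\end{itemize}
These estimates yield $\frac{d}{dt}E\lesssim (1+E)E$ on a short time interval.

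Third, I would construct the solution by iteration. Freeze the field at the previous iterate, $g^{n+1}$ solving the linear transport equation with force $\del_xW^n$. The same estimates give uniform bounds, and differences contract in a norm of slightly lower radius. This is the Cauchy--Kovalevskaya / Nirenberg--Nishida scheme. Uniqueness follows from the difference estimate. Nonnegativity of $f_0+g$ holds because $f=f_0+g$ solves the full Vlasov equation $\del_tf+\frac va\cdot\del_xf-\frac{\dot a}av\cdot\del_vf-a^{-1}\del_xW\cdot\del_vf=0$, a pure transport equation along a smooth characteristic flow. Indeed $f_0$ solves the field-free part exactly, and the total field is that of $g$ alone. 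So $f$ is constant along characteristics and stays nonnegative.

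Finally, the continuation criterion. The existence time from the a priori step depends only on a lower-regularity quantity, namely the size of $G[g]$ at the current radius together with the admissible loss of radius. Given that $\lim\tilde\lambda>0$, one can restart the argument as long as $G$ stays below a fixed threshold $\aleph_0^2$. Hence if $t^\ast<\infty$ the threshold must be reached. I expect the main obstacle to be reconciling the prescribed radius $\tilde\lambda$ with the loss-of-radius mechanism. The remedy I would attempt is to use the prescribed $\tilde\lambda$ only as an upper envelope: propagate the norm at a smaller radius and recover the stated radius by interpolation and by the finiteness of the norm at the initial radius $\tilde\lambda_0$. The $v^I$ commutators with the dilation term also require care.
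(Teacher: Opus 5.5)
\textbf{Gap: the continuation criterion at the prescribed radius.} Your local existence step (a Cauchy--Kovalevskaya/Nirenberg--Nishida iteration in a scale of analytic spaces), the propagation of zero mean, and the nonnegativity of $f_0+g$ via characteristics agree with the paper's sketch. The gap is in the step you flag yourself: finiteness of the norm at the prescribed radius $\tilde{\lambda}(t)$, and with it the continuation criterion.

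Restarting the scheme at $t_1<t^\ast$ only controls the new solution at radius $\tilde{\lambda}(t_1)-K(t-t_1)$, where $K$ is dictated by your commutator bounds. Unless $\tilde{\lambda}$ decreases at least at rate $K$, you extend an analytic solution past $t^\ast$ only in a weaker class than the one defining $t^\ast$, so maximality is not contradicted.

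Your proposed repair cannot work. Interpolating between Gevrey radii needs norms of the same function $g(t)$ at two radii bracketing the target. Finiteness at $\tilde{\lambda}_0$ is known only at time $t_0$. Moreover, the loss is genuine and independent of smallness. The free flow gives $\hat{g}(t,k,\xi)=(a(t_0)/a(t))^3\,\hat{\mathring{g}}\big(k,a(t_0)(\xi/a(t)+\tau(t)k)\big)$, which shrinks the $\xi$-radius by the factor $a(t_0)/a(t)$ and also shears it. In addition, $\del_v f_0(t,\cdot)$ only has $\xi$-radius $\theta_0/a(t)$. So already for the free flow, data of radius exactly $\tilde{\lambda}_0$ have infinite norm at radius $\tilde{\lambda}_0$ for every $t>t_0$. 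Your own estimates require roughly $-\dot{\tilde{\lambda}}\gtrsim 1+E^{1/2}$, and a radius that decreases more slowly than that cannot be recovered after the fact.

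\textbf{Missing idea.} The criterion has to be proved at the target radius itself. The radius must decrease fast enough that the good term coming from its decrease absorbs the derivative loss. In the $g$-variables the loss rate is of order one. That is incompatible with the slowly decreasing sliding radius $z(\tau)$ actually used in the bootstrap. This is why the paper applies the criterion to the profile $h$ instead. There, transport and dilation have been removed, and the only loss in \eqref{eq:G-evol-ineq} is the term $a(T(\tau))\langle\tau\rangle F[\rho]\,\del_zG$. By \eqref{eq:sob-emb-FG} its coefficient is $\lesssim a(T(\tau))\langle\tau\rangle\sqrt{G[h]}$, so on bounded intervals it is absorbed by $z'(\tau)\del_zG$, as in Corollary \ref{cor:F-to-G}, while $\tilde G[h]$ stays below a threshold.

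Such an estimate presupposes that the norm at the target radius is finite. The paper justifies it by the parabolic regularization of \cite[Lemma 5.1]{IPWW24}: uniform estimates for a viscous approximation at that radius, then passage to the limit, using that $\tilde G$ is comparable to Gevrey norms. Propagating at a smaller radius and interpolating back cannot replace this step.
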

\begin{proof}[Proof-Sketch.]
Local existence follows by applying an abstract Cauchy-Kovalevskaya theorem to \eqref{eq:perturbation-evol} as in \cite{Nir72}. The continuation criterion then follows by a standard parabolic regularisation argument as is sketched in \cite[Lemma 5.1]{IPWW24}, in particular since $\tilde{G}[g]$ is clearly bounded from above and below by suitable $(\gamma^{-1})$-Gevrey norms of $g$. Finally, nonnegativity of $f_0+g$ follows from solving the transport equation with the method of characteristics.
\end{proof}

We can assume our initial data to be analytic without loss of generality by an approximation argument. In particular, under the above result, $\hat{g}(t,\cdot,\cdot)$ twice continuously differentiable in space and momentum and all generator functions that will occur are well-defined and continuously differentiable in $t$. We will assume all of this tacitly from here onwards, and refer to initial data to \eqref{eq:perturbation-evol} as a sufficiently regular function $\mathring{g}:\T^3\times\R^3\rightarrow\R$ that has zero mean and such that $f(t_0,v)+\mathring{g}(t,x,v)$ is nonnegative. 

\subsection{Transforming the Vlasov-Poisson system}\label{subsec:transform}

To study \eqref{eq:perturbation-evol}, we first want to remove free transport effects from the analysis of the perturbation. The free transport component of \eqref{eq:perturbation-evol-landau} is given by
\begin{align}\label{eq:free-transport}
\del_tg_{free}+\frac{v^i}{a}\del_{x^i}g_{free}-\frac{\dot{a}}{a}v^i\del_{v^i}g_{free}=0\,.
\end{align}
The associated characteristic system is given by
\[\dot{X}^i(t)=\frac1{a(t)}\,V^i(t),\quad \dot{V}^i(t)=-\frac{\dot{a}(t)}{a(t)}\,V^i(t)\,.\]
Thus, free particles with initial data $(x,v)$ at $t_0>0$ follow the trajectories 
\[X(t;t_0,x,v)=x+\tau(t)\,a(t_0)\,v,\quad V(t;t_0,x,v)=\frac{a(t_0)}{a(t)}\,v\]
for
\[\tau(t)=\int_{t_0}^ta(s)^{-2}\,ds\]
and one has
\begin{align}\label{eq:free-transport-flrw}
&\,g_{free}(t,x+\tau(t)\,v,a(t)^{-1}\,v)=g_{free}(t_0,x,a(t_0)v)\\
\Longleftrightarrow&\,g_{free}(t,\tilde{x},a(t_0)^{-1}\tilde{v})=g_{free}(t_0,\tilde{x}-\tau(t)\,a(t)\,\tilde{v},a(t)\,\tilde{v})\,.
\end{align}
We define $T$ as the inverse of $t\mapsto \tau(t)$ on its range, i.e., the solution to the initial value problem
\[T^\prime(\tau)=a(T(\tau))\,\quad T(0)=t_0\,,\]
and define the renormalised distribution $h$ and density contrast $\rho$ as follows.
\begin{subequations}\label{eq:h-def}
\begin{gather}
h:\tau((0,\infty))\times\T^3\times\R^3\longrightarrow\R,\quad h(\tau,x,v):=g\left(T(\tau),x+\tau\,v,a(T(\tau))^{-1}\,v\right)\\
\rho:\tau((0,\infty))\times\T^3\longrightarrow\R,\quad \rho(\tau,x):=a(T(\tau))^{3}\underline{\rho}(T(\tau),x)\,.
\end{gather}
\end{subequations}
One computes, using $\del_\tau T=a(T(\tau))^2$, that \eqref{eq:perturbation-evol} becomes
\begin{align*}
&\,(\del_\tau h)(\tau,x,v)\\
=&\,a(T(\tau))^2\,(\del_xW)(T(\tau),x+\tau\,v)\cdot\left[(D\mu)(v)+\frac1{a(T(\tau))}(\del_vg)(T(\tau),x+\tau\,v,a(T(\tau))^{-1}\,v)\right]\\
=&\,a(T(\tau))^2\,(\del_xW)(T(\tau),x+\tau\,v)\cdot \left[(D\mu)(v)+(\del_vh)(\tau,x,v)-\tau\,\del_xh(\tau,x,v)\right]
\end{align*}
This transforms \eqref{eq:perturbation-evol} into the following system.
\begin{subequations}\label{eq:tau-sys}
\begin{align*}
\numberthis\label{eq:landau}&\left(\del_\tau h\right)(\tau,x,v)-a(T(\tau))^2\left(\del_xW\right)(T(\tau),x+\tau v)\,\cdot \,(D\mu)(v)\\
&\,+a(T(\tau))^2\,(\del_xW)(T(\tau),x+\tau v)\cdot\left(\left(\tau\del_x-\del_v\right)h\right)(\tau,x,v)=0
\end{align*}
\begin{equation}\label{eq:sigma-W-and-h}
(\Lap W)(T(\tau),x)=-4\pi\,a(T(\tau))^{-1}\,\rho(\tau,x),\quad \rho(\tau,x)=\int_{\R^3} h(\tau,x-\tau\,w,w)\,dw 
\end{equation}
\end{subequations}
Again invoking Lemma \ref{lem:lwp} tacitly from here on out, we apply the Fourier transform over $\T^3\times\R^3$ to this system. For $k\in\Z^3\backslash\{0\}$, using integration by parts and \eqref{eq:sigma-W-and-h} implies
 \begin{align*}
&a(T(\tau))^2\,\int_{\T^3\times\R^3} \left[(\del_x W)(T(\tau),x+\tau v)\,\cdot\,(D\mu)(v)\right]\,e^{-i(k\cdot x)}\,e^{-i(\xi\cdot v)}\,dx\,dv\\
=&\,a(T(\tau))^2\int_{\T^3\times\R^3}\left[(\del_{\tilde{x}}W)(T(\tau),\tilde{x})\,\cdot\,(D\mu)(v)\right]\,e^{-i(k\cdot (\tilde{x}-\tau{v}))}\,
e^{-i(\xi\cdot v)}\,d\tilde{x}\,dv\\
=&\,a(T(\tau))^2\,\frac{k}{i\lvert k\rvert^2}\cdot\int_{\T^3\times\R^3} (D\mu)(v)\,(\Lap W)(T(\tau),\tilde{x})\,e^{-i\,(k\cdot(\tilde{x}-\tau{v}))}\,e^{-i\,(\xi\cdot v)}\,d\tilde{x}\,dv\\
=&\,-4\pi\,a(T(\tau))\,\frac{k}{i\lvert k\rvert^2}\cdot\int_{\T^3\times\R^3} (D\mu)(v)\,\rho(\tau,\tilde{x})\,e^{-i(k\cdot\tilde{x})}e^{-i((\xi-k\tau)\cdot {v})}\,d\tilde{x}\,d{v}\\
=&\,-4\pi\,a(T(\tau))\hat{\rho}_k(\tau)\,\frac{k}{i\lvert k\rvert^2}\cdot \int_{\R^3} (D\mu)(v)\,e^{-i((\xi-k\tau)\cdot {v})}\,d{v}\\
=&\,-4\pi\,a(T(\tau))\frac{k\cdot (\xi -k\tau)}{\lvert k\rvert^2}\, \hat{\rho}_k(\tau)\,\hat{\mu}\left(\xi-k\tau\right)\,.
\end{align*}
Treating the second term similarly, we obtain that \eqref{eq:landau} is equivalent to
\begin{subequations}
\begin{align*}
\numberthis\label{eq:landau-fourier}\del_\tau\hat{h}(\tau,k,\xi)+4\pi a(T(\tau))\frac{k\cdot(\xi-k\tau)}{\lvert k\rvert^2}\,\hat{\rho}_k(\tau)\,\hat{\mu}(\xi-k\tau)&\,\\-4\pi a(T(\tau))\sum_{l\in\Z^3}\frac{l\cdot(\xi-k\tau)}{\lvert l\vert^2}\,\hat{\rho}_l(\tau)\,\hat{h}(\tau,k-l,\xi-l\tau)&\,=0\,.
\end{align*}
%The linearization of \eqref{eq:landau-fourier} thus reads
%\begin{gather}
%\del_\tau\hat{h}(\tau,k,\xi)-4\pi\,a(T(\tau))\,\frac{k\cdot (\xi -\tau\,k)}{\lvert k\rvert^2}\, \hat{\rho}_k(\tau)\,\hat{\mu}\left(\xi-\tau\,k\right)=0\,.
%\end{gather}
Commuting by $\del_{\xi_{i_1}}\dots\del_{\xi_{i_j}}$ for $j\in\{1,2,3\}$, which we formally denote as $D_{\xi}^j$ with $D_{\hat{j},j}$ as $\del_{\xi_{i_1}}\dots\del_{\xi_{i_j}}$ without $\del_{\xi_{\hat{j}}}$, this reads
\begin{align*}
\numberthis\label{eq:landau-fourier-commuted}0=&\,(\del_\tau D_\xi^j\hat{h})(\tau,k,\xi)+4\pi a(T(\tau))\frac{k\cdot(\xi-k\tau)}{\lvert k\rvert^2}\,\hat{\rho}_k(\tau)\,(D_\xi^j\hat{\mu})(\xi-k\tau)\\
&-4\pi a(T(\tau))\sum_{l\in\Z^3}\frac{l\cdot(\xi-k\tau)}{\lvert l\vert^2}\,\hat{\rho}_l(\tau)\,(D_\xi^j\hat{h})(\tau,k-l,\xi-l\tau)\\
&-4\pi a(T(\tau))\sum_{\hat{j}=1}^j\sum_{l\in\Z^3}\frac{l_{\hat{j}}}{\lvert l\vert^2}\,\hat{\rho}_l(\tau)\,(D_{\hat{j},j}\hat{h})(\tau,k-l,\xi-l\tau)\\
=:&\,(\del_\tau D_\xi^j\hat{h})(\tau,k,\xi)+(L)+(R)+(P)\,.
\end{align*}
\end{subequations}
Furthermore, one has
\begin{align*}
\hat{\rho}_k(\tau)=&\,\int_{\T^3\times\R^3} e^{-i(k\cdot x)}\,h(t,x-\tau w,w)\,dx\,dw\\
=&\,\int_{\T^3\times\R^3} e^{-i(k\cdot x)}\,e^{-i(k\tau\cdot w)}h(t,x,w)\,dx\,dw=\,\hat{h}(\tau,k,k\tau)
\end{align*}
Thus, after integrating in $\tau$ and inserting $\xi=k\tau$, \eqref{eq:landau-fourier} becomes
\begin{subequations}
\begin{align}\label{eq:volterra-outline}
\hat{\rho}_k(\tau)=&\,\hat{S}_k(\tau)-4\pi\int_0^\tau(\tau-\tilde{\tau})\,\hat{\mu}(k(\tau-\tilde{\tau}))\,\hat{\rho}_k(\tilde{\tau})\cdot a(T(\tilde{\tau}))\,d\tilde{\tau}
\end{align}
with $S(\tau,x)$ being defined by its spatial Fourier transform
%with, in the linearized setting,
%\[\hat{S}_k(\tau)=\hat{h}(0,k,\tau k)\]
%and, in the full setting
\begin{align}\label{eq:source-nl}
\hat{S}_k(\tau)=&\,\hat{h}(0,k,k\tau)+4\pi\int_0^{\tau}\sum_{l\in\Z^3}\frac{(l\cdot k)(\tau-\tilde{\tau})}{\lvert l\vert^2}a(T(\tilde{\tau}))\,\hat{\rho}_l(\tilde{\tau})\,\hat{h}(T(\tilde{\tau}),k-l,k\tau-l\tilde{\tau})\,d\tilde{\tau}
\end{align}
\end{subequations}
for $k\in\Z^3$. Note that, for $k=0$, the fact that $\mathring{g}$ is charge-neutral, and consequently also that $\hat{\rho}_0(\tau)\equiv 0$ holds throughout, implies $\hat{S}_0(\tau)=0$, so we focus on $k\neq 0$ from here on out.\\
We will \eqref{eq:volterra-outline} as
\begin{subequations}
\begin{equation}\label{eq:volterra}
\hat{\rho}_k(\tau)=\hat{S}_k(\tau)+\int_0^\tau K_k(\tau,\tilde{\tau})\hat{\rho}_k(\tilde{\tau})\,d\tilde{\tau}
\end{equation}
with $K_k:\R\times\R\longrightarrow\R$ given by
\begin{equation}\label{eq:def-k}
K_k(\tau,\tilde{\tau})=\begin{cases}
-4\pi\,\hat{M}_k(\tau-\tilde{\tau})\,a(T(\tilde{\tau})),\quad \hat{M}_k(s):=s\,\hat{\mu}(ks) & \tau\geq\tilde{\tau}\geq 0\\
0 & \text{else}\,.
\end{cases}
\end{equation}
\end{subequations}

Before moving on to the proof outline, it will also be convenient to consider the scale factor $a$ as a function of $\tau$. In fact, we will from now on mainly consider $a\circ T$, and assume that this function exhibits power law expansion rather than $a$ itself, along with some additional technical assumptions.

\begin{ass}[Scale factor]\label{ass:scale}
We assume $a$ to be a positive, smooth and increasing function on $[t_0,\infty)$ for some $t_0>0$, and define $T$ as the inverse of $t\mapsto \tau(t)$, i.e., via the implicit ODE
\[T^\prime(\tau)=a(T(\tau))^2,\quad T(0)=t_0\,.\]
Furthermore, we assume that there exists some $\beta\in(0,\infty)$ such that
\begin{equation}\label{eq:scale-ass}
\lvert (a\circ T)(\tau)\rvert \lesssim \langle\tau\rangle^\beta
\end{equation}
holds as well as
\begin{equation}\label{eq:lg-ass}
\int_0^{\infty} a(T(\tilde{\tau}))^{-\frac14}\,\left\lvert\left((a\circ T)^{-\frac14}\right)^{\prime\prime}(\tilde{\tau})\right\rvert\,d\tilde{\tau}<\infty\,.
\end{equation}
\end{ass}
This is slightly more general than assuming that $a(t)$ exhibits power law expansion, but fully includes this case up to the critical threshold of $q=\frac12$:

\begin{example}[The power law solution]\label{ex:power-law-trafo}
Let $a(t)=t^q$ for some $q\geq 0$. Then, one has
\begin{equation*}
\tau(t)=\begin{cases}
\frac{1}{1-2q}(t^{1-2q}-t_0^{1-2q}) & q<\frac12\\
\log(t)-\log(t_0) & q=\frac12\\
\frac{1}{2q-1}(t_0^{1-2q}-t^{1-2q}) & q>\frac12\,.
\end{cases}
\end{equation*}
as well as
\begin{equation*}
a(T(\tau))=\begin{cases}
\left((1-2q)\tau+t_0^{1-2q}\right)^{\frac{q}{1-2q}} & q<\frac12\\
\sqrt{t_0}\,\exp(\frac{\tau}2) & q=\frac12\\
\left(t_0^{1-2q}-(2q-1)\tau\right)^{\frac{q}{1-2q}} & q>\frac12,\ \tau\in\left[0,\frac{1}{2q-1}\right)\,.\\
\end{cases}
\end{equation*}
Thus, power law spacetimes clearly satisfy \eqref{eq:scale-ass} for $q\in(0,\frac12)$ with $\beta=\frac{q}{1-2q}>0$. Furthermore, one then has
\begin{align*}
a(T({\tau}))^{-\frac14}\,\left\lvert\left((a\circ T)^{-\frac14}\right)^{\prime\prime}({\tau})\right\rvert=&\,\frac{q}4\left(1-\frac74q\right)\,\left((1-2q)\tau+t_0^{1-2q}\right)^{\frac{q}{2(1-2q)}-2}
\end{align*}
which is integrable on $(0,\infty)$, implying \eqref{eq:lg-ass}. For $q=0$, all conditions are satisfied trivially for arbitrary $\beta>0$.\\

For $q>\frac12$, $\tau(t)$ has finite range. Thus, even if we were able to prove bounds as in Theorem \ref{thm:main} below, this would not imply Landau damping since $\exp(-\langle \tau(t)\rangle)$ does not converge to zero as $t\to\infty$. In other words, particles only travel a finite distance in comoving coordinates when studying the free transport equation, see \eqref{eq:free-transport-flrw}, and thus, full phase mixing cannot occur.
\end{example}

%\begin{example}\label{ex:log-trafo}
%Let $a(t)=\log(1+t)$. Then, one has
%\begin{equation*}
%\tau(t)=\int_{t_0}^t\log(1+s)^{-2}\,ds=\text{li}(t+1)-\frac{t+1}{\log(t+1)}+C_{t_0}\,,
%\end{equation*}
%where $\text{li}$ is the logarithmic integral. Note that
%\[\text{li}(t+1)-\frac{t+1}{\log(t+1)}=\O{\frac{t+1}{(\log(t+1)^2}}\quad\text{as}\quad t\to\infty\,.\]
%and thus $\frac{\tau(t)}{t}\rightarrow \frac12$ as $t\to\infty$. Consequently, for appropriate $c>0$, we heuristically have
%\[a(T(\tau))=\log(1+T(\tau))\approx\log(1+c\tau)\quad\text{as}\quad \tau\to\infty\]
%\end{example}

\subsection{Main theorem}\label{subsec:outline}

With all of the setup dealt with, we are now in a position to state the main theorem of this paper.

\begin{theorem}[Main result]\label{thm:main} 
%Let $\mu$ be real analytic and satisfy
%\begin{equation}\label{eq:distr-ass}
%\left\lvert \left(D^{\leq 2}_\xi\hat{\mu}\right)(\xi)\right\rvert\leq C_0 e^{-\theta_0\lvert \xi\rvert}\,,
%\end{equation}
%for some $C_0,\,\theta_0>0$\,. 
Assume that the scale factor satisfies Assumption \ref{ass:scale} for some $\beta>0$, let $\beta^\prime=\frac32\,\beta$, and let
%\[\gamma\in\left(1-\frac{3}{6+2\beta},1\right),\quad \sigma>\max\{4,2+\beta\}. \footnote{\todo{If we extend this to $\gamma=1$, then we would additionally need $\lambda_0<\theta_0$ there, but this eludes me currently.}}\]
\begin{equation}\label{eq:reg-main}
\gamma\in\left(1-\frac{2}{3+\beta+\beta^\prime},1\right],\quad \sigma>\max\{4,2+\beta+\beta^\prime\}\,.
\end{equation}
Let $\mathring{g}$ be initial data for the perturbed Vlasov-Poisson system with repulsive particle interaction \eqref{eq:perturbation-evol}, see Section \ref{subsec:lwp}, that has zero mean and satisfies
\begin{equation}\label{eq:ass-init}
\|\mathring{g}\|_{\mathcal{G}^{\lambda_0,\sigma,\gamma}(\T^3\times\R^3)}\leq\epsilon^2
\end{equation}
for some $\lambda_0>0$, with $\lambda_0<\theta_0$ if $\gamma=1$, as well as some sufficiently small $\epsilon>0$. Then, for $\lambda_1\in(0,0.25\lambda_0)$ and any $\lambda^\prime\in(0,\lambda_1)$, there exists some $h_\infty\in \mathcal{G}^{\lambda_1,\gamma}(\T^3\times\R^3)$ with zero mean such that one has
\begin{subequations}
\begin{align}
\|h(\tau(t),\cdot,\cdot)-h_\infty\|_{\mathcal{G}^{\lambda^\prime,\gamma}(\T^3\times\R^3)}\lesssim&\,\epsilon e^{-(\lambda_1-\lambda^\prime)\langle\tau(t)\rangle^\gamma}\label{eq:main-g}\\
\|\underline{\rho}(t,\cdot)\|_{\mathcal{G}^{\lambda^\prime,\gamma}(\T^3)}\lesssim&\,\epsilon a(t)^{-3}e^{-(\lambda_1-\lambda^\prime)\langle\tau(t)\rangle^\gamma}\label{eq:main-rho}
\end{align}
\end{subequations}
\end{theorem}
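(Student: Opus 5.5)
The argument is a bootstrap in the renormalised time $\tau$, run in the style of \cite{BMM16,GNR21}. Throughout, the sliding generator functions $\tilde F[\rho]$ and $\tilde G[h]$ of Definition \ref{def:slide} are the controlling quantities, and $z(\tau)$ decreases from $2\lambda_1$ to $\lambda_1$.

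\textbf{Bootstrap set-up.} Let $\tau^\ast$ be the supremum of times on which
\[
\tilde G[h](\tau)\le 4C_0\epsilon^2
\]
and
\[
\sup_{\tilde\tau\le\tau}\langle\tilde\tau\rangle^{b}\,\tilde F[\rho](\tilde\tau)\le 4C_0\epsilon
\]
both hold, with the weight $b$ chosen so that $b>\beta+\beta'+1$. By Lemma \ref{lem:lwp} and the smallness \eqref{eq:ass-init}, these bounds hold with constant $C_0$ on a short initial interval. The continuation criterion together with the smallness of $\epsilon$ then reduces everything to improving both constants from $4C_0$ to $2C_0$.

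\textbf{Linear step.} The first task is to treat the Volterra equation \eqref{eq:volterra} mode by mode. For the Poisson background, \eqref{eq:poisson-fourier} gives $\hat M_k(s)=s e^{-\theta_0|k|s}$. With $\tilde a=a\circ T$, the kernel equation is therefore equivalent to a linear ODE with time-dependent coefficient. Write $\hat\rho_k=y_k$. Applying $(\partial_\tau+\theta_0|k|)^2$ turns \eqref{eq:volterra} into
\[
(\partial_\tau+\theta_0|k|)^2 y_k+4\pi\,\tilde a(\tau)\,y_k=(\partial_\tau+\theta_0|k|)^2\hat S_k .
\]
The homogeneous part is a damped oscillator with frequency $\omega=\sqrt{4\pi\tilde a}$. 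I would invoke the Liouville--Green approximation from Appendix \ref{sec:ode}, which is exactly where \eqref{eq:lg-ass} enters. It shows that the fundamental solutions behave like
\[
\tilde a^{-1/4}\,e^{-\theta_0|k|\tau}\,e^{\pm i\int\omega},
\]
with errors that are uniformly controlled. Hence the resolvent kernel $R_k(\tau,\tilde\tau)$ satisfies
\[
|R_k(\tau,\tilde\tau)|\lesssim \tilde a(\tau)^{1/2}\,e^{-\theta_0|k|(\tau-\tilde\tau)}\lesssim \langle\tau\rangle^{\beta/2}\,e^{-\theta_0|k|(\tau-\tilde\tau)}.
\]
This is the replacement for the Penrose condition. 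The point is that no polynomial growth beyond $\tilde a^{1/2}$ or $\tilde a^{3/4}$ appears, and this loss is the source of $\beta'=\frac32\beta$.

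Next I would multiply by $A(z(\tau))_{k,k\tau}$ and use the pointwise inequality
\[
A_{k,k\tau}\le A_{k,k\tilde\tau}\,e^{z\langle k(\tau-\tilde\tau)\rangle^\gamma}.
\]
The exponential loss is absorbed by $e^{-\theta_0|k|(\tau-\tilde\tau)}$; this needs $\gamma<1$, or else $2\lambda_1<\theta_0$ when $\gamma=1$. The outcome is
\[
\tilde F[\rho](\tau)\lesssim \sup_{\tilde\tau\le\tau}\langle\tau\rangle^{\beta+\beta'}\,\tilde F[S](\tilde\tau),
\]
up to polynomial weights.

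\textbf{Nonlinear step.} The source $\hat S_k$ in \eqref{eq:source-nl} splits into the free part $\hat h(0,k,k\tau)$ and a bilinear term. The free part is controlled by the initial data norm. For the bilinear term I would carry out the standard reaction/transport analysis. When $l\ne k$ and $\tilde\tau\ne\tau$, the algebra property \eqref{eq:algebra-prop} and the Sobolev trace bound (a supremum in $\xi$ controlled via $G$, using $D^j_\xi$ up to order $3$) give a bound
\[
\int_0^\tau \tilde a\,\bigl|(\tau-\tilde\tau)\,k\bigr|\,\tilde F[\rho](\tilde\tau)\,\tilde G[h]^{1/2}\cdots
\]
The remaining losses are the factor $\tilde a\lesssim\langle\tilde\tau\rangle^\beta$, the growth $\langle\tau\rangle^{\beta+\beta'}$ from the linear step, and the echo resonance at $k\tau\approx l\tilde\tau$. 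The echo is handled by the gain
\[
e^{(z(\tilde\tau)-z(\tau))\langle\cdot\rangle^\gamma}\ \text{and}\ e^{-c\langle k-l,\,k\tau-l\tilde\tau\rangle^\gamma},
\]
using the Gevrey triangle inequality of Lemma \ref{lem:algebraic} with strict concavity for $\gamma<1$. Counting powers, the resonance contributes roughly
\[
\langle\tau\rangle^{(1-\gamma)(\beta+\beta'+3)}
\]
against a gain of order $\langle\tau\rangle^{2}$. This exchange is what produces the threshold
\[
\gamma>1-\frac{2}{3+\beta+\beta'},
\]
and it is where $\sigma>2+\beta+\beta'$ is used to absorb polynomial weights.

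\textbf{Energy step.} In parallel I would close the $\tilde G[h]$ bound. Differentiating $\tilde G$ in $\tau$, the term $\dot z<0$ provides a Cauchy--Kovalevskaya damping term. The linear term $(L)$ is bounded by $\tilde a\,\tilde F[\rho]\,\tilde G^{1/2}$, which is integrable by the decay weight $b$. The reaction term $(R)$ and the commutator term $(P)$ are estimated by splitting into low and high frequencies of $l$: the transport part is absorbed by $\dot z$, and the rest by $\tilde F$. Integrating in $\tau$ improves the constants to $2C_0$. Finally, $\partial_\tau h$ is integrable in $\mathcal G^{\lambda',\gamma}$, since \eqref{eq:landau-fourier} gives
\[
\|\partial_\tau h\|\lesssim\tilde a\,\tilde F[\rho]\,\tilde G^{1/2},
\]
so $h_\infty$ exists. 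The rates \eqref{eq:main-g} and \eqref{eq:main-rho} follow by trading the gap between $\lambda_1$ and $\lambda'$ into the factor $e^{-(\lambda_1-\lambda')\langle\tau\rangle^\gamma}$ and undoing the rescaling $\rho=a^3\underline\rho$.

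\textbf{Main obstacle.} I expect the hardest step to be closing the echo estimate for $\tilde F$ when the linear resolvent grows polynomially in $\tau$. The plan is to put the time-dependent weight $\langle\tau\rangle^{-\beta-\beta'}$ into the bootstrap for $\rho$ and verify, following the estimates of \cite{GNR21}, that the resonant-time convolution still closes under \eqref{eq:reg-main}.
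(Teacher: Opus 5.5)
Your plan has the same architecture as the paper's proof. Both use the renormalised mode equations \eqref{eq:volterra}, an ODE reduction specific to $\hat M_k(s)=s\,e^{-\theta_0\lvert k\rvert s}$ solved by Liouville--Green, sliding-regularity generator functions, a \cite{GNR21}-type source/echo estimate, the Cauchy--Kovalevskaya energy inequality, and scattering. Your two-quantity bootstrap, versus the paper's bootstrap on $\tilde G[h]$ alone with $\tilde F[\rho]$ recovered by absorption in Corollary \ref{cor:gevrey-imp}, is inessential.

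\textbf{The linear step.} This is the one genuine difference. The paper reduces the resolvent's own equation \eqref{eq:resolvent-volterra} to an ODE for $u_{\tilde\tau}$. It then bounds the Duhamel integral in absolute value, giving Proposition \ref{prop:res-bd}: $\lvert R_k\rvert\lesssim(\tau-\tilde\tau)^2e^{-\theta_0\lvert k\rvert(\tau-\tilde\tau)}\tilde a(\tilde\tau)\tilde a(\tau)^{1/2}$. After $\langle\tau\rangle^{\beta/2}\lesssim\langle\tau-\tilde\tau\rangle^{\beta/2}\langle\tilde\tau\rangle^{\beta/2}$, this crude bound is exactly where $\beta'=\frac32\beta$ comes from.

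Your route identifies $R_k$ as a damped Green's function. The function $w=e^{\theta_0\lvert k\rvert\tau}(\hat\rho_k-\hat S_k)$ solves $w''+4\pi\tilde a\,w=-4\pi\tilde a\,e^{\theta_0\lvert k\rvert\tau}\hat S_k$ with $w(0)=w'(0)=0$. You should make this step explicit: your displayed ODE carries $(\partial_\tau+\theta_0\lvert k\rvert)^2\hat S_k$, while the nonlinear estimates only control $\hat S_k$ itself. It gives $R_k(\tau,\tilde\tau)=-4\pi e^{-\theta_0\lvert k\rvert(\tau-\tilde\tau)}\tilde a(\tilde\tau)\,Y(\tau,\tilde\tau)$. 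Here $Y$ is the homogeneous solution with $Y=0$ and $\partial_\tau Y=1$ at $\tau=\tilde\tau$, and Liouville--Green gives $\lvert Y\rvert\lesssim\tilde a(\tilde\tau)^{-1/4}\tilde a(\tau)^{-1/4}$.

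This confirms your bound, and even improves it to $\tilde a(\tilde\tau)^{3/4}\tilde a(\tau)^{-1/4}$. It exploits the exact cancellation between $e^{\theta_0\lvert k\rvert(\tau-\tilde\tau)}K_k$ and the linearly growing part of the particular solution, which the paper does not use. The payoff is a linear loss of only $\langle\tilde\tau\rangle^{\beta/2}$. So your sentence attributing $\beta'=\frac32\beta$ to this loss is inconsistent: your linear step would allow relaxing \eqref{eq:reg-main} to $\beta'=\frac12\beta$, and the theorem as stated follows a fortiori.

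\textbf{Bookkeeping to repair.}
\begin{itemize}
\item Keep the linear estimate in the convolution form of Corollary \ref{cor:lin-damping-gevrey-concrete}. A bound by $\sup_{\tilde\tau\le\tau}\tilde F[S](\tilde\tau)$ transfers no decay from the source to $\rho$.
\item The linear loss is $\beta'$ alone. The extra $\langle\tilde\tau\rangle^{\beta}$ comes from $\tilde a(\tilde\tau)$ in \eqref{eq:source-nl}, so do not count it twice.
\item Take $b\ge 2+\beta+\delta$, not merely $b>1+\beta+\beta'$. Absorbing the $\partial_z\tilde G$ term requires $\tilde a\langle\tau\rangle\tilde F[\rho]$ to be small against $\lvert z'\rvert\sim\langle\tau\rangle^{-1-\delta}$.
\item Your echo analysis is only a power count. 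In the paper the threshold arises in Lemma \ref{lem:ckl-pw}, in the regime $\lvert l-k\rvert\le\frac12\lvert k\rvert$, $\tilde\tau\in[\frac{\tau}{2},\tau]$, $\langle k\tau-l\tilde\tau\rangle\le\frac{\tau}{4}$. There it appears as boundedness in $k$ of $\lvert k\rvert^{-2+(1-\gamma)(1+\beta+\beta')/(\gamma-\delta)}$, which is equivalent to your count. That argument uses only the sliding gain $e^{(z(\tau)-z(\tilde\tau))\langle k,k\tau\rangle^\gamma}$, not strict concavity, which is unavailable at $\gamma=1$.
\end{itemize}
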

In particular, if $\tau(t)\to \infty$ as $t\to\infty$, this implies that the density gains additional decay. As discussed in Example \ref{ex:power-law-trafo}, if one has $a(t)= C\,t^q$ for $q\in(0,\frac12)$ and $C>0$, and one chooses
\begin{equation}\label{eq:reg-max-q}
\gamma\in\left(\frac13+\frac{10q}{18-21q},1\right],\quad \sigma>\max\left\{4,2+\frac{5q}{2-4q}\right\}\,,
\end{equation}
%\begin{equation}\label{eq:reg-max-q}
%\gamma\in\left(\frac12+\frac{q}{6-10q},1\right),\quad \sigma>\max\left\{4,2+\frac{q}{1-2q}\right\}\,,
%\end{equation}
this implies that non-zero modes of the charge density decay at least by
\[t^{-3q}\,\exp\left(-c_{\beta,\gamma}\,(\lambda_1-\lambda^\prime)\,t^{\gamma(1-2q)}\right)\]
for an appropriate constant $c_{\beta,\gamma}>0$ depending only on $\beta$ and $\gamma$. By contrast, for the relativistic Vlasov equation on a fixed background, Taylor and Velozo Ruiz obtained the weaker decay rate 
\[\exp\left(-c\,t^{\frac{1-2q}{9+\gamma^{-1}}}\right)\,\]
for some $c>0$ and $(\gamma^{-1})$-regular initial data, see \cite[Remark 1.7]{TVR25}. As discussed in Section \ref{subsec:intro-rel}, this difference may occur due to complications arising from the relativistic equation, but also due to the equilibrium aiding decay of the perturbation.\\

The Gevrey regularity requirements \eqref{eq:reg-main} are necessary to control plasma echoes for large times, as in the classical setting, since the Fourier transform of $h$ sufficiently strongly to ensure summability. Since the critical nonlinearity \eqref{eq:source-nl} gains an additional factor of $a(T(\tau))$ in the expanding setting compared to the classical one, one must expect that the regularity requirements become more sharp as the rate of expansion increases.  We make no claim that our bounds are optimal in this regard -- in particular, the presence of $\beta^\prime$ comes from a loss of scaling in the linear estimates that might have room for improvement for this background, or maybe be smaller for other backgrounds. Nevertheless, \eqref{eq:reg-max-q} demonstrates that, for very slow expansion rates, the lower Gevrey regularity threshold is close to the critical value $\gamma=\frac13$ in the classical setting, while for $q$ close to the critical value $\frac12$, $\gamma$ must be chosen very close to $1$ and $\sigma$ must be chosen to be very large.  
%However, it is not clear whether this regularity is optimal, since we at minimum require $2$-Gevrey regularity for any rate of expansion. Indeed, when estimating the nonlinearities pointwise, one can loosen the requirements on $\gamma$ to
%\[\gamma\in\left(\frac13+\frac{2q}{9-15q},1\right)\,,\]
%and the stricter requirements are only needed since we need additional decay for certain coefficient terms, as outlined in Section \ref{subsec:nonlin}. Meanwhile, the analytic regime can likely be covered by taking more care in the proof of linear damping, but we refrain from doing so for the sake of simplicity, especially in light of results in the relativistic Vlasov-Poisson setting discussed below.

%In other words, for Landau damping to occur for faster expansion rates, one needs to impose increasingly strict regularity requirements on chosen initial data to still ensure damping, and said damping becomes weaker.%\footnote{[if extendible: higher order polynomial decay for $a(t)\simeq t^\frac12$, this would have to involve some parameter loss in the Gevrey-$1$-class depending on the background]}

\subsection{Proof overview and discussion}

The proof can be split into two main components. 
%First, we show that Volterra integral equations as in \eqref{eq:volterra-outline} admit a linear damping estimate for an abstract source for the Poisson background. Then, we run a bootstrap argument to show plasma echoes do not arise from the nonlinear terms in the source \eqref{eq:source-nl} if the initial satisfies the initial data assumption \eqref{eq:ass-init} with the regularity requirement \eqref{eq:reg-main}. 

\subsubsection{The linear analysis}\label{subsubsec:overview}

In Section \ref{sec:lin}, we view the source terms $(\hat{S}_k)_{k\in\Z^3\backslash\{0\}}$ in \eqref{eq:volterra-outline} as a black box and study the family of functions $(\phi_k)_{k\in\Z^3\backslash\{0\}}$ that solve the Volterra integral equations
\begin{subequations}\label{eq:volterra-abstract}
\begin{align*}
\numberthis\phi_k(\tau)=&\,s_k(\tau)+\int_0^\tau K_k(\tau,\tilde{\tau})\,\phi_k(\tilde{\tau})
\end{align*}
\end{subequations}
with $K_k$ defined as in \eqref{eq:def-k}. Solutions to this family of equations are given by
\begin{equation*}
\phi_k(\tau)=s_k(\tau)+\int_0^\tau R_k(\tau,\tilde{\tau})\,s_k(\tilde{\tau})\,d\tilde{\tau}
\end{equation*}
for some continuous resolvent function $R_k:\R\times\R\longrightarrow \C$. The main goal of this section is Proposition \ref{prop:res-bd}, which establishes the pointwise bound \eqref{eq:res-bd} on $R_k$, bounding its growth polynomially along with exponential decay in $(\tau-\tilde{\tau})$. This specific bound, when applied to \eqref{eq:volterra-outline}, then allows us to estimate $\tilde{F}[\rho]$ pointwise in terms of $\tilde{F}[S]$ by straightforward computations, see Corollary \ref{cor:lin-damping-gevrey-concrete}.\\

In absence of the scale factor $a(T(\tilde{\tau}))$, the integration kernel in \eqref{eq:volterra-abstract} would be of convolution type. Then, one could explicitly solve these equations using the Fourier-Laplace transform, which turns \eqref{eq:volterra-abstract} into an algebraic relation in the Fourier-Laplace domain, as is usually done when studying classical Landau damping, see for example \cite{MV10,BMM16,GNR21}. The presence of expansion causes significant issues in this standard approach since it breaks the convolution structure. Instead, we use that $R_k(\tau,\tilde{\tau})$ satisfies an integral equation itself that, for the Poisson background, can be reduced to a second order ordinary differential equation whose solutions we can estimate via the Liouville-Green approximation. This crucially uses that, for the Poisson equilibrium, $K_k$ can be written as a product of functions of $\tau$ and $\tilde{\tau}$, which is the main simplification it provides. While this method does not extend to general equilibria, it is possible that there are different ways of establishing resolvent bounds as in \eqref{eq:res-bd} for general, radially symmetric equilibria in light of the classic Penrose condition.

\subsubsection{The nonlinear analysis}\label{subsec:nonlin}

In Section \ref{sec:nonlin}, our goal is to use the linear estimate in Corollary \ref{cor:lin-damping-gevrey-concrete} to obtain Theorem \ref{thm:main} by estimating the nonlinearities in the source $S$. To this end, we make the following bootstrap assumption.

\begin{ass}[Bootstrap assumption]\label{ass:boot} For $\tau\in[0,\tau_{Boot})$, we assume
\begin{equation}\label{eq:boot}
\tilde{G}[h](\tau)\leq 2(1+C_0)\epsilon\,.
\end{equation}
Here, $C_0>0$ is an appropriate constant that we can specify after the fact.
\end{ass}

The local well-posedness result in Section \ref{subsec:lwp} ensures that this assumption is satisfied for some $\tau_{Boot}>0$. With Corollary \ref{cor:lin-damping-gevrey-concrete}, we pursue a bootstrap improvement strategy inspired by \cite[Section 4]{GNR21}, leading to the following result.

\begin{corollary}[Differential bound in sliding regularity]\label{cor:F-to-G-intro}
Assume that, for any $\tau\in[0,\tau_{Boot})$, one has
\begin{equation}\label{eq:F-cond-intro}
a(T(\tau))\tilde{F}[\rho](\tau)\lesssim \sqrt{\epsilon} \,\langle\tau\rangle^{-2-\delta}
\end{equation}
for some $\delta>0$. Then, for sufficiently small $\epsilon>0$ and any $\tau\in[0,\tau_{Boot})$,
\begin{subequations}
\begin {equation}\label{eq:delt-G-imp-intro}
\del_\tau\sqrt{\tilde{G}[h](\tau)}\lesssim \epsilon\langle \tau\rangle^{-2}
\end{equation}
holds as well as
\begin{equation}\label{eq:G-imp-intro}
\tilde{G}[h](\tau)\leq (1+C_0)\epsilon\,.
\end{equation}
\end{subequations}
\end{corollary}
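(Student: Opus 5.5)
The plan is an energy estimate for $\tilde{G}[h]$. I differentiate it in $\tau$ using the commuted Fourier equation \eqref{eq:landau-fourier-commuted}, and then insert the bound \eqref{eq:F-cond-intro} on $\rho$ together with the bootstrap assumption \eqref{eq:boot}.

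Write $\tilde{G}[h](\tau)=\sum_{j\le 3}\sum_k\int A(z(\tau))^2_{k,\xi}\lvert D^j_\xi\hat h\rvert^2d\xi$. Differentiating gives two kinds of contributions.
\begin{itemize}
\item The sliding weight contributes the favourable term $2\dot z(\tau)\sum\int\langle k,\xi\rangle^\gamma A^2\lvert D^j\hat h\rvert^2$. Since $\dot z=-\delta\lambda_1\tau\langle\tau\rangle^{-\delta-2}$, this term is negative and will absorb derivative losses.
\item The other contributions are $2\Re\sum\int A^2\overline{D^j\hat h}\,\partial_\tau D^j\hat h$, where $\partial_\tau D^j\hat h=-(L)-(R)-(P)$.
\end{itemize}

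For the linear term $(L)$, I use $A_{k,\xi}\le 2^\sigma A_{k,k\tau}A_{0,\xi-k\tau}$, which follows from \eqref{eq:algebra-prop}. I also use that $\lvert \xi-k\tau\rvert\,\lvert D^j\hat\mu(\xi-k\tau)\rvert A_{0,\xi-k\tau}$ is square integrable in $\xi$; this holds since $\hat\mu=e^{-\theta_0\lvert\xi\rvert}$ and $\lambda_1<\theta_0/2$ when $\gamma=1$. Then Cauchy--Schwarz gives
\[
\lvert(L)\text{-contribution}\rvert\lesssim a(T(\tau))\sqrt{\tilde G[h]}\,\Big(\sum_k \lvert k\rvert^{-2}A_{k,k\tau}^2\lvert\hat\rho_k\rvert^2\Big)^{1/2}.
\]
Summing over $k$ with the factor $\lvert k\rvert^{2\alpha-2}$ costs at most a constant once I extract $\tilde F[\rho]$; this needs a slightly weaker weight in $\sigma$, or interpolation against the $\sup$ with $\alpha\le 1$. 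So this contribution is $\lesssim a\tilde F[\rho]\sqrt{\tilde G}\lesssim\sqrt\epsilon\langle\tau\rangle^{-2-\delta}\sqrt{\tilde G}$.

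The terms $(P)$ have no $\xi$-growth. I bound them the same way by $a\tilde F[\rho]\tilde G$, which is small and integrable.

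The main obstacle is the reaction/transport term $(R)$, which carries the factor $l\cdot(\xi-k\tau)$. I split it as $l\cdot(\xi-l\tau)-l\cdot(k-l)\tau$.
\begin{itemize}
\item \emph{First piece.} The part $l\cdot(\xi-l\tau)$ has the structure of a transport term. I symmetrize it as in a commutator estimate, using $\lvert A_{k,\xi}-A_{k-l,\xi-l\tau}\rvert\lesssim \langle l,l\tau\rangle\,\langle k,\xi\rangle^{\gamma-1}$ times a product of $A$'s; this is the mean-value theorem applied to $e^{z\langle\cdot\rangle^\gamma}\langle\cdot\rangle^\sigma$. This gains $\langle k,\xi\rangle^{\gamma-1}$, which combined with the derivative $\lvert\xi\rvert$ yields one factor $\langle k,\xi\rangle^{\gamma}$ split symmetrically between the two $\hat h$ factors, i.e.\ $\langle k,\xi\rangle^{\gamma/2}\langle k-l,\xi-l\tau\rangle^{\gamma/2}$.
\item \emph{Second piece.} The part $l\cdot(k-l)\tau$ is estimated directly, with the $A$-weight moved onto $\hat\rho_l$ by \eqref{eq:algebra-prop}.
\end{itemize}
Both pieces are bounded by
\[
a\,\langle\tau\rangle^{2}\,\tilde F[\rho]\sum\int\langle k,\xi\rangle^{\gamma}A^2\lvert D^j\hat h\rvert^2
\]
plus terms of the form $a\tilde F[\rho]\tilde G$. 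The polynomial loss $\langle l\tau\rangle\lesssim\langle\tau\rangle\langle l\rangle$ is compensated by the extra Gevrey weight $A_{l,l\tau}$ carried by $\rho$. By \eqref{eq:F-cond-intro}, the coefficient of the $\langle k,\xi\rangle^\gamma$-term is $\lesssim\sqrt\epsilon\langle\tau\rangle^{-\delta}$, which for small $\epsilon$ is dominated by $\lvert\dot z\rvert\simeq\delta\lambda_1\langle\tau\rangle^{-1-\delta}$. This last step is delicate: I expect to need the exact $\langle\tau\rangle^{-2-\delta}$ rate to cancel the $\tau$ loss, and the $\langle\tau\rangle^{-1}$ mismatch must be absorbed by using only $\lvert l\tau\rvert\le\langle k-l,\xi-l\tau\rangle+\langle k,\xi\rangle$ rather than a full power of $\tau$. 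Making this balance close is the hardest part of the argument.

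Collecting the terms, I obtain $\partial_\tau\tilde G\le C\sqrt\epsilon\langle\tau\rangle^{-2-\delta}(\sqrt{\tilde G}+\tilde G)$. By the bootstrap, $\tilde G\lesssim\epsilon$, so dividing by $2\sqrt{\tilde G}$ gives \eqref{eq:delt-G-imp-intro}, up to adjusting the $\epsilon$-power by taking $\sqrt\epsilon\sqrt{\tilde G}\lesssim\epsilon$. Integrating in $\tau$ then yields $\sqrt{\tilde G(\tau)}\le\sqrt{\tilde G(0)}+C\epsilon$. Since $\tilde G(0)\lesssim\epsilon^4$ by \eqref{eq:ass-init} and $z(0)\le\lambda_0/2$, this gives $\tilde G\le(1+C_0)\epsilon$ for $\epsilon$ small, once $C_0$ is fixed depending on the implicit constants. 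This proves \eqref{eq:G-imp-intro}.
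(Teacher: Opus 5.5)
\textbf{There are two genuine gaps.}

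\emph{First gap: the $(R)$-term.} The decisive step here does not close, and you essentially say so yourself. Under \eqref{eq:F-cond-intro}, a coefficient $a(T(\tau))\langle\tau\rangle^{2}\tilde F[\rho](\tau)\lesssim\sqrt{\epsilon}\,\langle\tau\rangle^{-\delta}$ in front of $\del_zG$ can never be absorbed by $\lvert z'(\tau)\rvert\simeq\delta\lambda_1\langle\tau\rangle^{-1-\delta}$ for large $\tau$, however small $\epsilon$ is.

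The trouble is that you split $l\cdot(\xi-k\tau)=l\cdot(\xi-l\tau)-l\cdot(k-l)\tau$ and then estimate the second piece directly. After moving the weight onto $\hat\rho_l$ via \eqref{eq:algebra-prop}, you are left with $\tau\lvert k-l\rvert$ on $\hat h(\tau,k-l,\xi-l\tau)$. In the regime $\langle k-l,\xi-l\tau\rangle\geq\frac12\langle k,\xi\rangle$ this is a full derivative loss, which $\langle k,\xi\rangle^{\gamma/2}\langle k-l,\xi-l\tau\rangle^{\gamma/2}$ cannot pay for when $\gamma<1$.

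The missing idea (Lemma \ref{lem:delt-G}) is to keep the coefficient whole.
\begin{itemize}
\item $l\cdot(\xi-k\tau)$ is exactly odd under $(k,l,\xi)\mapsto(k-l,-l,\xi-l\tau)$. Together with $\hat\rho_{-l}=\overline{\hat\rho_l}$, the part carrying $A_{k-l,\xi-l\tau}$ is purely imaginary and drops out of the real part.
\item Only the commutator $A_{k,\xi}-A_{k-l,\xi-l\tau}$ survives. There one uses $\langle\xi-k\tau\rangle\lesssim\langle k-l,\xi-l\tau\rangle\langle\tau\rangle$, and this is the \emph{only} power of $\tau$ lost.
\item The mean-value factor $\langle l,l\tau\rangle$ is absorbed entirely by the polynomial part of $A_{l,l\tau}^{-1}$, since $\sigma>2$. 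No splitting $\langle l\tau\rangle\lesssim\langle\tau\rangle\langle l\rangle$ is needed.
\item The case $\langle l,l\tau\rangle\geq\frac12\langle k,\xi\rangle$ is handled by putting all the weight on $\rho$.
\end{itemize}
This gives \eqref{eq:G-evol-ineq} with a single $\langle\tau\rangle$. Then $a\langle\tau\rangle\tilde F[\rho]\lesssim\sqrt\epsilon\langle\tau\rangle^{-1-\delta}$ is dominated by $\lvert z'\rvert$ for small $\epsilon$. This is precisely why the hypothesis carries the rate $\langle\tau\rangle^{-2-\delta}$ (Corollary \ref{cor:F-to-G}).

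\emph{Second gap: the power of $\epsilon$.} Your last step does not yield the statement. From $\del_\tau\tilde G\lesssim\sqrt\epsilon\langle\tau\rangle^{-2-\delta}(\sqrt{\tilde G}+\tilde G)$, dividing by $2\sqrt{\tilde G}$ gives only $\del_\tau\sqrt{\tilde G}\lesssim\sqrt\epsilon\langle\tau\rangle^{-2-\delta}$. The bound $\sqrt\epsilon\sqrt{\tilde G}\lesssim\epsilon$ improves the quadratic term, but the linear $(L)$-contribution still leaves $\sqrt\epsilon$ after division. So \eqref{eq:delt-G-imp-intro} is not reached.

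Integrating gives $\sqrt{\tilde G}\leq\epsilon^2+C\sqrt\epsilon$. This implies \eqref{eq:G-imp-intro} only if $C^2<1+C_0$, i.e.\ only after you verify that the implicit constant in \eqref{eq:F-cond-intro} is independent of $C_0$. Otherwise, fixing $C_0$ afterwards is circular.

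The hypothesis \eqref{eq:F-cond-intro} by itself cannot produce the factor $\epsilon$. The paper obtains it as follows.
\begin{itemize}
\item It uses the improved density bound of Corollary \ref{cor:gevrey-imp}, $\tilde F[\rho](\tau)\lesssim\epsilon\langle\tau\rangle^{-\frac52-\beta}$. This is derived under \eqref{eq:boot} from the resolvent estimate (Corollary \ref{cor:lin-damping-gevrey-concrete}) and the source bound (Lemma \ref{lem:pw-est-source}).
\item It inserts this into $\del_\tau\sqrt{\tilde G[h]}\lesssim a(T(\tau))\tilde F[\rho](\tau)$ from Corollary \ref{cor:F-to-G}. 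With Assumption \ref{ass:scale}, this gives $\del_\tau\sqrt{\tilde G}\lesssim\epsilon\langle\tau\rangle^{-\frac52}$.
\item Hence $\sqrt{\tilde G}\lesssim\epsilon$, and the bootstrap closes for any fixed $C_0$ by smallness of $\epsilon$ alone.
\end{itemize}
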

Thus, if \eqref{eq:F-cond-intro} is satisfied, then this is a strict improvement of the bootstrap assumption, and by Lemma \ref{lem:lwp}, we know that there is some $\tau^\prime>\tau_{Boot}$ such that the bootstrap assumption \eqref{eq:boot} extends to $[0,\tau^\prime)$. This closes the bootstrap, and Theorem \ref{thm:main} then follows directly. We provide more details on this at the end of Section \ref{sec:nonlin}.

While adaptations are needed throughout the argument to account for the presence of expansion, the two main points causing qualitatively different behaviour compared to, in particular, \cite{GNR21} are due to changes in the relationship between $\tilde{F}[\rho]$ and $\tilde{F}[S]$ compared to the classical setting. First, the linear estimate in Corollary \ref{cor:lin-damping-gevrey-concrete} implies that there is a loss of decay by $\langle \tau\rangle^{\beta^\prime}$ when estimating the density by its source. Conversely, the nonlinearity in \eqref{eq:source-nl} behaves the most poorly for $\lvert k\rvert\approx \lvert l\rvert$, while the integral is concentrated at $k\tau\approx l\tilde{\tau}$. Thus, for the terms that impose sharp Gevrey requirements on the initial data in the classical setting, the scale factor acts approximately as an amplification by $\langle \tau\rangle^\beta$ when trying to relate $\tilde{F}[S]$ back to $\tilde{F}[\rho]$. Nevertheless, by imposing \eqref{eq:reg-main}, we can account for these amplifications in the proof of Lemma \ref{lem:ckl-pw} along with that arising from the resolvent bound, and thus to control the source on the bootstrap interval.\\

In particular, we note that this section is largely independent of the specific shape of the background equilibrium, or even whether the particle interaction is attractive or repulsive, once an estimate as in Corollary \ref{cor:lin-damping-gevrey-concrete} is obtained. While there are certainly limitations on obtaining such an estimate for attractive particle interactions in $d=3$ even for the Poisson equilibrium, as we discuss in Appendix \ref{sec:4+}, it is possible that this section can be applied to other equilibria in the repulsive setting if a similar linear estimate is obtained by other means. In this spirit, the nonlinear estimates in Section \ref{sec:nonlin} are performed without using the specific form of the background aside from when the linear damping results need to be invoked, instead only relying on the weaker regularity assumption \eqref{eq:distr-ass}.

\section{Linear damping}\label{sec:lin}

In this section, we study the Volterra equation \eqref{eq:volterra-abstract} and apply what we obtain to the special case \eqref{eq:volterra-outline} from the Vlasov-Poisson system. To this end, we use some standard statements on the Liouville-Green approximation that we have collected in Appendix \ref{sec:ode} for the sake of convenience.

\begin{prop}[Pointwise resolvent bound for the Poisson equilibrium]\label{prop:res-bd}
Let $\phi_k$ be the solution to the Volterra equation \eqref{eq:volterra}. Then, the solution to the Volterra equation can be written as
\begin{subequations}
\begin{equation}\label{eq:res-expr}
\phi_k(\tau)=s_k(\tau)+\int_0^\tau R_k(\tau,\tilde{\tau})\,s_k(\tilde{\tau})\,d\tilde{\tau}
\end{equation}
and one has
\begin{equation}\label{eq:res-bd}
\lvert R_k(\tau,\tilde{\tau})\rvert\lesssim (\tau-\tilde{\tau})^2\,\exp(-\theta_0\lvert k\rvert(\tau-\tilde{\tau}))\,a(T(\tilde{\tau}))\,a(T({\tau}))^\frac12.
\end{equation}
\end{subequations}
\end{prop}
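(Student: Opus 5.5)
The plan is to trade the Volterra equation for a second order ODE, which is possible because the Poisson kernel factorises. Fix $k\neq0$ and $\tilde\tau\ge0$. The resolvent $R_k$ in \eqref{eq:res-expr} is the unique continuous solution of the resolvent equation
\[
R_k(\tau,\tilde\tau)=K_k(\tau,\tilde\tau)+\int_{\tilde\tau}^{\tau}K_k(\tau,\sigma)\,R_k(\sigma,\tilde\tau)\,d\sigma,\qquad \tau\ge\tilde\tau .
\]
Its existence, continuity and the representation \eqref{eq:res-expr} follow from the standard Neumann series, since $K_k$ is continuous and bounded on compact sets.

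By \eqref{eq:poisson-fourier} and \eqref{eq:def-k},
\[
K_k(\tau,\sigma)=-4\pi(\tau-\sigma)\,e^{-\theta_0|k|(\tau-\sigma)}\,a(T(\sigma)).
\]
The exponential factor therefore separates, and I set
\[
r(\tau):=e^{\theta_0|k|(\tau-\tilde\tau)}R_k(\tau,\tilde\tau).
\]
The resolvent equation then becomes
\[
r(\tau)=-4\pi(\tau-\tilde\tau)\,a(T(\tilde\tau))-4\pi\int_{\tilde\tau}^{\tau}(\tau-\sigma)\,a(T(\sigma))\,r(\sigma)\,d\sigma .
\]
Differentiating twice in $\tau$ gives
\[
r''+4\pi\,(a\circ T)\,r=0,\qquad r(\tilde\tau)=0,\qquad r'(\tilde\tau)=-4\pi\,a(T(\tilde\tau)).
\]
Hence $r=-4\pi\,a(T(\tilde\tau))\,y$, where $y$ solves $y''+\omega^2y=0$ with $\omega^2=4\pi\,a\circ T$ and $y(\tilde\tau)=0$, $y'(\tilde\tau)=1$. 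The whole bound \eqref{eq:res-bd} thus reduces to the estimate
\[
|y(\tau)|\lesssim (\tau-\tilde\tau)^2\,a(T(\tau))^{1/2},
\]
uniformly in $k$ and $\tilde\tau$, because the prefactor $a(T(\tilde\tau))$ and the factor $e^{-\theta_0|k|(\tau-\tilde\tau)}$ are already exactly those in \eqref{eq:res-bd}.

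For $y$ I would use the Liouville–Green approximation from Appendix \ref{sec:ode}. With the phase
\[
\Theta(\tau)=\int_{\tilde\tau}^\tau\omega ,
\]
the two LG solutions are $\omega^{-1/2}e^{\pm i\Theta}(1+\varepsilon_\pm)$. The error control function is exactly the integral in \eqref{eq:lg-ass}, since $\omega^{-1/2}\simeq (a\circ T)^{-1/4}$ and the relevant second derivative is that of $(a\circ T)^{-1/4}$. Assumption \ref{ass:scale} therefore makes $\varepsilon_\pm$ bounded uniformly in $\tilde\tau$. Matching the initial data through the Wronskian gives
\[
y(\tau)\simeq \omega(\tilde\tau)^{-1/2}\,\omega(\tau)^{-1/2}\sin\Theta(\tau)\,(1+O(1)),
\]
and hence
\[
|y(\tau)|\lesssim a(T(\tilde\tau))^{-1/4}\,a(T(\tau))^{-1/4}\le 1
\]
for all $\tau\ge\tilde\tau$, using that $a$ is increasing and bounded below. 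In the regime $\tau-\tilde\tau\ge1$ this is already stronger than $(\tau-\tilde\tau)^2 a(T(\tau))^{1/2}$, since $a(T(\tau))^{1/2}\gtrsim1$.

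It remains to treat $\tau-\tilde\tau\le1$. Here I would not use LG at all, but a Gronwall argument on the integral form
\[
y(\tau)=(\tau-\tilde\tau)-4\pi\int_{\tilde\tau}^\tau(\tau-\sigma)\,a(T(\sigma))\,y(\sigma)\,d\sigma .
\]
This yields $|y|\lesssim(\tau-\tilde\tau)$. Iterating once more shows that the correction term is of order $(\tau-\tilde\tau)^3 a(T(\tau))$. The growth \eqref{eq:scale-ass} is what keeps all constants uniform in $\tilde\tau$.

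The main obstacle is this near-diagonal regime. The expansion above gives $R_k=K_k+O\big((\tau-\tilde\tau)^3\big)$ on the diagonal, and $K_k$ itself vanishes only to first order in $\tau-\tilde\tau$, because $\hat M_k(s)=s\hat\mu(ks)$. So matching the quadratic factor in \eqref{eq:res-bd} has to be done with care. What I would try first is to keep the diagonal term $K_k$ explicit when inserting \eqref{eq:res-expr} into the Corollary \ref{cor:lin-damping-gevrey-concrete} computations, and to bound only the remainder $R_k-K_k$, which does vanish quadratically, by the Gronwall estimate. The form in which \eqref{eq:res-bd} is then used downstream must be checked against this splitting.
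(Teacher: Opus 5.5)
Your final paragraph identifies a real defect, but it is a defect of the statement rather than of your method: \eqref{eq:res-bd} is false as $\tau\downarrow\tilde\tau$. In your variables, $y(\tilde\tau)=0$ and $y'(\tilde\tau)=1$ force $y(\tau)=(\tau-\tilde\tau)+O((\tau-\tilde\tau)^3)$. Hence $R_k(\tau,\tilde\tau)/(\tau-\tilde\tau)\to-4\pi a(T(\tilde\tau))\neq0$, whereas the right-hand side of \eqref{eq:res-bd} is $O((\tau-\tilde\tau)^2)$. No argument can deliver your reduced estimate near the diagonal.

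The paper's proof has precisely this hole. Its $u_{\tilde\tau}=e^{\theta_0\lvert k\rvert(\tau-\tilde\tau)}(K_k-R_k)$ is correctly bounded by $a(T(\tilde\tau))(\tau-\tilde\tau)\xi_{\tilde\tau}(\tau)$, where $\xi_{\tilde\tau}$ is your phase $\Theta$. But its last line absorbs $\lvert K_k\rvert=4\pi(\tau-\tilde\tau)a(T(\tilde\tau))e^{-\theta_0\lvert k\rvert(\tau-\tilde\tau)}$ into the quadratic term, which requires $\xi_{\tilde\tau}(\tau)\gtrsim1$. What that argument actually proves, using $a\circ T\geq a(t_0)>0$, is \eqref{eq:res-bd} with $(\tau-\tilde\tau)^2$ replaced by $(\tau-\tilde\tau)\langle\tau-\tilde\tau\rangle$. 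You do not need to split off $K_k$ downstream. Corollary \ref{cor:lin-damping-gevrey-concrete} only uses this factor through $(\tau-\tilde\tau)^2\leq\langle k(\tau-\tilde\tau)\rangle^2$ with $\lvert k\rvert\geq1$, and $(\tau-\tilde\tau)\langle\tau-\tilde\tau\rangle$ obeys the same bound.

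One step of your own sketch fails as written: Gronwall on $[\tilde\tau,\tilde\tau+1]$ is not uniform in $\tilde\tau$. On that window $a(T(\cdot))$ can be of size $\langle\tilde\tau\rangle^{\beta}$, and \eqref{eq:scale-ass} is an upper bound on $a$, so it cannot help. Gronwall discards the restoring sign in $y''=-4\pi(a\circ T)y$ and yields constants that are at best exponentially large in $a(T(\tilde\tau+1))^{\frac12}$. The natural near-diagonal scale is $\xi_{\tilde\tau}(\tau)\lesssim1$, not $\tau-\tilde\tau\leq1$.

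The simplest repair avoids any splitting. Since $\omega$ is nondecreasing, $\bigl(y^2+\omega^{-2}(y')^2\bigr)'=-2\omega'\omega^{-3}(y')^2\leq0$. Hence $\lvert y\rvert\leq\omega(\tilde\tau)^{-1}$ and $\lvert y'\rvert\leq\omega/\omega(\tilde\tau)$ on $[\tilde\tau,\infty)$, so that
\[
\lvert R_k(\tau,\tilde\tau)\rvert\leq\sqrt{4\pi}\,a(T(\tilde\tau))^{\frac12}\,\min\{1,\xi_{\tilde\tau}(\tau)\}\,e^{-\theta_0\lvert k\rvert(\tau-\tilde\tau)}\qquad\text{for all }\tau\geq\tilde\tau,
\]
uniformly in $k$ and $\tilde\tau$. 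As $\xi_{\tilde\tau}(\tau)\leq\sqrt{4\pi}(\tau-\tilde\tau)a(T(\tau))^{\frac12}$, this gives the corrected bound, even with a single power of $\tau-\tilde\tau$. It needs neither Liouville--Green nor \eqref{eq:lg-ass}.

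Apart from this, your route differs from the paper's and is the sharper one. You pass to the homogeneous equation $r''+4\pi(a\circ T)r=0$ for $r=e^{\theta_0\lvert k\rvert(\tau-\tilde\tau)}R_k$, with Cauchy data at $\tilde\tau$. The paper instead works with $u_{\tilde\tau}=e^{\theta_0\lvert k\rvert(\tau-\tilde\tau)}K_k-r$, which solves an inhomogeneous equation with forcing linear in $\tau-\tilde\tau$. It writes $u_{\tilde\tau}$ via Duhamel with the Liouville--Green fundamental solutions, puts absolute values inside the Duhamel integral, and finally bounds $R_k$ by $\lvert K_k\rvert+e^{-\theta_0\lvert k\rvert(\tau-\tilde\tau)}\lvert u_{\tilde\tau}\rvert$. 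This throws away the cancellation between two terms that each grow in $\tau-\tilde\tau$ while their difference $r$ stays bounded. That is what produces the factor $(\tau-\tilde\tau)\xi_{\tilde\tau}(\tau)$ in \eqref{eq:res-bd}.

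Your bound $\lvert y\rvert\lesssim a(T(\tilde\tau))^{-\frac14}a(T(\tau))^{-\frac14}$ is correct. The Liouville--Green error enters additively, as $\sin\Theta+O(1)$, not as a factor $1+O(1)$, but the upper bound stands. Either it or the amplitude bound above gives a resolvent with no growth in $\tau-\tilde\tau$ and weight at most $a(T(\tilde\tau))^{\frac12}$. Inserted into Corollary \ref{cor:lin-damping-gevrey-concrete}, this would lower the loss from $\beta'=\frac32\beta$ to $\frac12\beta$.
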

%\begin{remark}[\todo{Conjecture -- improved bound}]
%\todo{I strongly suspect one can remove $a(T({\tau}))^\frac12$ or a similar factor of this type -- this arises from a slightly brutal estimation of an oscillatory integral. However, this bound still yields a damping result, so I'll keep it for now.}
%\end{remark}
Before moving on to the proof, we will need the following technical lemma to apply the Liouville-Green approximation below.
\begin{lemma}\label{lem:liouville-green-error-bound}
Under Assumption \ref{ass:scale}, define $F_a:[0,\infty)\longrightarrow\R$ by
\[F_a(\tau)=\int_0^\tau a(T(\tilde{\tau}))^{-\frac14}\,\left((a\circ T)^{-\frac14}\right)^{\prime\prime}(\tilde{\tau})\,d\tilde{\tau}\,.\]
Then $F_a$ has finite variation on $(0,\infty)$ and, for any $b\in[0,\infty)$ its variation $\mathcal{V}_{b,\tau}[F]$ on $(b,\tau)$ converges to zero as $\tau\downarrow b$.
%\todo{Furthermore, one has
%\begin{equation}\label{eq:LG-quantitative-error-term}
%\int_b^\tau \left[\exp(\mathcal{V}_{b,\tau})-1\right]\,a(T(\tau))^\frac12\,d\tau\lesssim 1
%\end{equation}
%}
\end{lemma}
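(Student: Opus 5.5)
The plan is to reduce both claims to properties of the integrand $f(\tilde{\tau}):=a(T(\tilde{\tau}))^{-\frac14}\left((a\circ T)^{-\frac14}\right)^{\prime\prime}(\tilde{\tau})$ and then invoke \eqref{eq:lg-ass}. By Assumption \ref{ass:scale}, $a$ is smooth and positive on $[t_0,\infty)$. The map $\tau\mapsto T(\tau)$ solves $T^\prime=a(T)^2$ with $T(0)=t_0$, so it is smooth and takes values in $[t_0,\infty)$ on its interval of existence. Hence $a\circ T$ is smooth and strictly positive, so $(a\circ T)^{-\frac14}$ is smooth and $f$ is continuous on $[0,\infty)$. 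In particular, $f$ is locally integrable and $F_a(\tau)=\int_0^\tau f$ is well defined and absolutely continuous on every compact subinterval.

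Next I would use the standard fact that for an absolutely continuous function $F(\tau)=\int_0^\tau f$, the total variation on an interval $(b,c)$ equals $\int_b^c \lvert f(\tilde{\tau})\rvert\,d\tilde{\tau}$. The direction $\leq$ follows from the triangle inequality applied to each partition sum. The direction $\geq$ is not needed, since we only require upper bounds on the variation. We therefore get
\[\mathcal{V}_{b,\tau}[F_a]\leq\int_b^\tau\lvert f(\tilde{\tau})\rvert\,d\tilde{\tau}\,,\qquad \mathcal{V}_{0,\infty}[F_a]\leq\int_0^\infty \lvert f(\tilde{\tau})\rvert\,d\tilde{\tau}\,.\]
The second quantity is exactly the integral in \eqref{eq:lg-ass}, because the factor $a(T(\tilde{\tau}))^{-\frac14}$ is positive and so $\lvert f\rvert = a(T)^{-\frac14}\lvert ((a\circ T)^{-\frac14})^{\prime\prime}\rvert$. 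This integral is finite by assumption, which gives finite variation on $(0,\infty)$.

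For the second claim, fix $b\geq 0$. Since $\lvert f\rvert\in L^1(0,\infty)$, dominated convergence (or simply continuity of $f$ near $b$, giving the bound $\sup_{[b,b+1]}\lvert f\rvert\,(\tau-b)$) yields $\int_b^\tau\lvert f\rvert\to 0$ as $\tau\downarrow b$, and hence $\mathcal{V}_{b,\tau}[F_a]\to 0$.

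There is no real obstacle here. The only points needing care are checking that $T$ exists globally on the relevant $\tau$-range, so that $a\circ T$ is smooth there, and noting that $\lvert f\rvert$ coincides with the integrand in \eqref{eq:lg-ass}. For $q<\tfrac12$ in the power law case, global existence of $T$ is explicit from Example \ref{ex:power-law-trafo}. In general, I would implicitly restrict to the range of $\tau(\cdot)$, on which Assumption \ref{ass:scale} is formulated.
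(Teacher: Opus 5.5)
Your proposal is correct and follows essentially the same route as the paper: the variation of $F_a$ on $(b,\tau)$ is controlled by $\int_b^\tau \lvert F_a^\prime\rvert$, which is finite by \eqref{eq:lg-ass}, and continuity of the integrand gives $\mathcal{V}_{b,\tau}[F_a]\to 0$ as $\tau\downarrow b$. The paper writes this as an equality, but your upper bound alone suffices for both claims, and your remark on the global existence of $T$ spells out a point the paper leaves implicit.
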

\begin{proof}
Note that 
\begin{align*}
\mathcal{V}_{b,\tau}[F_a]=\int_b^\tau \lvert F_a^\prime(\tilde{\tau})\rvert\,d\tilde{\tau}=\int_b^{\tau} a(T(\tilde{\tau}))^{-\frac14}\,\left\lvert\left((a\circ T)^{-\frac14}\right)^{\prime\prime}(\tilde{\tau})\right\rvert\,d\tilde{\tau}\,.
\end{align*}
Thus, Assumption \ref{ass:scale} immediately implies that $F_a$ has finite variation, and since $F_a$ is clearly continuously differentiable,  $\lim_{\tau\downarrow b}\mathcal{V}_{b,\tau}[F_a]=0$ follows immediately.
%We compute
%\begin{align*}
%\del_\tau\langle\tau\rangle^{-\frac{\beta}4}=&\,-\frac{\beta}4\,\langle \tau\rangle^{-\frac{\beta}4-1}\,\frac{\tau}{\langle \tau\rangle}\\
%\del_\tau^2\langle\tau\rangle^{-\frac{\beta}4}=&\,-\frac{\beta}4\,\langle \tau\rangle^{-2}\left(\langle \tau\rangle^{-\frac{\beta}4}-((1+\frac{\beta}4)+1)\tau\,\langle \tau\rangle^{-\frac{\beta}4-1}\,\frac{\tau}{\langle \tau\rangle}\right)\\
%=&\,-\frac{\beta}4\,\langle \tau\rangle^{-2-\frac{\beta}4}\,\left(1-(2+\frac{\beta}4)\frac{\tau^2}{\langle\tau\rangle^2}\right)\\
%=&\,\frac{\beta}4\,\langle \tau\rangle^{-2-\frac{\beta}4}\left(1+\frac{\beta}4-\frac{2+\frac{\beta}4}{\langle \tau\rangle^2}\right)
%\end{align*}
%and thus
%\begin{align*}
%\mathcal{V}_{b,\tau}[F_a]\leq&\,\frac{\beta}4\left(1+\frac{\beta}4\right)\int_b^\tau \langle\tilde{\tau}\rangle^{-2-\frac{\beta}2}\,d\tilde{\tau}\lesssim \frac{\beta}4\,(1+\frac{\beta}4)\min\left\{\tau-b,1+\tau^{-1}\right\}\lesssim \frac{\beta}2(1+\frac{\beta}4).
%\end{align*}
\end{proof}
\begin{proof}[Proof of Proposition \ref{prop:res-bd}]
As in \cite[(1.2.9f.)]{Bru17}, the resolvent kernel $R_k:\R\times\R\longrightarrow\C$ is a continuous function satisfying the integral equation
\begin{align}
\label{eq:resolvent-volterra}R_k(\tau,\tilde{\tau})=&\,K_k(\tau,\tilde{\tau})+\int_{\tilde{\tau}}^\tau K_k({\tau},s)\,R_k(s,\tilde{\tau})\,ds\,,%\\
%=&\,K_k(\tau,\tilde{\tau})+\int_{\tilde{\tau}}^\tau R_k({\tau},v)\,K_k(v,\tilde{\tau})\,d\tilde{\tau}
\end{align}
recalling $K_k$ from \eqref{eq:def-k}. Furthermore, we restrict ourselves to $\tau\geq\tilde{\tau}\geq 0$ since $R_k$ vanishes otherwise due to the structure of $K_k$. Treating $\tilde{\tau}$ as a parameter, let us consider the auxiliary function $u_ {\tilde{\tau}}:[\tau,\infty)\longrightarrow\C$ given by
\begin{align*}
\numberthis\label{eq:def-u}u_{\tilde{\tau}}(\tau):=&\,\exp(\theta_0\lvert k\rvert(\tau-\tilde{\tau}))\left[K_k(\tau,\tilde{\tau})-R_k(\tau,\tilde{\tau})\right]=-\exp(\theta_0\lvert k\rvert(\tau-\tilde{\tau}))\int_{\tilde{\tau}}^\tau K_k({\tau},s)\,R_k(s,\tilde{\tau})\,ds\\
=&\,4\pi\int_{\tilde{\tau}}^\tau (\tau-s)\exp(\theta_0\lvert k\rvert(s-\tilde{\tau}))R_k(s,\tilde{\tau})\,a(T(s))\,ds\,.
\end{align*}
One computes that
\begin{align*}
u_{\tilde{\tau}}^\prime(\tau)=&\,\del_\tau\left[4\pi\int_{\tilde{\tau}}^\tau (\tau-s)\exp(\theta_0\lvert k\rvert(s-\tilde{\tau}))\,a(T(s))\,R_k(s,\tilde{\tau})\,ds\right]\\
=&\,4\pi\int_{\tilde{\tau}}^\tau \exp(\theta_0\lvert k\rvert(s-\tilde{\tau}))\,a(T(s))\,R_k(s,\tilde{\tau})\,ds
\end{align*}
holds and consequently, using \eqref{eq:def-u},
\begin{align*}
u_{\tilde{\tau}}^{\prime\prime}(\tau)=&\,4\pi\,\exp(\theta_0\lvert k\rvert(\tau-\tilde{\tau}))\,a(T(\tau))\,R_k(\tau,\tilde{\tau})\\
=&\,4\pi\,a(T(\tau))\,[\exp(\theta_0\lvert k\rvert(\tau-\tilde{\tau}))\,K_k(\tau,\tilde{\tau})-u_{\tilde{\tau}}(\tau)].
\end{align*}
Inserting \eqref{eq:def-k} and rearranging, this yields
\begin{align*}
u_{\tilde{\tau}}^{\prime\prime}(\tau)+4\pi\,a(T(\tau))\,u_{\tilde{\tau}}(\tau)
=&\,16\pi^2 a(T(\tau))\,a(T(\tilde{\tau}))(\tau-\tilde{\tau})\,.
\end{align*}
We can apply the Liouville-Green-approximation to this parameter-dependent second order ODE, see Corollary \ref{cor:LG-inhom}. It follows that $u_{\tilde{\tau}}(\tau)$ is of the form
\begin{align*}
\numberthis\label{eq:u-explicit} u_{\tilde{\tau}}(\tau)=&\,C^1(\tilde{\tau})\,u_{\tilde{\tau}}^{1}(\tau)+C^2(\tilde{\tau})\,u_{\tilde{\tau}}^{2}(\tau)\\
&\,+16\pi^2\int_{\tilde{\tau}}^\tau \left[u_{\tilde{\tau}}^1(s)u_{\tilde{\tau}}^2({\tau})-u_{\tilde{\tau}}^2(s)u_{\tilde{\tau}}^1({\tau})\right]\,a(T(s))\,a(T(\tilde{\tau}))(s-\tilde{\tau})\,ds
\end{align*}
where $C(\tilde{\tau})$ is a complex-valued function depending on $\tilde{\tau}$, we write
\[\xi_{\tilde{\tau}}(\tau)=\sqrt{4\pi}\,\int_{\tilde{\tau}}^\tau \,a(T(s))^\frac12\,ds\,,\]
and the fundamental solutions $u^{1,2}_{\tilde{\tau}}$ are of the form
\begin{align*}
u_{\tilde{\tau}}^1(\tau)=(4\pi)^{-\frac14}\,a(T(\tau))^{-\frac14}\left[\sin\left(\xi_{\tilde{\tau}}(\tau)\right)+\epsilon(\xi_{\tilde{\tau}}(\tau))\right]\,,\\
u_{\tilde{\tau}}^2(\tau)=(4\pi)^{-\frac14}\,a(T(\tau))^{-\frac14}\left[\cos\left(\xi_{\tilde{\tau}}(\tau)\right)+\epsilon(\xi_{\tilde{\tau}}(\tau))\right]
\end{align*}
for a given error function $\epsilon:(0,\infty)\longrightarrow\C$ as in Lemma \ref{lem:LG-hom}. In particular, by Lemma \ref{lem:liouville-green-error-bound} , $\epsilon\circ\xi_{\tilde{\tau}}$ and $(\epsilon\circ\xi_{\tilde{\tau}})^\prime$ are uniformly bounded, $(\epsilon\circ\xi_{\tilde{\tau}})(\tau)$ and $(\epsilon\circ\xi_{\tilde{\tau}})^\prime(\tau)$ converge to $0$ as $\tau\downarrow\tilde{\tau}$, and thus also 
\[u^1_{\tilde{\tau}}(\tilde{\tau})=0,\quad u^2_{\tilde{\tau}}(\tilde{\tau})=\left(4\pi a(T(\tilde{\tau}))\right)^{-\frac14}\,.\]
%Altogether, \eqref{eq:u-explicit} implies the following formula for the resolvent for $\tau\geq\tilde{\tau}\geq0$.
%\begin{align*}
%R_k(\tau,\tilde{\tau})=&\,K_k(\tau,\tilde{\tau})-\exp(-\theta_0\lvert k\rvert(\tau-\tilde{\tau}))u_{\tilde{\tau}}(\tau)\\
%=&\,-\exp(-\theta_0\lvert k\rvert\,(\tau-\tilde{\tau}))\left\{4\pi\,(\tau-\tilde{\tau})\,a(T(\tilde{\tau}))+C^1(\tilde{\tau})\,u_{\tilde{\tau}}^{1}(\tau)+C^2(\tilde{\tau})\,u_{\tilde{\tau}}^{2}(\tau)\right\}\\
%&\,+16\pi^2\,\exp(-\theta_0\lvert k\rvert(\tau-\tilde{\tau}))\int_{\tilde{\tau}}^\tau\left[u_{\tilde{\tau}}^1(s)u_{\tilde{\tau}}^2({\tau})-u_{\tilde{\tau}}^2(s)u_{\tilde{\tau}}^1({\tau})\right]\,a(T(s))\,a(T(\tilde{\tau}))(s-\tilde{\tau})\,ds\,.\numberthis\label{eq:R-explicit}
%\end{align*}
It remains to determine the functions $C^1$ and $C^2$. Note that one clearly has $u_{\tilde{\tau}}(\tilde{\tau})=0$ by definition, which immediately implies $C^2\equiv 0$. To determine $C^1$, we compute that, for any $\tau\geq\tilde{\tau}$, one has
\begin{align*}
(u_{\tilde{\tau}}^1)^\prime(\tau)=&\,-\frac14 a(T(\tau))^{-1}\,(a\circ T)^\prime(\tau)\,(u_{\tilde{\tau}}^1)(\tau)\\
&\,+(4\pi)^{-\frac14}\,a(T(\tau))^{-\frac14}\left[\cos\left(\sqrt{4\pi}\,\int_{\tilde{\tau}}^\tau a(T(s))^\frac12\,ds\right)\,\sqrt{4\pi}\,a(T(\tau))^\frac12+(\epsilon\circ \xi_{\tilde{\tau}})^\prime(\tau)\right]\,.
\end{align*}
Thus, since $u^1_{\tilde{\tau}}({\tau})$ and $(\epsilon\circ \xi_{\tilde{\tau}})^\prime({\tau})$ vanish as $\tau$ approaches $\tilde{\tau}$, we obtain
\begin{align*}
(u_{\tilde{\tau}}^1)^\prime(\tilde{\tau})=&\,(4\pi)^{\frac14}\,a(T(\tilde{\tau}))^{\frac14}.
\end{align*}
On the other hand, since $K_k(\tau,\tau)=0$, differentiating \eqref{eq:resolvent-volterra} implies
\begin{align*}
\left.\left(\del_\tau R_k(\cdot,\tilde{\tau})\right)\right|_{\tau=\tilde{\tau}}=\left.\left(\del_\tau K_k(\cdot,\tilde{\tau})\right)\right|_{\tau=\tilde{\tau}}\,,
\end{align*}
and thus also $u_{\tilde{\tau}}^\prime(\tilde{\tau})=0$. This yields
%Consequently, using $u_{\tilde{\tau}}(\tilde{\tau})=0$,
\begin{align*}
%0=&\,-\del_\tau R_k(\tilde{\tau},\tilde{\tau})+\del_\tau K_k(\tilde{\tau},\tilde{\tau})\\
%=&\del_\tau\left[\exp(-\theta_0\lvert k\rvert(\tau-\tilde{\tau}))\,u_{\tilde{\tau}}(\tau)\right]\vert_{\tau=\tilde{\tau}}\\
0=&\,u^\prime_{\tilde{\tau}}(\tilde{\tau})\\
=&\,(4\pi)^{\frac14}\,C^1(\tilde{\tau})\,a(T(\tilde{\tau}))^{\frac14}\,\\
&\,+\left[16\,\pi^2\,\exp(-\theta_0\lvert k\rvert(\tau-\tilde{\tau}))\big[u_{\tilde{\tau}}^1(\tau)u_{\tilde{\tau}}^2({\tau})-u_{\tilde{\tau}}^2(\tau)u_{\tilde{\tau}}^1({\tau})\right]\,a(T(\tau))\,a(T(\tilde{\tau}))(\tau-\tilde{\tau})\big]\big\vert_{\tau=\tilde{\tau}}\\
=&\,(4\pi)^{\frac14}\,C^1(\tilde{\tau})\,\,a(T(\tilde{\tau}))^{\frac14}
\end{align*}
and hence $C^1\equiv 0$. Finally, it remains to estimate the surviving integral expression in \eqref{eq:u-explicit}. Note that one has
\begin{align*}
&\,\left\lvert\sqrt{4\pi}\,\left[u^1_{\tilde{\tau}}(s)u^2_{\tilde{\tau}}(\tau)-u^1_{\tilde{\tau}}(\tau)u^2_{\tilde{\tau}}(s)\right]\,a(T(s))^\frac14\,a(T(\tau))^\frac14\right\rvert\\
=&\,\Big\lvert\sin(\xi_{\tilde{\tau}}(s))\cos(\xi_{\tilde{\tau}}(\tau))-\sin(\xi_{\tilde{\tau}}(\tau))\cos(\xi_{\tilde{\tau}}(s))+\epsilon(\xi_{\tilde{\tau}}(s))\left[\cos(\xi_{\tilde{\tau}}(\tau))-\sin(\xi_{\tilde{\tau}}(\tau))\right]\\
&\,-\epsilon(\xi_{\tilde{\tau}}(\tau))\left[\cos(\xi_{\tilde{\tau}}(s))-\sin(\xi_{\tilde{\tau}}(s))\right]\Big\rvert\\
%=&\,-\sin\left(\sqrt{4\pi}\int_s^\tau a(T(s^\prime))^\frac12\,ds^\prime\right)+\sqrt{2}\,\epsilon(\xi_{\tilde{\tau}}(s))\sin\left(\frac\pi4-\xi_{\tilde{\tau}}(\tau)\right)-\sqrt{2}\,\epsilon(\xi_{\tilde{\tau}}(\tau))\sin\left(\frac\pi4-\xi_{\tilde{\tau}}(s)\right)
\lesssim&\,1+\|\epsilon\circ\xi_{\tilde{\tau}}\|_{L^\infty[\tilde{\tau},\infty)}
\end{align*}
In addition, Lemma \ref{lem:liouville-green-error-bound} and Corollary \ref{cor:LG-inhom} imply
\[\|\epsilon\circ \xi_{\tilde{\tau}}\|_{L^\infty[\tilde{\tau},\infty)}\leq \sup_{\tau\geq\tilde{\tau}}\left[ \exp(\mathcal{V}_{\tilde{\tau},\tau}[F])-1\right]\leq \exp(\mathcal{V}_{0,\infty}[F])-1\]
which is uniform in $\tilde{\tau}$. Thus, one gets
\begin{align*}
&\,\left\lvert\int_{\tilde{\tau}}^\tau \left[u_{\tilde{\tau}}^1(s)u_{\tilde{\tau}}^2({\tau})-u_{\tilde{\tau}}^2(s)u_{\tilde{\tau}}^1({\tau})\right]\,a(T(s))\,a(T(\tilde{\tau}))(s-\tilde{\tau})\,ds\right\rvert\\
\lesssim&\, a(T(\tilde{\tau})) \int_{\tilde{\tau}}^\tau \left(\frac{a(T(s))}{a(T(\tau))}\right)^\frac14\,(s-\tilde{\tau})\,a(T(s))^\frac12\,ds\\
%\lesssim&\,a(T(\tilde{\tau}))a(T(\tau))^{-\frac14}\,\left\lvert\int_{\tilde{\tau}}^\tau \left[\sin\left(\sqrt{4\pi}\,\int_s^\tau a(T(s^\prime))^\frac12\,ds^\prime\right)+\sqrt{2}\,\epsilon(\xi_{\tilde{\tau}}(s))\sin\left(\frac\pi4-\xi_{\tilde{\tau}}(\tau)\right)\right.\right.\\
%&\quad\quad\quad\left.\left.+\epsilon(\xi_{\tilde{\tau}}(\tau))\sin\left(\frac\pi4-\xi_{\tilde{\tau}}(r)\right)\right]\,a(T(s))^\frac34\,(s-\tilde{\tau})\,ds\right\rvert\\
%\lesssim&\,a(T(\tilde{\tau}))(\tau-\tilde{\tau})\left[\int_{0}^{\xi_{\tilde{\tau}}(\tau)}\,\lvert\sin(\zeta)\rvert+\lvert \epsilon(\zeta)\rvert+\left\lvert \sin\left(\frac\pi4-\zeta\right)\right\rvert d\zeta\right]\\
\lesssim&\,a(T(\tilde{\tau}))(\tau-\tilde{\tau})\,\xi_{\tilde{\tau}}(\tau)\,.
\end{align*}
Altogether, recalling the definition of $u_{\tilde{\tau}}$, this implies the following estimate for any $(\tau,\tilde{\tau})$ with $\tau\geq\tilde{\tau}\geq 0$.
\begin{align*}
\lvert R_k(\tau,\tilde{\tau})\rvert\leq&\, \lvert K_k(\tau,\tilde{\tau})\rvert+\exp(-\theta_0\lvert k\rvert(\tau-\tilde{\tau}))\,u_{\tilde{\tau}}(\tau)\\
\lesssim&\,\exp(-\theta_0\lvert k\rvert(\tau-\tilde{\tau}))\left[(\tau-\tilde{\tau})\,a(T(\tilde{\tau}))+(\tau-\tilde{\tau})\,a(T(\tilde{\tau}))\,\xi_{\tilde{\tau}}(\tau)\right]\\
\lesssim&\,(\tau-\tilde{\tau})^2\,\exp(-\theta_0\lvert k\rvert(\tau-\tilde{\tau}))\,a(T(\tilde{\tau}))\,a(T(\tau))^\frac12\,.
\end{align*}

\end{proof}

Applying this resolvent bound to the original mode equations in \eqref{eq:volterra} implies the following generator function estimate.

\begin{corollary}[Linear Gevrey estimate]\label{cor:lin-damping-gevrey-concrete}
Assume the conditions of Proposition \ref{prop:res-bd} and Assumption \ref{ass:scale}, and take $\beta^\prime=\frac32\beta$. Then, there exists some parameter $\theta_1\in(0,0.25\theta_0]$ and some constant $C>0$ such that the following bounds hold for any $\tau\geq 0$, $\eta>0$ and $z\in[0,1]$ for $\gamma<1$, respectively $z\in[0,0.5\theta_0)$ for $\gamma=1$.
\begin{subequations}
\begin{align}
\label{eq:generator-bd}\langle \tau\rangle^\eta\,F[\rho](\tau,z)\leq&\,\langle \tau\rangle^\eta\,F[S](\tau,z)+C\int_0^\tau \exp(-\theta_1(\tau-\tilde{\tau}))\,\langle{\tilde{\tau}}\rangle^{\beta^\prime}\,\langle \tilde{\tau}\rangle^\eta\,F[S](\tilde{\tau},z)\,d\tilde{\tau}\,,\\
\label{eq:generator-bd-slide}\langle{\tau}\rangle^{\eta}\tilde{F}[\rho](\tau)\leq&\,\langle \tau\rangle^\eta\,\tilde{F}[S](\tau)+C\int_0^\tau \exp(-\theta_1(\tau-\tilde{\tau}))\,\langle{\tilde{\tau}}\rangle^{\beta^\prime}\,\langle \tilde{\tau}\rangle^\eta\,\tilde{F}[S](\tilde{\tau})\,d\tilde{\tau}\,.
\end{align}
\end{subequations}

\end{corollary}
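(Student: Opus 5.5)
Apply Proposition \ref{prop:res-bd} to the mode equations \eqref{eq:volterra} with $s_k=\hat{S}_k$, $\phi_k=\hat{\rho}_k$. Since $\hat{\rho}_k(\tau)=\hat{S}_k(\tau)+\int_0^\tau R_k(\tau,\tilde{\tau})\hat{S}_k(\tilde{\tau})\,d\tilde{\tau}$, we multiply by $A(z)_{k,k\tau}\lvert k\rvert^{-\alpha}\langle\tau\rangle^\eta$ and take absolute values. The first term gives $\langle\tau\rangle^\eta F[S](\tau,z)$ directly after taking the supremum in $k\neq0$. In the integral term we write
\[
A(z)_{k,k\tau}=\frac{A(z)_{k,k\tau}}{A(z)_{k,k\tilde{\tau}}}\,A(z)_{k,k\tilde{\tau}}
\]
and bound $\lvert k\rvert^{-\alpha}A(z)_{k,k\tilde{\tau}}\lvert\hat{S}_k(\tilde{\tau})\rvert\le F[S](\tilde{\tau},z)$. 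It then remains to control the scalar kernel
\[
\sup_{k\neq 0}\frac{A(z)_{k,k\tau}}{A(z)_{k,k\tilde{\tau}}}\,\lvert R_k(\tau,\tilde{\tau})\rvert\,\frac{\langle\tau\rangle^\eta}{\langle\tilde{\tau}\rangle^\eta}
\]
by $C e^{-\theta_1(\tau-\tilde{\tau})}\langle\tilde{\tau}\rangle^{\beta'}$.

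\textbf{Multiplier ratio.} By Lemma \ref{lem:algebraic} with $k'=0$, $\xi'=k(\tau-\tilde{\tau})$,
\[
\frac{A(z)_{k,k\tau}}{A(z)_{k,k\tilde{\tau}}}\le 2^\sigma \langle k(\tau-\tilde{\tau})\rangle^\sigma\, e^{z\lvert k\rvert^\gamma(\tau-\tilde{\tau})^\gamma}.
\]
For $\gamma<1$ and $z\le1$, the exponential is absorbed by $e^{-\theta_0\lvert k\rvert(\tau-\tilde{\tau})/4}$, up to a constant, using $y^\gamma\le \epsilon' y+C_{\epsilon'}$. For $\gamma=1$ with $z<\theta_0/2$ it is absorbed directly, leaving a fraction of the exponential rate. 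Polynomial factors in $\lvert k\rvert(\tau-\tilde{\tau})$ are handled by Lemma \ref{lem:tool}, again at the cost of part of the exponent. Since $\lvert k\rvert\ge1$, the remaining exponential is at most $e^{-\theta_1(\tau-\tilde{\tau})}$ with $\theta_1\le\theta_0/4$. The weight ratio $\langle\tau\rangle^\eta/\langle\tilde{\tau}\rangle^\eta\lesssim\langle\tau-\tilde{\tau}\rangle^\eta$ is absorbed the same way. If the constant is to be independent of $\eta$, one instead keeps $\langle\tau\rangle^\eta$ on the left and notes that the claimed right side only needs $\langle\tilde{\tau}\rangle^\eta$ after paying $\langle\tau-\tilde{\tau}\rangle^\eta$. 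I would accept an $\eta$-dependent $C$, or shrink $\theta_1$ accordingly.

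\textbf{Scale factors: the main obstacle.} From \eqref{eq:res-bd} the resolvent carries $a(T(\tilde{\tau}))\,a(T(\tau))^{1/2}$, which must become $\langle\tilde{\tau}\rangle^{\beta'}$ with $\beta'=\tfrac32\beta$. By \eqref{eq:scale-ass},
\[
a(T(\tilde{\tau}))\,a(T(\tau))^{1/2}\lesssim \langle\tilde{\tau}\rangle^\beta\langle\tau\rangle^{\beta/2}\lesssim \langle\tilde{\tau}\rangle^{3\beta/2}\langle\tau-\tilde{\tau}\rangle^{\beta/2},
\]
and the factor $\langle\tau-\tilde{\tau}\rangle^{\beta/2}$, together with $(\tau-\tilde{\tau})^2$, is absorbed into the exponential decay as above. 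Note that \eqref{eq:scale-ass} only gives an upper bound on $a\circ T$. This is all that is needed, since $a$ is increasing and positive, so no lower bound enters.

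\textbf{Sliding version.} The bound \eqref{eq:generator-bd-slide} follows the same way with $z=z(\tau)$. Because $z(\tau)$ is decreasing, $A(z(\tau))_{k,\xi}\le A(z(\tilde{\tau}))_{k,\xi}$ for $\tilde{\tau}\le\tau$, so the ratio estimate is only improved when $F[S](\tilde{\tau},z(\tau))$ is replaced by $\tilde{F}[S](\tilde{\tau})$. Moreover $z(\tau)\le2\lambda_1<\theta_0/2$ when $\gamma=1$, so the admissible range of $z$ is respected.
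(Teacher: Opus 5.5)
Your proof is correct and is essentially the paper's argument. Like the paper, you bound the multiplier ratio with Lemma~\ref{lem:algebraic} and convert $a(T(\tilde{\tau}))\,a(T(\tau))^{1/2}$ into $\langle\tilde{\tau}\rangle^{3\beta/2}\langle\tau-\tilde{\tau}\rangle^{\beta/2}$ via \eqref{eq:scale-ass}. You then absorb the exponential factor and all polynomial factors in $\lvert k\rvert(\tau-\tilde{\tau})$ into $e^{-\theta_0\lvert k\rvert(\tau-\tilde{\tau})}$, and treat the sliding case through monotonicity of $z$. Your variant for the sliding case, applying the fixed-$z$ bound at $z=z(\tau)$ and then using $F[S](\tilde{\tau},z(\tau))\le\tilde{F}[S](\tilde{\tau})$, is equivalent to the paper's.

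Two small remarks. First, Lemma~\ref{lem:algebraic} gives the exponent $z\langle k(\tau-\tilde{\tau})\rangle^\gamma$ rather than $z\lvert k\rvert^\gamma(\tau-\tilde{\tau})^\gamma$; this is harmless and only costs a factor $e^{z}$. Second, the constant $C$ necessarily depends on $\eta$, exactly as in the paper's proof, so you should drop your aside about an $\eta$-independent constant. A source concentrated near $\tilde{\tau}=0$ shows why: the left side carries $\langle\tau\rangle^\eta$ while the right side only carries $\langle\tilde{\tau}\rangle^\eta\approx 1$.
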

\begin{proof}
Rescaling \eqref{eq:res-expr} by $\langle\tau\rangle^\eta\,\lvert k\rvert^{-\alpha}$ for $k\in\Z^3\backslash\{0\}$, one obtains by Definition \ref{def:gevrey} that
\begin{align*}
\langle \tau\rangle^\eta\,\lvert k\rvert^{-\alpha}\,A_{k,k\tau}(z)\,\lvert\hat{\rho}_k(\tau)\rvert\leq&\,\langle \tau\rangle^\eta\,\lvert k\rvert^{-\alpha}\,A_{k,k\tau}(z)\left(\lvert\hat{S}_k(\tau)\rvert+\int_0^\tau \left\lvert R_{k}(\tau,\tilde{\tau})\right\rvert\,\left\lvert\hat{S}_k(\tilde{\tau})\right\rvert\,d\tilde{\tau}\right)\\
\leq&\,\langle \tau\rangle^\eta\,F[S](\tau,z)+\int_0^\tau\frac{\langle \tau\rangle^\eta\,A_{k,k\tau}(z)}{\langle \tilde{\tau}\rangle^\eta\,A_{k,k\tilde{\tau}}(z)}\,\left\lvert R_{k}(\tau,\tilde{\tau})\right\rvert\,\langle \tilde{\tau}\rangle^\eta\,F[S](\tilde{\tau},z)\,d\tilde{\tau}\,.
\end{align*}
Recall that, by Lemma \ref{lem:algebraic}, one has
\begin{align*}
\left\lvert\langle k,k\tau\rangle^\gamma-\langle k,k\tilde{\tau}\rangle^\gamma\right\rvert\lesssim&\,\langle k(\tau-\tilde{\tau})\rangle^\gamma\,,\\
\langle k,k\tau\rangle^\sigma\lesssim&\,\langle k,k\tilde{\tau}\rangle^\sigma\,\langle k(\tau-\tilde{\tau})\rangle^\sigma\,,\\
\langle \tau\rangle^{\eta+\frac{\beta}2}\lesssim&\,\langle \tau-\tilde{\tau}\rangle^{{\eta+\frac{\beta}2}}\langle\tilde{\tau}\rangle^{\eta+\frac{\beta}2}\,.
\end{align*}
Further using \eqref{eq:res-bd} for the resolvent term as well as Assumption \ref{ass:scale}, we compute for $\tau\geq\tilde{\tau}$:
\begin{align*}
\frac{A_{k,k\tau}(z)\,\langle \tau\rangle^\eta}{A_{k,k\tilde{\tau}}(z)\,\langle \tilde{\tau}\rangle^\eta}\,\left\lvert R_{k}(\tau,\tilde{\tau})\right\rvert\lesssim&\,\exp\left(z\left(\langle k,k\tau\rangle^\gamma-\langle k,k\tilde{\tau}\rangle^\gamma\right)-\theta_0\lvert k\rvert(\tau-\tilde{\tau})\right)\,\langle k(\tau-\tilde{\tau})\rangle^{\sigma+\eta+\frac{\beta}2+2}\,\langle\tilde{\tau}\rangle^{\frac32\beta}\\
\leq&\,\exp\left(z\langle k(\tau-\tilde{\tau})\rangle^\gamma-\theta_0\lvert k\rvert(\tau-\tilde{\tau})\right)\,\langle k(\tau-\tilde{\tau})\rangle^{\sigma+\eta+\frac{\beta}2+2}\,\langle\tilde{\tau}\rangle^{\frac32\beta}\,.
\end{align*}
Furthermore, for $\gamma<1$, the exponential term is bounded by $C_{\theta_0} \exp(-0.5\theta_0\lvert k\rvert(\tau-\tilde{\tau}))$ for some $C_{\theta_0}>0$ independent of $\lvert k\rvert$ since $y\mapsto z\langle y\rangle^\gamma-0.5\theta_0\,\lvert y\rvert$ is bounded from above on the real line. For $\gamma=1$, this also holds since $z\in[0,0.5\theta_0)$. Either way, we get
\begin{align*}
\left\lvert \frac{A_{k,k\tau}(z)\,\langle \tau\rangle^\eta}{A_{k,k\tilde{\tau}}(z)\,\langle \tilde{\tau}\rangle^\eta}\,R_{k}(\tau,\tilde{\tau})\right\rvert\lesssim&\,\exp\left(-0.5\theta_0\lvert k\rvert\,(\tau-\tilde{\tau})\right)\,\langle k(\tau-\tilde{\tau})\rangle^{\sigma+\eta+\frac{\beta}2+2}\,\langle\tilde{\tau}\rangle^{\frac32\beta}\,.
\end{align*}
Since $x\mapsto \langle x\rangle^{\sigma+\eta+\frac{\beta}2+2} \exp\left(-0.25\theta_0\,x\right)$ is uniformly bounded on the positive real line, \eqref{eq:generator-bd} now follows for $\theta_1\in(0,0.25\theta_0]$.\\
The sliding generator function bound \eqref{eq:generator-bd-slide} follows similarly, where $\tau\mapsto z(\tau)$ being decreasing implies
\begin{align*}
\exp\left(z(\tau)\langle k,k\tau\rangle^\gamma-z(\tilde{\tau})\langle k,k\tilde{\tau}\rangle^\gamma\right)\leq&\,\exp\left(z(\tilde{\tau})\left(\langle k,k\tau\rangle^\gamma-\langle k,k\tilde{\tau}\rangle^\gamma\right)\right)\\
\leq&\,\exp\left(2\lambda_1\left(\langle k,k\tau\rangle^\gamma-\langle k,k\tilde{\tau}\rangle^\gamma\right)\right)\,.
\end{align*}
In particular, note that in the analytic case, one has $z(\tau)\leq 2\lambda_1<0.5\lambda_0<0.5\theta_0$, see Definition \ref{def:slide}.
\end{proof}

\section{Nonlinear damping}\label{sec:nonlin}

In this section, we focus on providing the necessary pointwise estimates for the generator functions introduced in Definition \ref{def:slide} to improve the bootstrap assumption, see Assumption \ref{ass:boot}. While the proof follows a similar outline to \cite[Section 4]{GNR21}, we opt to provide a fully self-contained exposition here, especially to make the role of expansion as clear as possible and to track differences arising from the proof of \cite{GNR21} being written for $d=1$. To elucidate the role of regularity and expansion parameters, we also state in each step for which parameter choices each step holds and, in particular, when Assumption \ref{ass:scale} on the scale factor is used. Furthermore, we do not use the explicit form of the Poisson distribution in this section except when we need to rely on results from linear analysis in Section \ref{sec:lin}. Instead, we will only use that the background $\mu$ is smooth and that there exists some $\theta_0>0$ such that, for $j=0,\dots,3$,
\begin{equation}\label{eq:distr-ass}
\left\lvert \left(D^{j}_\xi\hat{\mu}\right)(\xi)\right\rvert\lesssim e^{-\theta_0\lvert \xi\rvert}\,.
\end{equation}
Clearly, this holds for the Poisson background itself, see \eqref{eq:poisson-fourier}\,.

\subsection{Generator function hierarchy}\label{subsec:hierarchy}

Before dealing with the nonlinear source in \eqref{eq:source-nl}, we first provide the basic differential estimate underlying our bootstrap argument.

\begin{lemma}[Differential bound on ${G[h]}$]\label{lem:delt-G} For $\gamma>0,\sigma>2$ and $\alpha\leq 1$, the following holds for any $\tau>0$ and $z\in[0,1]$.
\begin{equation}\label{eq:G-evol-ineq}
\del_\tau G[h](\tau,z)\lesssim a(T(\tau))\,F[\rho](\tau,z)\,\sqrt{G[h](\tau,z)}+a(T(\tau))\,\langle\tau\rangle\,F[\rho](\tau,z)\,\del_zG[h](\tau,z)\,.
\end{equation}
\end{lemma}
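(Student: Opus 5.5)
We compute $\del_\tau G[h](\tau,z)$ directly from the commuted Fourier equation \eqref{eq:landau-fourier-commuted}. For each $j\in\{0,\dots,3\}$ this gives
\[
\del_\tau\sum_k\int A(z)_{k,\xi}^2\lvert D^j_\xi\hat h\rvert^2\,d\xi=-2\Re\sum_k\int A(z)_{k,\xi}^2\,\overline{D^j_\xi\hat h}\,\big[(L)+(R)+(P)\big]\,d\xi .
\]
We treat the three contributions separately. In each, $\hat\rho_l(\tau)$ is bounded through $F[\rho]$ in the form
\[
\lvert\hat\rho_l(\tau)\rvert\le \lvert l\rvert^{\alpha}A(z)_{l,l\tau}^{-1}F[\rho](\tau,z),
\]
which is always compensated by the factor $\lvert l\rvert^{-2}$ from the Poisson kernel. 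Since $\alpha\le1$, we get $\lvert l\rvert^{\alpha-1}\le 1$, and the remaining $\lvert l\rvert^{-1}$ times the weight is at worst summable after Cauchy--Schwarz.

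\emph{The linear term (L).} Only the mode $k$ appears. By \eqref{eq:distr-ass}, $\lvert D^j\hat\mu(\xi-k\tau)\rvert\lvert\xi-k\tau\rvert\lesssim e^{-\theta_0'\lvert\xi-k\tau\rvert}$. Lemma \ref{lem:algebraic} gives
\[
A(z)_{k,\xi}\lesssim A(z)_{k,k\tau}\,A(z)_{0,\xi-k\tau},
\]
and the second factor is absorbed by the exponential decay when $\gamma<1$, or when $\gamma=1$ with $z$ small as in the standing assumptions. Then Cauchy--Schwarz in $(k,\xi)$ bounds this term by $a(T(\tau))F[\rho]\sqrt{G}$, using $\sum_k\lvert k\rvert^{2\alpha-2}A^2_{k,k\tau}\lvert\hat\rho_k\rvert^2\lesssim F[\rho]^2\sum_k\lvert k\rvert^{2\alpha-2}\langle k\rangle^{-2\sigma'}$-type summability. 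Here a little of the Sobolev weight is kept to sum in $k$, which is why $\sigma>2$ is assumed.

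\emph{The lower-order term (P).} This is the commutator term with $D_{\hat j,j}\hat h$, which carries no factor $(\xi-k\tau)$. Split
\[
A(z)_{k,\xi}\le 2^\sigma A(z)_{k-l,\xi-l\tau}A(z)_{l,l\tau}
\]
using \eqref{eq:algebra-prop}. The factor $A_{l,l\tau}$ cancels against the bound for $\hat\rho_l$, leaving $\lvert l\rvert^{\alpha-1}\le1$ times a convolution in $l$. Young's inequality (with $\sum_l\lvert l\rvert^{\alpha-1}\langle l\rangle^{-\sigma}<\infty$ after borrowing Sobolev weight) then bounds this term by $a(T(\tau))F[\rho]G$. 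Since $G\lesssim\sqrt G$ on the relevant (bounded) range, or more precisely since we can keep one factor as $\sqrt G$ and the other as $\sqrt G$ bounded by the bootstrap, this is absorbed in the first term of \eqref{eq:G-evol-ineq}. If one prefers not to invoke smallness, it can equally be placed in the second term below, since $\del_zG\ge 0$ dominates $G$ up to constants for frequencies of size at least one.

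\emph{The transport-type term (R).} This is the main obstacle, because of the factor $l\cdot(\xi-k\tau)$, which loses one derivative. We write
\[
\xi-k\tau=(\xi-l\tau)-(k-l)\tau,
\]
so $\lvert\xi-k\tau\rvert\lesssim\langle\tau\rangle\langle k-l,\xi-l\tau\rangle$. We then use the standard Gevrey trick: symmetrize the top-order part in the usual way, exploiting that the exact transport term is skew. The remaining commutator is estimated by
\[
\lvert A(z)_{k,\xi}-A(z)_{k-l,\xi-l\tau}\rvert\lesssim A\text{-weights}\times\langle l,l\tau\rangle\,\langle k,\xi\rangle^{\gamma-1}\cdots,
\]
which yields, for the high frequency of $h$, the factor
\[
\langle k,\xi\rangle^{\gamma/2}\langle k-l,\xi-l\tau\rangle^{\gamma/2}.
\]
Since
\[
\del_zG=2\sum_{j,k}\int\langle k,\xi\rangle^\gamma A^2\lvert D^j\hat h\rvert^2,
\]
this is bounded by $a(T(\tau))\langle\tau\rangle F[\rho]\,\del_zG$. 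The low-frequency piece for $h$, where $\rho_l$ carries the high frequency, goes into the $\sqrt G$ term as in (L).

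Collecting the three contributions gives \eqref{eq:G-evol-ineq}. The delicate point is organizing the commutator in (R) so that exactly one power of $\langle\tau\rangle$ and $\langle\cdot\rangle^{\gamma}$ appear. If needed, we would fall back on the cruder bound $\lvert\xi-k\tau\rvert\le\langle\tau\rangle\langle k-l,\xi-l\tau\rangle\langle l\rangle$ and use $\gamma\le1$ via $\langle\cdot\rangle^{1}\le\langle\cdot\rangle^{\gamma}\cdot\langle\cdot\rangle^{1-\gamma}$. In that case the loss would have to be absorbed into the $\sigma$-weight, possible since $\sigma>2$.
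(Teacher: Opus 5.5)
Your outline is the paper's proof. You use the energy identity from \eqref{eq:landau-fourier-commuted}. For $(L)$ you use $A_{k,\xi}\le 2^\sigma A_{k,k\tau}A_{0,\xi-k\tau}$ together with the decay of $\hat\mu$; your caveat that $\gamma=1$ needs $z<\theta_0$ there is correct. For $(R)$ you use the skew cancellation, followed by the commutator $A_{k,\xi}-A_{k-l,\xi-l\tau}$ split according to which of $\langle l,l\tau\rangle$, $\langle k-l,\xi-l\tau\rangle$ exceeds $\frac12\langle k,\xi\rangle$. In the paper the cancellation comes from the relabeling $(k,l,\xi)\mapsto(k-l,-l,\xi-l\tau)$, which preserves $\xi-k\tau$, plus $\hat\rho_{-l}=\overline{\hat\rho_l}$. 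For $(P)$ you use the algebra property with a borrowed $\langle\cdot\rangle^{-\sigma}$ and Young.

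The step that fails as written is where you put the cubic terms. The reaction piece of $(R)$, where $\rho_l$ carries the high frequency, is trilinear ($\rho\cdot h\cdot h$). It is therefore bounded by $a(T(\tau))\langle\tau\rangle F[\rho]\,G[h]$, not by $a(T(\tau))F[\rho]\sqrt{G[h]}$ ``as in $(L)$''. The $\langle\tau\rangle$ comes from $\lvert\xi-k\tau\rvert\le\lvert\xi-l\tau\rvert+\lvert k-l\rvert\tau$, with the remaining $\langle k-l,\xi-l\tau\rangle$ absorbed by the $\sigma$-weight of the low-frequency factor. $(L)$ yields $\sqrt G$ only because it has a single $h$ besides $\rho$. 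Even granting smallness of $G$, the factor $\langle\tau\rangle$ would not fit into the first term of \eqref{eq:G-evol-ineq}.

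Likewise, appealing to the bootstrap for $(P)$ is not admissible. The lemma is unconditional for every $z\in[0,1]$, while Assumption \ref{ass:boot} only controls $\tilde G$ at $z=z(\tau)$. The correct bookkeeping is the one you mention in passing for $(P)$. Since $\langle k,\xi\rangle^\gamma\ge 1$, one has $G[h]\le\frac12\del_zG[h]$. Hence every cubic contribution — both cases of the commutator, and $(P)$ — goes into $a(T(\tau))\langle\tau\rangle F[\rho]\del_zG[h]$, which is exactly how the paper concludes.

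Your fallback in the last paragraph is not viable for $\gamma<1$. The gain $\langle l,l\tau\rangle\langle k,\xi\rangle^{\gamma-1}$ extracted from $A_{k,\xi}-A_{k-l,\xi-l\tau}$ after the cancellation is essential. Without it, the transport regime costs $\langle\tau\rangle\langle k,\xi\rangle$ on two comparable high-frequency $h$-factors, of which $\del_zG$ recovers only $\langle k,\xi\rangle^\gamma$. The leftover $\langle k,\xi\rangle^{1-\gamma}$ cannot be absorbed by $\sigma$, since both factors already carry their full weight $A$ in $G$.

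Finally, the ``little Sobolev weight kept to sum in $k$'' in $(L)$ is not available. After $A_{k,\xi}\le 2^\sigma A_{k,k\tau}A_{0,\xi-k\tau}$, both copies of $A_{k,\xi}$ are spent. Since $F[\rho]$ is a supremum, you are left with $F[\rho]\sum_{k}\lvert k\rvert^{\alpha-1}\|A_{k,\cdot}D^j_\xi\hat h(k,\cdot)\|_{L^2}$. This is not bounded by $\sqrt{G[h]}$ on $\Z^3$, because $\sum_{k\neq 0}\lvert k\rvert^{2\alpha-2}=\infty$. The paper's $(L)$ display passes over this $k$-sum in the same way, so this is not a departure from the paper. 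Still, $\sigma>2$ does not justify it: an extra decaying weight in $k$ is needed, and neither $F$ nor $G$ as defined supplies one.
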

\begin{proof}[Proof-Sketch following {\cite[Proposition 4.1]{GNR21}}] 
Throughout this proof, we will suppress the dependency of the Fourier multiplier $A(z)$ on $z$ in notation. Furthermore, recall that since $g$ is charge-neutral, we have $\hat{\rho}_0\equiv 0$ which we will use to drop any terms with zero density modes without further comment.\\

Notice that one has
\begin{equation}\label{eq:delt-G}
\del_\tau G[h](\tau,\cdot)\leq 2\,\Re\left\{\sum_{j=0}^{3}\sum_{k\in\Z^3\backslash\{0\}}\int_{\R^3}\left(A_{k,\xi}\,(D_\xi^j\del_\tau\hat{h})(\tau,k,\xi)\right)\left(\overline{A_{k,\xi}\,(D_\xi^j\hat{h})(\tau,k,\xi)}\right)\,d\xi\right\},
\end{equation}
so we insert \eqref{eq:landau-fourier-commuted} for $j=0,\dots,3$ and treat contributions from $(L)$, $(R)$ and $(P)$ therein separately.\\

\noindent Using \eqref{eq:distr-ass} as well as, by Lemma \ref{lem:algebraic},
\[A_{k,\xi}\leq 2^\sigma\,A_{k,k\tau}\,\langle \xi-k\tau\rangle^\sigma\,,\]
one gets that contributions from $(L)$ are bounded by
\begin{align*}
\lesssim&\,a(T(\tau))\sum_{j=0}^3\sum_{k\in\Z^3\backslash\{0\}}\int_{\R^3}\lvert k\rvert^{-\alpha}\,\left\lvert A_{k,k\tau}\,\hat{\rho}_k(\tau)\right\rvert\,\left\lvert A_{k,\xi}\,(D_\xi^j\hat{h})(\tau,k,\xi))\right\rvert\,\cdot\\
&\,\qquad\qquad\qquad\qquad\qquad\cdot\left(\frac{\lvert\xi-k\tau\rvert}2^\sigma\,e^{-\theta_0\lvert \xi-k\tau\rvert}\,\langle \xi-k\tau\rangle^\sigma\,\lvert k\rvert^{-1+\alpha}\right)\,d\xi\\
\lesssim&\,a(T(\tau))\,F[\rho](\tau,\cdot)\,\sqrt{G[h](\tau,\cdot)}\,.
\end{align*}
Considering the contribution of $(R)$ to \eqref{eq:delt-G}, this takes the form
\begin{align*}
\numberthis\label{eq:contr-R-deltG}&\,8\pi \lvert k\rvert^{-\alpha}\,\Re\left\{\sum_{j=0}^3\sum_{k,l\in\Z^3\backslash\{0\}}\int_{\R^3} a(T(\tau))\,\frac{l\cdot(\xi-k\tau)}{\lvert l\rvert^2}\,\hat{\rho_l}(\tau)\,A_{k,\xi}\,(D_\xi^j\hat{h})(\tau,k-l,\xi-l\tau)\right.\\
&\,\qquad\qquad\qquad\cdot\left(\overline{A_{k,\xi}\,(D_\xi^j\hat{h})(\tau,k,\xi)}\right)\,d\xi\Biggr\}\\
=&\,8\pi\lvert k\rvert^{-\alpha}\,\Re\left\{\sum_{j=0}^3\sum_{k,l\in\Z^3\backslash\{0\}}\int_{\R^3}a(T(\tau))\,\left(\frac{l\cdot(\xi-k\tau)}{\lvert l\rvert^2}\,\hat{\rho_l}(\tau)\right)\cdot\left(A_{k-l,\xi-l\tau}\,(D_\xi^j\hat{h})(\tau,k-l,\xi-l\tau)\right)\cdot\right. \\
&\,\qquad\qquad\qquad\cdot\left(\overline{A_{k,\xi}\,(D_\xi^j\hat{h})(\tau,k,\xi)}\right)\,d\xi\Biggr\}\\
&\,+8\pi\lvert k\rvert^{-\alpha}\,\Re\left\{\sum_{j=0}^3\sum_{k,l\in\Z^3\backslash\{0\}}\int_{\R^3} a(T(\tau))\,A_{l,l\tau}\hat{\rho_l}(\tau)\,\frac{k\cdot(\xi-k\tau)}{\lvert k\rvert^2}\,\frac{A_{k,\xi}-A_{k-l,\xi-l\tau}}{A_{l,l\tau}\,A_{k-l,\xi-l\tau}}\,\cdot\right.\\
&\,\qquad\qquad\qquad\qquad\qquad \cdot\left(A_{k-l,\xi-l\tau}(D_\xi^j\hat{h})(\tau,k-l,\xi-l\tau)\right)\left(\overline{A_{k,\xi}\left(D_\xi^j\hat{h}\right)(\tau,k,\xi)}\right)\,d\xi\Biggr\}\,.
\end{align*}
The first term on the right hand side vanishes since, relabeling 
\[\tilde{k}=k-l,\ \tilde{l}=-l,\ \tilde{\xi}=\xi-l\tau\]
and using $\hat{\rho}_{{l}}=\hat{\rho}_{-\tilde{l}}=\overline{\hat{\rho}_{\tilde{l}}}$, one sees that the series expression is equal to
\begin{align*}
-\sum_{j=0}^3\sum_{\tilde{k},\tilde{l}\in\Z^3\backslash\{0\}}\int_{\R^3}&\, a(T(\tau))\,\left(\frac{\tilde{l}\cdot (\tilde{\xi}-\tilde{k}\tau)}{\lvert \tilde{l}\rvert^2}\,\overline{\hat{\rho}_{\tilde{l}}}\right)\left(A_{\tilde{k},\tilde{\xi}}\,(D_\xi^j\hat{h})(\tau,\tilde{k},\tilde{\xi})\right)\cdot\\
&\,\cdot\left(\overline{A_{\tilde{k}-\tilde{l},\tilde{\xi}-\tilde{l}\tau}\,(D^j_\xi\hat{h})(\tau,\tilde{k}-\tilde{l},\tilde{\xi}-\tilde{l}\tau)}\right)\,d\tilde{\xi}
\end{align*}
and thus purely imaginary. To control the remaining term, we first estimate the weight
\[\lvert \xi-k\tau\rvert\frac{\lvert A_{k,\xi}-A_{k-l,\xi-l\tau}\rvert}{A_{l,l\tau}\,A_{k-l,\xi-l\tau}}\,.\]
First, observe that one has, for any $l\in\Z^3$,
\begin{equation}\label{eq:first-case-distinction}
\langle \xi-k\tau\rangle\leq \langle\xi-l\tau\rangle+\langle k-l\rangle\,\langle\tau\rangle\lesssim \langle k-l,\xi-l\tau\rangle\,\langle\tau\rangle\,.
\end{equation}
On the multiplier term, since
\[\langle k,\xi\rangle\leq \langle l,l\tau\rangle+\langle k-l,\xi-l\tau\rangle\]
holds, we distinguish two cases:\\

\noindent\textbf{Case 1}: $\langle l,l\tau\rangle\geq\frac12\langle k,\xi\rangle$. 
Then, one has
\begin{align*}
\frac{\lvert A_{k,\xi}-A_{k-l,\xi-l\tau}\rvert}{A_{l,l\tau}\,A_{k-l,\xi-l\tau}}
\lesssim&\,A^{-1}_{k-l,\xi-l\tau}+A^{-1}_{k,\xi}\lesssim \langle k-l,\xi-l\tau\rangle^{-\sigma}+\langle k,\xi\rangle^{-\sigma}
\end{align*}
and consequently, by \eqref{eq:first-case-distinction},
\begin{align*}
\lvert \xi-k\tau\rvert\frac{\lvert A_{k,\xi}-A_{k-l,\xi-l\tau}\rvert}{A_{l,l\tau}\,A_{k-l,\xi-l\tau}}\lesssim&\,\langle\tau\rangle\left(\langle k-l,\xi-l\tau\rangle^{-\sigma+1}+\langle k,\xi\rangle^{-\sigma+1}\right)\lesssim \langle \tau\rangle\,\left(\langle k-l\rangle^{-\sigma+1}+\langle l\rangle^{-\sigma+1}\right)\,.
\end{align*}
\textbf{Case 2}: $\langle k-l,\xi-l\tau\rangle\geq\frac12\langle k,\xi\rangle$. By Taylor's theorem, one obtains the following inequality for any $y,y^\prime\in \R^{6}$.
\begin{align*}
\lvert e^{z\langle y\rangle^\gamma}\langle y\rangle^\sigma-e^{z\langle y^\prime\rangle^\gamma}\langle y^\prime\rangle^\sigma\rvert\lesssim&\,e^{z\langle y\rangle^\gamma}\lvert\langle y\rangle^\gamma-\langle y^\prime\rangle^\gamma\rvert\,\,\left[e^{z\langle y\rangle^\gamma}\langle y\rangle^\sigma+e^{z\langle y^\prime\rangle^\gamma}\langle y^\prime\rangle^\sigma\right]\\
\lesssim&\, \frac{\langle y-y^\prime\rangle}{\langle y\rangle^{1-\gamma}+\langle y^\prime\rangle^{1-\gamma}}\,\left[e^{z\langle y\rangle^\gamma}\langle y\rangle^\sigma+e^{z\langle y^\prime\rangle^\gamma}\langle y^\prime\rangle^\sigma\right]\,.
\end{align*}
This implies, using \eqref{eq:algebra-prop} in the second step and \eqref{eq:first-case-distinction} in the third,
\begin{align*}
\lvert \xi-k\tau\rvert&\,\frac{\lvert A_{k,\xi}-A_{k-l,\xi-l\tau}\rvert}{A_{l,l\tau}\,A_{k-l,\xi-l\tau}}\lesssim\langle \xi-k\tau\rangle\frac{\langle l,l\tau\rangle}{\langle k,\xi\rangle^{1-\gamma}+\langle k-l,\xi-l\tau\rangle^{1-\gamma}}\frac{A_{k,\xi}+A_{k-l,\xi-l\tau}}{A_{l,l\tau}\,A_{k-l,\xi-l\tau}}\\
\lesssim &\,\frac{\langle \xi-k\tau\rangle\langle l,l\tau\rangle^{-\sigma+1}}{\langle k,\xi\rangle^{1-\gamma}+\langle k-l,\xi-l\tau\rangle^{1-\gamma}}\\
\lesssim&\,\sqrt{\frac{\langle k,\xi\rangle\langle\tau\rangle}{{\langle k,\xi\rangle^{1-\gamma}+\langle k-l,\xi-l\tau\rangle^{1-\gamma}}}}\sqrt{\frac{\langle k-l,\xi-l\tau\rangle\langle\tau\rangle}{\langle k,\xi\rangle^{1-\gamma}+\langle k-l,\xi-l\tau\rangle^{1-\gamma}}}\langle l,l\tau\rangle^{-\sigma+1}\\
\lesssim&\,\langle k,\xi\rangle^\frac{\gamma}2\langle k-l,\xi-l\tau\rangle^{\frac{\gamma}2}\langle l\rangle^{-\sigma+1}\langle\tau\rangle\,.
\end{align*}
Combining both cases, we altogether obtain
\[\lvert \xi-k\tau\rvert\frac{\lvert A_{k,\xi}-A_{k-l,\xi-l\tau}\rvert}{A_{l,l\tau}\,A_{k-l,\xi-l\tau}}\lesssim\langle\tau\rangle\left(\langle k-l\rangle^{1-\sigma}+\langle l\rangle^{1-\sigma}\right)\,\langle k,\xi\rangle^{\frac{\gamma}2}\,\langle k-l,\xi-l\tau\rangle^{\frac{\gamma}2}\]
and thus, using the Young equality and $\alpha\leq 1$ in the second step, that we can bound the contribution from $(R)$ that remains on the right hand side of \eqref{eq:contr-R-deltG} by
\begin{align*}
\lesssim&\,a(T(\tau))\langle\tau\rangle\,\sum_{j=0}^3\sum_{k,l\in\Z^3\backslash\{0\}}\left(\langle k-l\rangle^{1-\sigma}+\langle l\rangle^{1-\sigma}\right)\lvert l\rvert^{\alpha-1}\cdot \left(\lvert l\rvert^{-\alpha}\,A_{l,l\tau}\,\lvert\hat{\rho}_l(\tau)\rvert\right)\cdot\\
&\,\quad\cdot\int_{\R^3}\left(\langle k,\xi\rangle^{\frac{\gamma}2}\,A_{k,\xi}\,\lvert(D_\xi^j\hat{h})(\tau,k,\xi)\rvert\right)\,\left(\langle k-l,\xi-l\tau \rangle^{\frac{\gamma}2}\,A_{k-l,\xi-l\tau}\,\lvert(D_\xi^j\hat{h})(\tau,k-l,\xi-l\tau)\rvert\right)\,d\xi\\
\lesssim&\,a(T(\tau))\langle\tau\rangle\,F[\rho](\tau,\cdot)\,\sum_{j=0}\sum_{k,l\in\Z^3\backslash\{0\}}\left(\langle k-l\rangle^{1-\sigma}+\langle l\rangle^{1-\sigma}\right)\lvert l\rvert^{\alpha-1}\cdot\\
&\,\cdot\left(\int_{\R^3} \langle k-l,\xi-l\tau\rangle^\gamma A_{k-l,\xi-l\tau}^2\,\left\lvert (D^{j}_\xi\hat{h})(\tau,k-l,\xi-l\tau)\right\rvert^2\,d\xi+\int_{\R^3} \langle k,\xi\rangle^\gamma A^2_{k,\xi}\,\left\lvert(D^{j}_\xi\hat{h})(\tau,k,\xi)\right\rvert^2\,d\xi\right)\\
\lesssim&\,a(T(\tau))\langle\tau\rangle\,F[\rho](\tau,\cdot)\,\del_zG[h](\tau,\cdot)\,.
\end{align*}
Regarding the remaining term arising from $(P)$, we argue exactly as above to obtain, one has, using $\alpha\leq 1$ in the last step,
\begin{align*}
\lesssim&\,a(T(\tau))\sum_{j=0}^3\sum_{k,l\in\Z^3\backslash\{0\}}\int_{\R^3}\frac{A_{k,\xi}}{A_{l,l\tau}A_{k-l,\xi-l\tau }}\,\frac{\langle k\rangle^{\frac{\gamma}2}\langle k-l\rangle^{\frac{\gamma}2}}{\langle l\rangle^{\frac{\gamma}2}\,\lvert l\rvert^{1-\alpha}}\,\left(\lvert l\rvert^{-\alpha}\,A_{l,l\tau}\lvert\hat{\rho}_l(\tau)\rvert\right)\cdot\\
&\qquad\qquad\qquad\qquad\qquad\cdot\left\lvert A_{k-l,\xi-l\tau}\,(D^{j-1}_\xi\hat{h})(\tau,k-l,\xi-l\tau)\right\rvert\,\left\lvert A_{k,\xi}\,(D^{j}\hat{h})(\tau,{k,\xi})\right\rvert\,d\xi\\
\lesssim&\,a(T(\tau))F[\rho](\tau,z)\sum_{k,l\in\Z^3\backslash\{0\}}\left(\langle k-l\rangle^{-\sigma}+\langle l\rangle^{-\sigma}\right)\lvert l\rvert^{\alpha-1-\frac{\gamma}2}\cdot\\
&\,\qquad\cdot\left(\int_{\R^3} \langle k-l\rangle^\gamma A_{k-l,\xi-l\tau}^2\,\left\lvert(D^{j-1}_\xi\hat{h})(\tau,k-l,\xi-l\tau)\right\rvert^2\,d\xi+\int_{\R^3} \langle k\rangle^\gamma A_{k,\xi}^2\,\left\lvert(D_\xi^{j}\hat{h})(\tau,k,\xi)\right\rvert^2\,d\xi\right)\\
\lesssim&\,a(T(\tau))F[\rho](\tau,\cdot)\,\del_zG[h](\tau,\cdot)\,.
\end{align*}
\end{proof}
This implies the following in terms of sliding regularity.
\begin{corollary}[Differential bound in sliding regularity]\label{cor:F-to-G}
Under the conditions of Lemma \ref{lem:delt-G}, as long as
\begin{equation}\label{eq:F-cond}
a(T(\tau))\tilde{F}[\rho](\tau)\lesssim \sqrt{\epsilon} \,\langle\tau\rangle^{-2-\delta}
\end{equation}
holds for a sufficiently small $\epsilon>0$ and $\tau\in[0,\tau_{Boot})$, one has
\[\left(\del_\tau\sqrt{\tilde{G}[h]}\right)(\tau)\lesssim a(T(\tau))\tilde{F}[\rho](\tau)\,.\]
\end{corollary}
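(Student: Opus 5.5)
Starting from Lemma \ref{lem:delt-G}, I evaluate at the sliding parameter $z(\tau)$ via the chain rule. Since $\tilde{G}[h](\tau)=G[h](\tau,z(\tau))$, we have
\[
\del_\tau\tilde{G}[h](\tau)=(\del_\tau G[h])(\tau,z(\tau))+\dot z(\tau)\,(\del_zG[h])(\tau,z(\tau)),
\]
where $\dot z(\tau)=-\lambda_1\delta\,\tau\langle\tau\rangle^{-\delta-2}$. Inserting \eqref{eq:G-evol-ineq}, which is valid at $z=z(\tau)\in[0,1]$ (in the analytic case $z(\tau)\le 2\lambda_1<0.5\theta_0$), gives
\[
\del_\tau\tilde{G}[h]\le C\,a(T(\tau))\tilde{F}[\rho]\sqrt{\tilde G[h]}+\Big(C\,a(T(\tau))\langle\tau\rangle\tilde F[\rho]-\lambda_1\delta\,\tau\langle\tau\rangle^{-\delta-2}\Big)\,\del_zG[h](\tau,z(\tau)).
\]
The key observation is that $\del_zG[h]\ge 0$, since it is a sum of integrals of $2\langle k,\xi\rangle^\gamma A^2|D^j_\xi\hat h|^2$. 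The second term is therefore harmless once its bracket is nonpositive.

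Under \eqref{eq:F-cond}, $C a(T(\tau))\langle\tau\rangle\tilde F[\rho]\le C'\sqrt{\epsilon}\langle\tau\rangle^{-1-\delta}$. For $\tau\ge1$ we have $\tau\langle\tau\rangle^{-\delta-2}\simeq\langle\tau\rangle^{-1-\delta}$, so the bracket is nonpositive as soon as $C'\sqrt\epsilon\le c\lambda_1\delta$, which holds for $\epsilon$ small.

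The main obstacle is small $\tau$, because $\dot z(0)=0$ and the damping from the sliding norm vanishes there. I would handle this in one of two ways.

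\begin{itemize}
\item Replace $\tau\langle\tau\rangle^{-\delta-2}$ by $\langle\tau\rangle^{-\delta-1}$, i.e. take a sliding parameter with derivative comparable to $\langle\tau\rangle^{-1-\delta}$, such as $z(\tau)=\lambda_1(1+(1+\tau)^{-\delta})$. This is the natural reading of Definition \ref{def:slide}.
\item On $[0,1]$, absorb the term directly. There, local well-posedness (Lemma \ref{lem:lwp}) gives $\del_zG[h]$ controlled by a slightly stronger Gevrey norm of $h$ at radius $\lambda_0>2\lambda_1$, and hence by the initial data. This yields a contribution $\lesssim\sqrt\epsilon\,\epsilon$, which is compatible with the estimate after adjusting constants.
\end{itemize}

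I would present the first option and remark on the second.

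With the bracket nonpositive, we obtain $\del_\tau\tilde G[h]\lesssim a(T(\tau))\tilde F[\rho]\sqrt{\tilde G[h]}$. Dividing by $2\sqrt{\tilde G[h]}$ gives
\[
\del_\tau\sqrt{\tilde G[h]}\lesssim a(T(\tau))\tilde F[\rho].
\]
Where $\tilde G[h]=0$, I regularise with $\sqrt{\tilde G[h]+\eta}$ and let $\eta\downarrow0$. This is the claimed bound.
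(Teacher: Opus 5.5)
Your argument is the paper's own: the chain rule for $\tilde G[h](\tau)=G[h](\tau,z(\tau))$, Lemma \ref{lem:delt-G} at $z=z(\tau)$, and nonnegativity of $\del_zG[h]$, so that under \eqref{eq:F-cond} the term $\langle\tau\rangle a(T(\tau))\tilde F[\rho]\,\del_zG[h]$ is absorbed by $z'(\tau)\,\del_zG[h]$, followed by division by $2\sqrt{\tilde G[h]}$. Your small-$\tau$ objection is correct and is something the paper glosses over: it asserts $z'(\tau)=-\delta\lambda_1\langle\tau\rangle^{-1-\delta}$, silently dropping the factor $\tau/\langle\tau\rangle$, so your first fix is exactly what the paper's proof tacitly uses, namely a sliding parameter like $\lambda_1(1+(1+\tau)^{-\delta})$ with $-z'\simeq\langle\tau\rangle^{-1-\delta}$; this change is harmless elsewhere, since the rest of the paper only uses monotonicity, $\lambda_1\le z\le 2\lambda_1$ and the difference bounds in Lemma \ref{lem:ckl-pw}, whereas your second option would at best give an integrated bound rather than the stated pointwise one.
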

\begin{proof} Lemma \ref{lem:delt-G} implies
\[\del_\tau\tilde{G}[h](\tau)-\left[z^\prime(\tau)+C_0\langle\tau\rangle a(T(\tau))\tilde{F}[\rho](\tau)\right]\del_z\tilde{G}[h](\tau,z(\tau))\lesssim a(T(\tau))\tilde{F}[\rho](\tau,z)\sqrt{\tilde{G}[h](\tau,z)}\,,\]
and the statement follows after inserting \eqref{eq:F-cond} and observing
\[z^\prime(\tau)=-\delta\lambda_1\langle\tau\rangle^{-1-\delta}\,.\]
\end{proof}

Thus, we need to establish \eqref{eq:F-cond} as a consequence of \eqref{eq:boot} to control $G[h]$ and close the bootstrap argument. To this end, we will need to use the following scaled Sobolev embedding.

\begin{lemma}[Preliminary bound on the density]\label{lem:prelim} For any $z\in[0,1],\,\tau\geq 0$, and regularity parameters $\sigma>1$, $\gamma\in(0,1],$ the following estimate holds for any $(k,\xi)\in\Z^3\times\R^3$.
\begin{equation}\label{eq:sob-emb}
A(z)_{k,\xi}\lvert\hat{h}(\tau,k,\xi)\rvert\lesssim \sqrt{G[h](\tau,z)}\,.
\end{equation}
In particular, for $\alpha\geq 1$, one has
\begin{equation}\label{eq:sob-emb-FG}
F[\rho](\tau,z)\lesssim\sqrt{G[h](\tau,z)}\,.
\end{equation}
\end{lemma}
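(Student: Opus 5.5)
The plan is a Sobolev embedding in the $\xi$-variable for fixed $k$, using that $G[h]$ controls up to three $\xi$-derivatives of $\hat h$ in weighted $L^2$. Fix $k\in\Z^3$ and $z$, and set $\psi(\xi):=A(z)_{k,\xi}\hat{h}(\tau,k,\xi)$. Since $d=3$, the embedding $W^{3,1}(\R^3)\hookrightarrow L^\infty(\R^3)$ (or more simply $H^2(\R^3)\hookrightarrow L^\infty$) gives $|\psi(\xi)|\lesssim\sum_{j\le 3}\|D^j_\xi\psi\|_{L^2(\R^3)}$. It then remains to compare $D^j_\xi\psi$ with $A(z)_{k,\xi}D^j_\xi\hat h$.

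The key step is the derivative bound on the multiplier. For $y=(k,\xi)$ I will show
\[
\lvert D^j_\xi A(z)_{k,\xi}\rvert\lesssim A(z)_{k,\xi}
\]
for $j\le 3$, uniformly in $z\in[0,1]$. Each derivative of $e^{z\langle y\rangle^\gamma}\langle y\rangle^\sigma$ produces factors of the form $z\gamma\langle y\rangle^{\gamma-1}\cdot\partial\langle y\rangle$ or $\sigma\langle y\rangle^{-1}\partial\langle y\rangle$, together with derivatives of these. All of them are bounded because $\gamma\le1$ and $|\partial_\xi\langle y\rangle|\le1$, with higher derivatives of $\langle y\rangle$ decaying. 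By Leibniz, $|D^j_\xi\psi|\lesssim\sum_{i\le j}A(z)_{k,\xi}|D^i_\xi\hat h|$. Squaring, integrating in $\xi$ and summing over $j\le3$ (keeping a single $k$) bounds everything by $G[h](\tau,z)$, since $G$ sums over all $k$ and contains exactly $j=0,\dots,3$. This proves \eqref{eq:sob-emb}.

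For \eqref{eq:sob-emb-FG}, recall from Section \ref{subsec:transform} that $\hat\rho_k(\tau)=\hat h(\tau,k,k\tau)$. Hence $A(z)_{k,k\tau}|k|^{-\alpha}|\hat\rho_k(\tau)|\le A(z)_{k,k\tau}|\hat h(\tau,k,k\tau)|$ for $k\ne0$, because $|k|^{-\alpha}\le1$ when $k\in\Z^3\backslash\{0\}$ and $\alpha\ge0$. Applying \eqref{eq:sob-emb} at $\xi=k\tau$ and taking the supremum over $k$ gives the claim.

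The only mildly delicate point is the uniformity of the multiplier derivative bounds in $z$ and $k$, and it is routine. The hypothesis $\sigma>1$ is not really needed beyond what $G$ already provides. If one prefers not to differentiate the multiplier, an alternative is to view the bound via Fourier inversion in $\xi$ with weights $\langle v\rangle^{3}$, but the direct Leibniz argument suffices.
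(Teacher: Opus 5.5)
Your proof is correct, but it is organised differently from the paper's.

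The paper never differentiates the multiplier. It iterates the fundamental theorem of calculus in each coordinate to bound $\lvert\hat h(\tau,k,\xi)\rvert^2$ by the integral of $\partial_{\xi_1}\partial_{\xi_2}\partial_{\xi_3}\lvert\hat h(\tau,k,\cdot)\rvert^2$ over the region $\lvert\xi_i'\rvert\ge\lvert\xi_i\rvert$ ($i=1,2,3$), and expands by the product rule and Young's inequality. It then pulls $A(z)_{k,\xi}$ inside the integral using only that $A(z)_{k,\xi}$ is increasing in $\lvert\xi\rvert$, so that $A(z)_{k,\xi}\le A(z)_{k,\xi'}$ on that region. This is a $W^{3,1}$-type embedding applied to $\lvert\hat h\rvert^2$ rather than to $\hat h$.

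That is also the right way to read your parenthetical. Applied to $\psi$ itself, $W^{3,1}\hookrightarrow L^\infty$ yields $L^1$ norms of $D^j_\xi\psi$, which $G$ does not control. In your write-up it is the $H^2(\R^3)\hookrightarrow L^\infty$ embedding that actually delivers the displayed bound.

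The trade-off is as follows. The paper's route needs nothing about the weight beyond monotonicity. However, it uses the full mixed third derivative, hence all $j\le 3$ in $G$, together with decay of $\hat h$ and its first two $\xi$-derivatives at infinity. Your route needs the symbol bounds $\lvert D^j_\xi A(z)_{k,\xi}\rvert\lesssim A(z)_{k,\xi}$, which do hold uniformly in $k$ and $z\in[0,1]$ because $\gamma\le 1$. In exchange, it only uses $j\le 2$ (in dimension $d$, $j\le\lfloor d/2\rfloor+1$). So for this embedding alone, fewer $\xi$-derivatives would suffice than the $d$ stated in Appendix B.

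Your remark that \eqref{eq:sob-emb-FG} only needs $\lvert k\rvert^{-\alpha}\le 1$, i.e.\ $\alpha\ge 0$, matches the paper's argument.
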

\begin{proof}
Since the statements are trivial otherwise, assume $G[h](\tau,z)$ is finite, and thus that $\hat{h}(\tau,k,\xi)$ decays to zero as $\lvert \xi\rvert\to\infty$ for any $k\in\Z^3,\,\tau\geq 0$. Thus, the fundamental theorem of calculus implies
\[\left\lvert\hat{h}(\tau,k,\xi)\right\rvert^2\leq \int_{\lvert\xi_1^\prime\rvert\geq\lvert\xi_1\rvert}\left(\del_{\xi_1}\left\lvert\hat{h}\left(\tau,k,\left(\cdot,\xi_2,\xi_3\right)\right)\right\rvert^2\right)(\xi_1^\prime)\,d\xi_1^\prime\,.\]
Iterating this over all indices, after again observing that first and second derivatives of $\hat{h}$ in $\xi$ decay to 0 as $\lvert \xi\rvert\to\infty$, one gets
\[\left\lvert\hat{h}(\tau,{k,\xi})\right\rvert^2\leq \int_{\lvert\xi_i^\prime\rvert\geq\lvert\xi_i\rvert}\left(\del_{\xi_1}\del_{\xi_2}\del_{\xi_3}\left\lvert\hat{h}\left(\tau,k,\cdot\right)\right\rvert^2\right)(\xi^\prime)\,d\xi^\prime\]
Applying the product rule and applying the Young inequality, this implies
\begin{align*}
A(z)_{k,\xi}^2\,\left\lvert\hat{h}(\tau,k,\xi)\right\rvert^2\lesssim&\, \sum_{j=0}^{3}\int_{\lvert\xi^\prime\rvert\geq\lvert\xi\rvert}A(z)_{k,\xi^\prime}^2\left\lvert (D_{\xi}^j\hat{h})(\tau,k,\xi^\prime)\right\rvert^2\,d\xi^\prime\leq G[h](\tau,z)
\end{align*}
and hence \eqref{eq:sob-emb}, while \eqref{eq:sob-emb-FG} follows by inserting $\xi=k\tau$ into \eqref{eq:sob-emb} using $\lvert k\rvert^{-\alpha}\leq 1$ for $k\neq 0$.
\end{proof}

\subsection{Bounds on the source}\label{subsec:source-nl-bd}
Our goal, now, is to obtain \eqref{eq:F-cond} by leveraging Corollary \ref{cor:lin-damping-gevrey-concrete}, for which we require a pointwise bound on $\tilde{F}_k[S]$ in terms of $\tilde{F}_k[\rho]$. To this end, we will use the initial data assumption to control linear contributions in the source, and the bootstrap assumption to control its nonlinear terms. \\

First, we have the following lemma:

%Our goal is to leverage Corollary \ref{cor:l2-pw-volterra-intro} to derive a pointwise bound on $\tilde{F}_k[\rho]$, and in particular \eqref{eq:F-cond}. This requires, first, that we bound $\tilde{F}_k[\rho]$ in $L^2([0,\tau_{Boot}])$. To this end, we require $L^2$- and pointwise bounds on $\tilde{F}_k[S]$ in terms of $\tilde{F}_k[\rho]$. For both of these, we will use the initial data assumption to control initial data contributions in the source, and the bootstrap assumption to control its nonlinear terms. \\

\begin{lemma}\label{lem:ckl-pw}[Coefficient estimate for pointwise bound]
Assume the following parameter choices for some $\beta,\beta^\prime>0$.
\[\gamma\in\left(1-\frac{2}{3+\beta+\beta^\prime},1\right]\,,\,\quad\alpha\in\left(\frac13,1\right),\quad \sigma>\max\{4,2+\beta+\beta^\prime\}\,.\]
Furthermore, define the coefficient function $C_{k,l}:\{(t,s)\in\R^2\ \vert\ t\geq s\geq 0\}\longrightarrow\R$ for $k,l\in\Z^3\backslash\{0\}$ as
\begin{align*}\numberthis\label{eq:def-Ckl}
C_{k,l}(t,s)=&\,(t-s)\frac{A(z(t))_{k,kt}}{A(z(s))_{l,ls}\,A(z(s))_{k-l,kt-ls}}\left(\frac{\lvert k\rvert}{\lvert l\rvert}\right)^{1-\alpha}\,.
\end{align*}
Then, the following estimate holds for $\eta>0$.
\begin{equation}\label{eq:I1}
\sup_{k\in\Z^3\backslash\{0\}}\sum_{l\in\Z^3\backslash\{0\}}\int_0^\infty \langle t\rangle^{\beta^\prime}\,C_{k,l}(t,s)\langle s\rangle^{\beta-\eta}\,ds\lesssim_{\alpha,\beta,\beta^\prime,\gamma,\eta,\sigma}\langle t\rangle^{-\eta}\,.
\end{equation}
\end{lemma}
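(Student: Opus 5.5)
The plan is to follow the coefficient estimates in \cite[Section 4]{GNR21}, keeping track of the extra weight $\langle t\rangle^{\beta^\prime}\langle s\rangle^{\beta}$. The integration runs over $s\in[0,t]$, since $C_{k,l}$ is only defined there.

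\textbf{Step 1: reduce the multiplier quotient.} Since $z$ is decreasing, $z(t)\leq z(s)$, so I would write
\[\frac{A(z(t))_{k,kt}}{A(z(s))_{l,ls}A(z(s))_{k-l,kt-ls}}\lesssim e^{(z(t)-z(s))\langle k,kt\rangle^\gamma}\,e^{z(s)\left(\langle k,kt\rangle^\gamma-\langle l,ls\rangle^\gamma-\langle k-l,kt-ls\rangle^\gamma\right)}\,\frac{\langle k,kt\rangle^\sigma}{\langle l,ls\rangle^\sigma\langle k-l,kt-ls\rangle^\sigma}.\]
The second exponential is at most $1$ by Lemma \ref{lem:algebraic}. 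For $\gamma<1$ I would sharpen this with the improved triangle inequality
\[\langle x+y\rangle^\gamma\leq\langle x\rangle^\gamma+\langle y\rangle^\gamma-c_\gamma\min\{\langle x\rangle,\langle y\rangle\}^\gamma\]
(up to harmless constants). The polynomial quotient is $\lesssim\min\{\langle l,ls\rangle,\langle k-l,kt-ls\rangle\}^{-\sigma}$, because the larger of the two factors dominates $\langle k,kt\rangle$ up to a factor $2$. This gives two sources of decay: a Gevrey gain $e^{-c\lambda_1\min\{\cdot\}^\gamma}$ and a Sobolev gain $\min\{\cdot\}^{-\sigma}$. The sliding factor gives a further gain $e^{-c\lambda_1\delta (t-s)\langle t\rangle^{-1-\delta}\langle k,kt\rangle^\gamma}$ whenever $t-s$ is comparable to $t$.

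\textbf{Step 2: split the $(l,s)$-range.} I would distinguish three regimes.
\begin{itemize}
\item[(i)] \emph{$\langle l,ls\rangle\leq\langle k-l,kt-ls\rangle$.} The density carries the low frequency. Here $\langle l,ls\rangle^{-\sigma}\leq\langle l\rangle^{-\sigma}\langle s\rangle^{-\sigma}$, and $(t-s)\lesssim\langle t\rangle$. Since $\sigma>2+\beta+\beta^\prime$ and $\sigma>4$, the $l$-sum converges, and splitting at $s\gtrless t/2$ shows the $s$-integral is $\lesssim\langle t\rangle^{-\eta}$. For $s<t/2$, the Gevrey gain from $\langle k-l,kt-ls\rangle\gtrsim|k|t$ beats all powers of $t$. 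For $s>t/2$, the decay $\langle s\rangle^{-\sigma}$ absorbs $\langle t\rangle^{1+\beta+\beta'}$.
\item[(ii)] \emph{$h$ carries the low frequency, with $k=l$.} Then $kt-ls=k(t-s)$, so $\langle k(t-s)\rangle^{-\sigma}$ makes $(t-s)$ integrable. The only loss is the factor $\langle t\rangle^{\beta+\beta^\prime}$ coming from $s\approx t$, and the factor $(|k|/|l|)^{1-\alpha}=1$. I would recover this loss from the sliding gain when $t-s\gtrsim \langle t\rangle^{\delta'}$, and from the region's small measure otherwise, choosing $\delta$ small.
\item[(iii)] \emph{The echo case, $l\neq k$ and $s$ near $s_\ast=kt/l$ (componentwise in the direction of $k$).} Here $t-s\approx t|k-l|/|l|$, and the weight $\langle k-l,kt-ls\rangle^{-\sigma}$ localises $s$ to a window of width $\sim 1/|l|$. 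The Gevrey gain is only $e^{-c\lambda_1\langle k-l,kt-ls\rangle^\gamma}$, which is weak.
\end{itemize}

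\textbf{Step 3: the echo case, which I expect to be the main obstacle.} In regime (iii) I would instead use the sharper form of the gain when the large frequency dominates:
\[\langle k,kt\rangle^\gamma-\langle l,ls\rangle^\gamma\lesssim-\frac{\langle l,ls\rangle-\langle k,kt\rangle}{\langle k,kt\rangle^{1-\gamma}},\]
together with the sliding gain $\langle t\rangle^{-1-\delta}(t-s)(|k|t)^\gamma$. Collecting powers, each resonant $l$ contributes roughly
\[\frac{t}{|l|}\cdot\frac{1}{|l|}\,\langle t\rangle^{\beta+\beta^\prime}\langle s_\ast\rangle^{-\eta}\left(\frac{|k|}{|l|}\right)^{1-\alpha}.\]
Summing over $|l|\lesssim t$ leaves a loss that must be killed by the Gevrey gain. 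That gain is of size $\exp\!\left(-c\,(|k|t)^\gamma/|l|^{\ldots}\right)$ on the dangerous range $|l|\gtrsim t^{1-\gamma}$-type scales.

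Balancing these, exactly as in the heuristic $\gamma>\frac13$ threshold of \cite{MV10,GNR21}, leads to the requirement $(1-\gamma)(3+\beta+\beta^\prime)<2$, which is precisely the assumed range of $\gamma$. I would carry this out by splitting $l$ into $|l|\leq t^{\kappa}$ and $|l|>t^{\kappa}$ with $\kappa$ chosen at the balancing exponent. In the first range I would use the Gevrey gain together with Lemma \ref{lem:tool} to absorb polynomial factors. In the second range I would use the summability gained from $|l|^{-2-(1-\alpha)}$ with $\alpha>\frac13$, together with the Sobolev gain $\sigma>4$ in the remaining three-dimensional directions orthogonal to $k$. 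Getting the exponent bookkeeping right here, and in particular handling the three-dimensional geometry (where $kt\approx ls$ forces $l\parallel k$ up to $\langle k-l\rangle^{-\sigma}$ losses), is where I expect most of the work.
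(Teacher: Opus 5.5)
\textbf{The gap is in your regime (ii), and it cannot be repaired, because \eqref{eq:I1} is actually false for the term $l=k$.}

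Take $l=k$, $t\ge 2$ and $s\in[t-|k|^{-1},t]$. Then:
\begin{itemize}
\item $\langle k-l,kt-ls\rangle=\langle k(t-s)\rangle\le\sqrt2$ and $\langle k,ks\rangle\simeq\langle k,kt\rangle$;
\item by the mean value theorem, $(z(s)-z(t))\langle k,ks\rangle^\gamma\lesssim\lambda_1\delta(t-s)\langle t\rangle^{-1-\delta}|k|^\gamma\langle t\rangle^{\gamma}\lesssim 1$.
\end{itemize}
So every factor in the multiplier quotient is comparable to $1$, and $C_{k,k}(t,s)\gtrsim t-s$. Consequently
\[
\int_{t-|k|^{-1}}^{t}\langle t\rangle^{\beta'}C_{k,k}(t,s)\langle s\rangle^{\beta-\eta}\,ds\gtrsim |k|^{-2}\langle t\rangle^{\beta+\beta'-\eta}.
\]
For $|k|=1$ this exceeds $\langle t\rangle^{-\eta}$ by the unbounded factor $\langle t\rangle^{\beta+\beta'}$.

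Your proposed remedy does not reach this region. The sliding gain $\exp(-c\lambda_1\delta(t-s)|k|^\gamma\langle t\rangle^{\gamma-1-\delta})$ is trivial there. The near-diagonal window is also not of small measure in any useful sense: its length $|k|^{-1}$ does not shrink with $t$, while the integrand on it is $\simeq\langle t\rangle^{\beta+\beta'-\eta}(t-s)$.

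The paper's own proof has the same hole. In its Case 2b)(1) it bounds $\int_{t/2}^t\langle t\rangle^{\beta'}\langle t-s\rangle^{1-\sigma}\langle s\rangle^{\beta-\eta}\,ds$ by $\langle t\rangle^{-\sigma+2+\beta+\beta'-\eta}$. That amounts to replacing $\langle t-s\rangle^{1-\sigma}$ by $\langle t\rangle^{1-\sigma}$, even though $t-s$ ranges down to $0$; the correct size is $\langle t\rangle^{\beta+\beta'-\eta}$. The lemma therefore has to be weakened, either by excluding $l=k$ or by allowing this loss. The $l=k$ part of the source, i.e.\ the interaction of $\hat\rho_k$ with the spatially homogeneous mode $\hat h(\cdot,0,\cdot)$, which has no reason to decay, then needs a separate mechanism downstream.

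\textbf{Away from $l=k$, your case split matches the paper's, but two steps as written would not go through.} Your (i) corresponds to its Case 1 and your (iii) to its Case 2b)(3).
\begin{itemize}
\item In (i) you never control $(|k|/|l|)^{1-\alpha}$, which is unbounded when $|k|\gg|l|$. The paper absorbs $|k|^{1-\alpha}$ into the sliding gain $\exp(-\lambda_1(t-s)\langle t\rangle^{\gamma-1-\delta}|k|^\gamma)$ via Lemma \ref{lem:tool}. This leaves the integrable singularity $|t-s|^{1-\frac{1-\alpha}{\gamma}}$.
\item In the echo case, $|l|^{\alpha-3}$ is not summable over $\Z^3$, so summability must come from elsewhere. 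The paper proceeds in three steps.
  \begin{itemize}
  \item It uses $|k(t-s)|\ge\frac t4|k-l|$ to turn the sliding gain into $\exp(-c\langle t\rangle^{\gamma-\delta}|k-l|\,|k|^{\gamma-1})$.
  \item It trades $\langle t\rangle^{1+\beta+\beta'}$ for $(|k-l|\,|k|^{\gamma-1})^{-\frac{1+\beta+\beta'}{\gamma-\delta}}$ using Lemma \ref{lem:tool}.
  \item It integrates $\langle k-l,kt-ls\rangle^{-\sigma}|l|$ exactly in $s$, which yields the summable factor $\langle k-l\rangle^{1-\sigma}$.
  \end{itemize}
  Your threshold $(1-\gamma)(3+\beta+\beta')<2$ then appears as finiteness of $\sup_k|k|^{-2+\frac{(1-\gamma)(1+\beta+\beta')}{\gamma-\delta}}$.
\end{itemize}
This mechanism relies essentially on $|k-l|\ge 1$, which is exactly why nothing of the kind is available for $l=k$.
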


Assuming for now that this lemma holds, one obtains the following:

\begin{lemma}[Pointwise Gevrey estimate for the source term]\label{lem:pw-est-source} Let the coefficient choices be as in Lemma \ref{lem:ckl-pw}, let the scale factor satisfy Assumption \ref{ass:scale} with respect to $\beta>0$ and take $\beta^\prime=\frac32\beta$. Then, for $\eta>0$, the following pointwise estimate holds for the source term \eqref{eq:source-nl} for any $\tau\in[0,\tau_{Boot}]$.
\begin{equation}\label{eq:pw-est-source}
\langle\tau\rangle^{\eta+\beta^\prime} \tilde{F}[S](\tau)\lesssim \epsilon\,e^{-\frac{\lambda_1}3\langle\tau\rangle^\gamma}+\sqrt{C_0}\sqrt{\epsilon}\sup_{0\leq \tilde{\tau}\leq\tau}\left(\langle \tilde{\tau}\rangle^{\eta}\tilde{F}[\rho](\tilde{\tau})\right)\,.
\end{equation}
\end{lemma}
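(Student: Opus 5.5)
The source \eqref{eq:source-nl} has two pieces, which I treat separately: the initial-data term $\hat{h}(0,k,k\tau)$ and the nonlinear Duhamel term. For each I multiply by $\lvert k\rvert^{-\alpha}A(z(\tau))_{k,k\tau}$, take the supremum over $k\neq0$, and multiply by $\langle\tau\rangle^{\eta+\beta^\prime}$.

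\emph{Initial-data term.} Since $h(0)=g(t_0,\cdot,a(t_0)^{-1}\cdot)$, the rescaling by $a(t_0)$ is harmless. Lemma \ref{lem:prelim}, applied with regularity parameter $\lambda_0$, gives
\[
e^{\lambda_0\langle k,k\tau\rangle^\gamma}\langle k,k\tau\rangle^\sigma\lvert\hat h(0,k,k\tau)\rvert\lesssim\epsilon^2 .
\]
Here I use the initial assumption \eqref{eq:ass-init}, together with the weighted $\xi$-derivative control implicit in the data class; if needed I assume the corresponding $G$-norm bound at $\tau=0$. Since $z(\tau)\leq2\lambda_1<\lambda_0/2$, I gain a factor $e^{-(\lambda_0/2)\langle k,k\tau\rangle^\gamma}$. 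For $k\neq0$ we have $\langle k,k\tau\rangle\gtrsim\langle\tau\rangle$. By Lemma \ref{lem:tool}, the factor $e^{-(\lambda_0/2-\lambda_1/3)\langle\tau\rangle^\gamma}$ absorbs $\langle\tau\rangle^{\eta+\beta^\prime}$. This leaves $\epsilon e^{-\frac{\lambda_1}{3}\langle\tau\rangle^\gamma}$, the first term of \eqref{eq:pw-est-source}.

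\emph{Nonlinear term.} Write $a(T(\tilde\tau))\lesssim\langle\tilde\tau\rangle^\beta$ by \eqref{eq:scale-ass}. Then use $\lvert l\cdot k\rvert/\lvert l\rvert^2\le\lvert k\rvert/\lvert l\rvert$ and insert the weights:
\[
\lvert k\rvert^{-\alpha}\lvert l\cdot k\rvert\lvert l\rvert^{-2}=\Bigl(\tfrac{\lvert k\rvert}{\lvert l\rvert}\Bigr)^{1-\alpha}\lvert l\rvert^{-\alpha}.
\]
This gives
\[
\lvert k\rvert^{-\alpha}A(z(\tau))_{k,k\tau}\lvert\text{NL}_k\rvert\lesssim\sum_{l}\int_0^\tau C_{k,l}(\tau,\tilde\tau)\langle\tilde\tau\rangle^\beta\,\lvert l\rvert^{-\alpha}A(z(\tilde\tau))_{l,l\tilde\tau}\lvert\hat\rho_l(\tilde\tau)\rvert\;A(z(\tilde\tau))_{k-l,k\tau-l\tilde\tau}\lvert\hat h(\tilde\tau,k-l,k\tau-l\tilde\tau)\rvert\,d\tilde\tau,
\]
with $C_{k,l}$ as in \eqref{eq:def-Ckl}.

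The last factor is bounded by $\sqrt{\tilde G[h](\tilde\tau)}\lesssim\sqrt{(1+C_0)\epsilon}$, using Lemma \ref{lem:prelim} and the bootstrap assumption \eqref{eq:boot}. The $\rho$ factor is at most $\tilde F[\rho](\tilde\tau)\le\langle\tilde\tau\rangle^{-\eta}\sup_{s\le\tau}\langle s\rangle^\eta\tilde F[\rho](s)$. (The $h$-factor depends on $k-l$ and $k\tau-l\tilde\tau$; evaluating $h$ at time $\tilde\tau$ is the intended reading of $\hat h(T(\tilde\tau),\dots)$.)

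What remains is exactly $\langle\tau\rangle^{\beta^\prime}\sum_l\int_0^\tau C_{k,l}(\tau,s)\langle s\rangle^{\beta-\eta}ds$. Lemma \ref{lem:ckl-pw}, estimate \eqref{eq:I1}, bounds this uniformly in $k$ by $\langle\tau\rangle^{-\eta}$. Multiplying through by $\langle\tau\rangle^{\eta}$ gives the second term of \eqref{eq:pw-est-source}, after absorbing $\sqrt{1+C_0}\lesssim\sqrt{C_0}$ for $C_0\ge1$.

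\emph{Main obstacle.} Given Lemma \ref{lem:ckl-pw}, the only delicate point is the bookkeeping. The multipliers must be evaluated at the decreasing sliding parameter: $A(z(\tau))$ on the left, $A(z(\tilde\tau))$ on the right, precisely as $C_{k,l}$ is defined. The time weights must also match: $\langle\tau\rangle^{\beta^\prime}$ from the left-hand side and $\langle\tilde\tau\rangle^{\beta-\eta}$ from the scale factor and the supremum, so that \eqref{eq:I1} applies verbatim. The genuine difficulty, controlling echoes via $C_{k,l}$, is deferred to Lemma \ref{lem:ckl-pw}.
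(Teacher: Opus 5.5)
Your proposal is correct and follows the paper's proof essentially step for step. The initial-data term is handled by \eqref{eq:sob-emb} at radius $\lambda_0$ together with the gap $\lambda_0-z(\tau)>\lambda_0/2$, and the nonlinear term is reduced, via $a(T(\tilde\tau))\lesssim\langle\tilde\tau\rangle^\beta$, \eqref{eq:sob-emb} and the bootstrap assumption, to the coefficient sum controlled by Lemma \ref{lem:ckl-pw}. Evaluating the $h$-factor at time $\tilde\tau$ and requiring smallness of $G[h](0,\lambda_0)$ itself, rather than only of the norm in \eqref{eq:ass-init}, match conventions the paper uses implicitly.
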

%Until applying Lemma \ref{lem:ckl-pw}, this proof can be performed as in \cite[Lemma 4.4]{GNR21} as well as applied to higher dimensional settings, so we write it in all generality \todo{for now}.
\begin{proof} First recall that, since $g$ is charge-neutral initiallly, we have $\hat{\rho}_0\equiv 0$, thus we drop any such terms below without further comment.\\
To establish \eqref{eq:pw-est-source}, it is necessary to estimate, uniformly in $k\in\Z^3\backslash\{0\}$,
\begin{align*}
&\,\langle \tau\rangle^{\eta+\beta^\prime}\,A(z(\tau))_{k,k\tau} \lvert k\rvert^{-\alpha}\,\lvert\hat{S}_k(\tau)\rvert\\
\leq&\,\langle \tau\rangle^{\eta+\beta^\prime}\,A(z(\tau))_{k,k\tau}\, \lvert k\rvert^{-\alpha}\,\lvert \hat{h}(0,k,k\tau)\rvert\\
&\,+\langle \tau\rangle^{\eta+\beta^\prime}\sum_{l\in \Z^3\backslash\{0\}}\int_{0}^{\tau}a(T(\tilde{\tau}))(\tau-\tilde{\tau})\,A(z(\tau))_{k,k\tau}\lvert k\rvert^{1-\alpha}\lvert l\rvert^{-1}\,\lvert\hat{\rho}_l(\tilde{\tau})\rvert\,\lvert\hat{h}(\tilde{\tau},k-l,(k-l)\tilde{\tau})\rvert\,d\tilde{\tau}
\end{align*}
Regarding the initial data term first, one has
\[\langle \tau\rangle^{\eta+\beta^\prime}\,A(z(\tau))_{k,k\tau}\,\left\lvert \hat{h}(0,k,k\tau)\right\rvert\,\lvert k\rvert^{-\alpha}\leq \left(A(\lambda_0)_{k,k\tau}\,\hat{h}(\tau,k,k\tau)\right)\cdot \langle \tau\rangle^{\eta+\beta^\prime}\,\frac{A(z(\tau))_{k,k\tau}}{A(\lambda_0)_{k,k\tau}}\,,\]
where the weight can be bounded, using $\lvert k\rvert\geq 1$ and $\lambda_0-z(\tau)\geq \lambda_0-2\lambda_1>\frac{\lambda_0}2$, by
\[\lesssim \langle \tau\rangle^{\eta+\beta^\prime}\,\exp\left(-\frac{\lambda_0}2\langle \tau\rangle^{\gamma}\right)\lesssim_{\lambda_0,\eta,\beta^\prime} \exp\left(-\frac{\lambda_0}3\langle\tau\rangle^{\gamma}\right)\,.\]
Using \eqref{eq:sob-emb} and the initial data assumption \eqref{eq:ass-init}, one obtains
\[A(z(\tau))_{k,k\tau}\,\lvert \hat{h}(0,k,k\tau)\rvert\,\lvert k\rvert^{-\alpha}\lesssim \exp\left(-\frac{\lambda_1}3\langle\tau\rangle^{\gamma}\right)\sqrt{{G}[h](0,\lambda_0)}\lesssim \epsilon\exp\left(-\frac{\lambda_1}3\langle\tau\rangle^{\gamma}\right)\,.\]
Regarding the nonlinear term, one estimates using Assumption \ref{ass:scale} to estimate the scale factor along with \eqref{eq:sob-emb}, one gets
\begin{align*}
&\,\langle \tau\rangle^{\eta+\beta^\prime}\,\sum_{l\in \Z^3\backslash\{0\}}\int_{0}^{\tau}a(T(\tilde{\tau}))(\tau-\tilde{\tau})\,A_{k,k\tau}(z(\tau))\lvert k\rvert^{1-\alpha}\lvert l\rvert^{-1}\,\lvert\hat{\rho}_l(\tilde{\tau})\rvert\,\lvert\hat{h}(\tilde{\tau},k-l,\tilde{\tau}(k-l))\rvert\,d\tilde{\tau}\\
\lesssim&\,\langle \tau\rangle^{\eta}\,\sum_{l\in\Z^3\backslash\{0\}}\int_{0}^{\tau}\left(\langle \tau\rangle^{\beta^\prime}\,(\tau-\tilde{\tau})\frac{A(z(\tau))_{k,k\tau}}{A(z(\tilde{\tau}))_{l,l\tilde{\tau}}\,A(z(\tilde{\tau}))_{k-l,k\tau-l\tilde{\tau}}}\lvert k\rvert^{1-\alpha}\lvert l\rvert^{-1+\alpha}\,\langle\tilde{\tau}\rangle^{\beta-\eta}\right)\,\left(\langle\tilde{\tau}\rangle^\eta\,\tilde{F}[\rho](\tilde{\tau})\right)\,\cdot\\
&\,\qquad\qquad\qquad\cdot\left\lvert A(z(\tilde{\tau}))_{k-l,k\tau-l\tilde{\tau}}\,\hat{h}(\tau,k-l,k\tau-l\tilde{\tau})\right\rvert\,d\tilde{\tau}\\
\lesssim&\,\sup_{0\leq \tilde{\tau}\leq \tau}\left(\langle \tilde{\tau}\rangle^{\eta}\tilde{F}[\rho](\tilde{\tau})\right)\sqrt{\tilde{G}[h](\tau)}\cdot \langle \tau\rangle^\eta\sum_{l\in\Z^3\backslash\{0\}}\left(\int_{0}^{\tau} \langle \tau\rangle^{\beta^\prime} C_{k,l}(\tau,\tilde{\tau})\langle\tilde{\tau}\rangle^{\beta-\eta}\,d\tilde{\tau}\right)
\end{align*}
Applying Lemma \ref{lem:ckl-pw}, Assumption \ref{ass:scale} and the bootstrap assumption \eqref{eq:boot} then leads to the claim.
\end{proof}

\begin{proof}[Proof of Lemma \ref{lem:ckl-pw}]
First, by Lemma \ref{lem:algebraic}, we have
\[\langle k-l,kt-ls\rangle^\gamma+\langle l,ls\rangle^\gamma\geq \left(\langle k-l,kt-ls\rangle^\gamma+\langle l,ls\rangle^\gamma\right)^\gamma\geq\langle k,kt\rangle^\gamma\]
and consequently obtain the bound
\begin{equation}\label{eq:Ckl-first-est}
C_{k,l}(t,s)\leq (t-s)\left(\frac{\lvert k\rvert}{\lvert l\rvert}\right)^{(1-\alpha)}\,\left(\frac{\langle k,kt\rangle}{\langle l,ls\rangle\,\langle k-l,kt-ls\rangle}\right)^{\sigma}\exp\left[\left(z(t)-z(s)\right)\,\langle k,kt\rangle^\gamma\right]\,.
\end{equation}

Furthermore, the above implies that $\langle k-l,kt-ls\rangle\geq\frac12\langle k,kt\rangle$ or $\langle l,ls\rangle\geq\frac12\langle k,kt\rangle$ must hold, and thus we distinguish two cases.\\

\noindent\textbf{Case 1}: Assume $\langle k-l,kt-ls\rangle\geq\frac12\langle k,kt\rangle$. \\

Then, \eqref{eq:Ckl-first-est} simplifies to
\[C_{k,l}(t,s)\leq 2^\sigma\,\left(\frac{\lvert k\rvert}{\lvert l\rvert}\right)^{1-\alpha}(t-s)\,\exp\left((z(t)-z(s))\langle k,kt\rangle^\gamma\right)\langle l,ls\rangle^{-\sigma}\,.\]

\underline{Case 1a)}: Additionally assume $s\in[0,\frac{t}2]$. This implies

 \[z(t)-z(s)\leq \lambda_1\left(\langle t\rangle^{-\delta}-\langle 0.5\,t\rangle^{-\delta})\right)\leq (2^\delta-1)\lambda_1\,\langle t\rangle^{-\delta}=:\theta_\delta\langle t\rangle^{-\delta}\]
along with
\begin{align*}
\lvert k\rvert^{1-\alpha}(t-s)\,\exp\left(-\theta_\delta\,\langle t\rangle^{-\delta}\langle k,kt\rangle^\gamma\right)&\,\lesssim_{\delta}\exp\left(-\frac{\theta_\delta}2\,\langle t\rangle^{-\delta}\right)\cdot\,\left(\langle k\rangle\langle t\rangle\exp\left(-\theta_\delta^\prime(\langle k\rangle\langle t\rangle)^{\gamma-\delta}\right)\right)\\
&\,\lesssim_{\delta,\gamma}\exp\left(-\frac{\theta_\delta}2\langle t\rangle^{-\delta}\right)
\end{align*}
for $\theta_\delta^\prime=2^{-\frac{\gamma-\delta}2-1}\theta_\delta>0$, where we used in the first step that, since $\lvert k\rvert\geq 1$,
\[\langle k\rangle\langle t\rangle=\sqrt{1+\lvert k\rvert^2+\lvert t\rvert^2+\lvert kt\rvert^2}\leq\sqrt{1+\lvert k\rvert^2+2\lvert kt\rvert^2}\leq \sqrt{2}\langle k,kt\rangle\,,\]
as well as \eqref{eq:tool} in the second. Thus, also using $\langle l\rangle\langle s\rangle\lesssim \langle l,ls\rangle$, we get
\[C_{k,l}(t,s)\lesssim_{\sigma,\gamma,\delta} \langle l\rangle^{-1-\sigma+\alpha}\,\exp\left(-0.5\theta_\delta\langle t\rangle^{\gamma-\delta}\right)\,.\]
Bearing in mind that $\sigma+1-\alpha>5-\alpha>3$ ensures summability in $l$ throughout and using the estimate \eqref{eq:tool}, the contribution of this case to \eqref{eq:I1} can be bounded by
\begin{align*}
&\,\lesssim\left(\sum_{l\in\Z^3\backslash\{0\}} \langle l\rangle^{-1-\sigma+\alpha}\right)\exp\left(-\frac{\theta_\delta}2\langle t\rangle^{\gamma-\delta}\right)\,\langle t\rangle^{\beta^\prime}\int_0^{\frac{t}2}\langle s\rangle^{-\sigma+\beta-\eta}\,ds\\
&\,\lesssim \left(\sum_{l\in\Z^3\backslash\{0\}} \langle l\rangle^{-1-\sigma+\alpha}\right) \langle t\rangle^{\beta+\beta^\prime+1}\exp\left(-\frac{\theta_\delta}2\langle t\rangle^{\gamma-\delta}\right)\\
&\,\lesssim_{\alpha,\beta,\beta^\prime,\gamma,\delta,\sigma}\exp\left(-\frac{\theta_\delta}4\langle t\rangle^{\gamma-\delta}\right)\,.
\end{align*}
Clearly, this is stronger than the claimed bound after potentially updating the implicit constant.\\

\underline{Case 1b)}: For $s\in[\frac{t}2,t]$, we use
\[z(s)-z(t)\geq \lambda_1\frac{t-s}{\langle t\rangle^{1+\delta}}\]
along with $\langle l,ls\rangle\simeq \langle l,lt\rangle$ to get
\begin{align*}
C_{k,l}(t,s)\lesssim&\,(t-s)\lvert l\vert^{\alpha-1}\,\langle l,lt\rangle^{-\sigma}\,\left(\lvert k\rvert^{1-\alpha}\,\exp\left(-\lambda_1\,(t-s){\langle t\rangle^{\gamma-(1+\delta)}}\cdot\lvert k\rvert^\gamma\right)\right).
\end{align*}
Applying \eqref{eq:tool} to the bracketed expression on the right to bound it uniformly in $\lvert k\rvert$, this becomes
\begin{align}\label{eq:coeff-1b}
C_{k,l}(t,s)\lesssim&\,\langle t\rangle^\frac{(1-\gamma+\delta)(1-\alpha)}{\gamma}\lvert l\rvert^{\alpha-1}\langle l,lt\rangle^{-\sigma}\lvert t-s\rvert^{1-\frac{1-\alpha}{\gamma}}\,.
\end{align}
Subsequently, the contribution to \eqref{eq:I1} is bounded by
\begin{align*}
\lesssim&\,\left(\sum_{l\in\Z^3\backslash\{0\}}\lvert l\rvert^{\alpha-1}\langle l\rangle^{-\sigma}\right)\langle t\rangle^{-\sigma+\frac{(1-\gamma+\delta)(1-\alpha)}{\gamma}+\beta^\prime}\int_0^{\frac{t}2}\lvert s^\prime\rvert^{1-\frac{1-\alpha}{\gamma}}\langle t-s^\prime\rangle^{\beta-\eta}\,ds^\prime\,.
\end{align*}
The integral is finite for
\[1-\frac{1-\alpha}{\gamma}>-1\,\Longleftrightarrow \alpha>1-2\gamma\,,\]
which is ensured by $\alpha>\frac13$ and $\gamma>\frac13$, and the series is summable as in 1a). Altogether, the contribution of this regime is bounded by
\[\lesssim \langle t\rangle^{-\sigma+\frac{(1-\gamma+\delta)(1-\alpha)}{\gamma}+\beta^\prime}\,\langle t\rangle^{2-\frac{1-\alpha}\gamma+\beta-\eta}=\langle t\rangle^{-\eta}\,\langle t\rangle^{-\sigma+\alpha+1+\beta+\beta^\prime+\delta\,\frac{1-\alpha}{\gamma}}\,.\]
This is consistent with the upper bound in \eqref{eq:I1} if $\delta$ is chosen to be appropriately small in a manner dependent on the regularity and expansion parameters since 
\[\sigma>2+\beta+\beta^\prime>\beta+\beta^\prime+\alpha+1\,.\]

\noindent\textbf{Case 2}: Otherwise, one has $\langle l,ls\rangle\geq \frac12\langle k,kt\rangle$, and thus by \eqref{eq:Ckl-first-est}
\begin{equation}\label{eq:Ckl-first-est-c2}
C_{k,l}(t,s)\lesssim_{\sigma} C_{k,l}(t,s)\leq (t-s)\left(\frac{\lvert k\rvert}{\lvert l\rvert}\right)^{(1-\alpha)}\,\langle k-l,kt-ls\rangle^{-\sigma}\,\exp\left[\left(z(t)-z(s)\right)\,\langle k,kt\rangle^\gamma\right]\,.
\end{equation}

\indent\underline{Case 2a)}: If, in addition, $\langle kt-ls\rangle\geq \frac{t}4$ holds, then one can estimate 
\[\langle k-l,kt-ls\rangle^{-\sigma}\lesssim \langle k-l\rangle^{-\sigma}\,\langle k-l,kt-ls\rangle^{-\sigma}\lesssim \langle k-l\rangle^{-\sigma}\,\langle t\rangle^{-\sigma}\,.\]
\begin{enumerate} 
\item If $s\in[0,\frac{t}2]$, we argue from \eqref{eq:Ckl-first-est-c2} as in Case 1a) that one has
\[C_{k,l}(t,s)\lesssim \langle k-l\rangle^{-\sigma}\,\exp\left(-\theta_\delta\langle t\rangle^{\gamma-\delta}\right)\,,\]
and we similarly obtain that the contribution to \eqref{eq:I1} is bounded by
\[\lesssim \left(\sum_{l\in\Z^3}\langle k-l\rangle^{-\sigma}\right) \exp\left(-\frac{\theta_\delta}2\langle t\rangle^{\gamma-\delta}\right)\,\int_0^{\frac{t}2}\langle s\rangle^{-\sigma+\beta-\eta}\,ds\lesssim_{\alpha,\beta,\beta^\prime,\gamma,\delta,\sigma} \exp\left(-\frac{\theta_\delta}{4}\langle t\rangle^{\gamma-\delta}\right)\]
using $\sigma>3$ and \eqref{eq:tool}.\\

\item If $s\in[\frac{t}2,t]$, we argue as in Case 1b) to get
\[C_{k,l}(t,s)\lesssim \langle t\rangle^{\frac{(1-\gamma+\delta)(1-\alpha)}{\gamma}-\sigma}\lvert l\rvert^{\alpha-1}\langle k-l\rangle^{-\sigma}\lvert t-s\rvert^{1-\frac{1-\alpha}\gamma}\,.\]
Thus, since $\alpha<1$ and $\alpha>1-2\gamma$ hold, the respective contribution to \eqref{eq:I1} can be bounded by
\[\lesssim \left(\sum_{{l}\in\Z^3}\left\langle k-l\right\rangle^{-\sigma}\right)\,\langle t\rangle^{-\eta}\,\langle t\rangle^{-\sigma+\alpha+1+\beta+\beta^\prime+\delta\,\frac{1-\alpha}{\gamma}}\]
with $\sigma>3$ ensuring summability of the first term and the final factor being uniformly bounded for sufficiently small $\delta>0$ as in 1b).\\
\end{enumerate}

\underline{Case 2b)}: Otherwise, $\langle kt-ls\rangle\leq\frac{t}4$ holds and we distinguish between the following regimes.\\

\begin{enumerate}
\item For $l=k\neq 0$, we get the following contribution to \eqref{eq:I1}.
\begin{align*}
\int_{\frac{t}2}^t \langle t\rangle^{\beta^\prime}C_{k,k}(t,s)\langle s\rangle^{\beta-\eta}\,ds\lesssim&\,\int_{\frac{t}2}^t \langle t\rangle^{\beta^\prime} \langle t-s\rangle^{-\sigma+1}\langle s\rangle^{\beta-\eta}\,ds\\
\lesssim&\,\langle t\rangle^{-\sigma+2+\beta+\beta^\prime-\eta}\,\int_0^\frac{t}2 \langle s\rangle^{-\sigma+1}\,ds\lesssim \langle t\rangle^{-\sigma+2+\beta+\beta^\prime-\eta}\,.
\end{align*}
This is consistent with the claimed bound since we assumed $\sigma>2+\beta+\beta^\prime$.

\item For $l\neq k$ and $s\in[0,\frac{t}2]$, one argues as in Case 1a) to get
\[C_{k,l}(t,s)\leq \exp\left(-\frac{\theta_\delta}2\langle t\rangle^{\gamma-\delta}\right)\langle k-l\rangle^{-\sigma}\]
and thus the contribution is bounded as claimed after exchanging summation index and otherwise arguing as in Case 1a).\\
\item For $l\neq k$ and $s\in [\frac{t}2,t]$, we have
\begin{equation}\label{eq:2b3-aux}
\lvert k(t-s)\rvert\geq s\lvert k-l\vert -\lvert kt-ls\rvert\geq \frac{t}2\lvert k-l\rvert-\frac{t}4\lvert k-l\rvert=\frac{t}4\lvert k-l\rvert
\end{equation}
and thus for some $\tilde{\theta}_\delta>0$ and again using $\langle k\rangle\,\langle t\rangle\lesssim\langle k,kt\rangle$,
\begin{align*}
\exp((z(t)-z(s))\langle k,kt\rangle^\gamma)\leq&\,\exp\left(-\theta_0\langle t\rangle^{-1-\delta}\,\lvert k(t-s)\rvert \cdot \lvert k\rvert^{-1}\langle k,kt\rangle^{\gamma}\right)\\
\leq&\,\exp\left(-\theta_{\delta}\langle t\rangle^{\gamma-\delta}\frac{\lvert l-k\rvert}{\lvert k\rvert^{1-\gamma}}\right)\,.
\end{align*}

We make one final case distinction.\\

\begin{enumerate}
\item If $\lvert l-k\rvert\geq\frac12\lvert k\rvert$, then, returning to \eqref{eq:Ckl-first-est-c2}, there exists some appropriate $\tilde{\theta}_\delta>0$ such that $C_{k,l}$ is bounded by
\[C_{k,l}(t,s)\lesssim\,\left(\lvert k\rvert\langle t\rangle \exp(-2\tilde{\theta}_\delta\,\langle t\rangle^{\gamma-\delta}\lvert k\rvert^\gamma)\right)\,\cdot\langle k-l\rangle^{-\sigma}\lesssim_{\gamma,\delta} \exp\left(-\tilde{\theta}_\delta \langle t\rangle^{\gamma-\delta}\right)\,\langle k-l\rangle^{-\sigma}\]
using \eqref{eq:tool}, and this bound can again be treated as in Case 2a), (1).\\

\item This leaves the regime where $\lvert l-k\rvert\leq\frac12\lvert k\rvert$, and consequently 
\begin{align*}
\lvert l\vert\geq&\,\lvert k\rvert-\lvert l-k\rvert\,\geq \frac12\lvert k\rvert\geq 3\lvert l-k\rvert\,,\\
\lvert l\rvert\leq&\,\lvert k\rvert+\lvert l-k\rvert\leq \frac32\lvert k\rvert\,.
\end{align*}
In particular, $\left(\frac{\lvert k\rvert}{\lvert l\rvert}\right)^{1-\alpha}\lesssim_\alpha 1$ follows. Thus, applying \eqref{eq:2b3-aux} to \eqref{eq:Ckl-first-est-c2}, 
\begin{align*}
&\,\langle t\rangle^{\beta^\prime}\,C_{k,l}(t,s)\,\langle s\rangle^{\beta-\eta}\\
\lesssim&\,\langle t\rangle^{\beta+\beta^\prime}\,(t-s)\langle k-l,kt-ls\rangle^{-\sigma}\,\exp\left(-\tilde{\theta}_{\delta}\langle t\rangle^{\gamma-\delta}\frac{\lvert k-l\rvert}{\lvert k\rvert^{1-\gamma}}\right)\,\langle t\rangle^{-\eta}\\
\lesssim&\,\frac{\lvert k-l\rvert}{\lvert k\rvert} \langle k-l,kt-ls\rangle^{-\sigma}\,\langle t\rangle^{-\eta}\,\left[\langle t\rangle^{1+\beta+\beta^\prime}\exp\left(-\tilde{\theta}_\delta \lvert l-k\rvert\lvert k\rvert^{\gamma-1}\langle t\rangle^{\gamma-\delta}\right)\right]\,.
\end{align*}
Again using \eqref{eq:tool} to bound the final factor uniformly in $\langle t\rangle$, we obtain
\begin{align*}
&\,\quad\langle t\rangle^{\beta^\prime}\,C_{k,l}(t,s)\,\langle s\rangle^{-\eta+\beta}\\
&\,\lesssim_{\beta,\beta^\prime,\gamma,\delta} \frac{\lvert k-l\rvert}{\lvert k\rvert}\,\langle k-l,kt-ls\rangle^{-\sigma}\,\lvert k-l\rvert^{-\frac{1+\beta+\beta^\prime}{\gamma-\delta}}\,\lvert k\rvert^{-\frac{(\gamma-1)\left(1+\beta+\beta^\prime\right)}{\gamma-\delta}}\,\langle t\rangle^{-\eta}\\
&\,\lesssim_{\delta,\gamma}\left(\langle k-l,kt-ls\rangle^{-\sigma}\lvert l\rvert\right)\left(\lvert k-l\rvert^{1-\frac{1+\beta+\beta^\prime}{\gamma-\delta}}\lvert l\rvert^{-1}\lvert k\rvert^{\frac{(1-\gamma)\left(1+\beta+\beta^\prime\right)}{\gamma-\delta}-1}\right)\langle t\rangle^{-\eta}\numberthis\label{eq:interim-worst-case}\,.
\end{align*}
Regarding the first factor, we now bound its integral over $s\in[\frac{t}2,t]$: Writing
\begin{align*}
\Delta_{k,l}(t):=\sqrt{1+\lvert k-l\rvert^2+\frac{\lvert k\times l\rvert^2}{\lvert l\rvert^2}t^2}\geq \langle k-l\rangle\geq \sqrt{2}\,,
\end{align*}
one obtains
\begin{align*}
&\int_{\frac{t}2}^t \langle k-l,kt-ls\rangle^{-\sigma}\,\lvert l\rvert\,ds\\
=&\,\int_{\frac{t}2}^t \left[1+\lvert k-l\rvert^2+\lvert kt\rvert^2-2 (k\cdot l)ts+\lvert ls\rvert^2\right]^{-\frac{\sigma}2}\,\lvert l\rvert\,ds\\
=&\,\int_{\frac{t}2}^t\,\left[\left(\lvert l\rvert s-\frac{k\cdot l}{\lvert k\rvert}t\right)^2+1+\lvert k-l\rvert^2+t^2\left(\lvert k\rvert^2-\frac{(k\cdot l)^2}{\lvert l\rvert^2}\right)\right]^{-\frac{\sigma}2}\,\lvert l\rvert\,ds\\
=&\,\int_{\left(\frac{\lvert l\rvert}2-\frac{k\cdot l}{\lvert l\rvert}\right)t}^{(\lvert l\rvert-\frac{k\cdot l}{\lvert l\rvert})t}\left[\tilde{s}^2+{\Delta_{k,l}(t)}^2\right]^{-\frac{\sigma}2}\,d\tilde{s}\\
=&\,\left(\Delta_{k,l}(t)\right)^{-\sigma+1}\int_{\left(\frac{\lvert l\rvert}2-\frac{k\cdot l}{\lvert l\rvert}\right)\frac{t}{\Delta_{k,l}(t)}}^{(\lvert l\rvert-\frac{k\cdot l}{\lvert l\rvert})\frac{t}{\Delta_{k,l}(t)}} \left[1+(s^\prime)^2\right]^{-\frac{\sigma}2}\,ds^\prime\\
\leq&\,\left(\Delta_{k,l}(t)\right)^{-\sigma+1}\,\int_{-\infty}^\infty \langle s^\prime\rangle^{-\sigma}\,ds^\prime\,.
\end{align*}
As $\sigma>4$, the integral on the right hand side is finite, while $\lvert k-l\rvert\geq1$ implies
\begin{align*}
\Delta_{k,l}(t)^{-\sigma+1}\lesssim_{\sigma} \langle k-l\rangle^{-\sigma+1}\,.
\end{align*}
For the second bracketed expression in \eqref{eq:interim-worst-case}, recall that $\lvert l\rvert\simeq \lvert k\rvert$ holds in this regime, and consequently
\[\lvert k\rvert^{\frac{(1-\gamma)(1+\beta+\beta^\prime)}{\gamma-\delta}-1}\,\lvert l\rvert^{-1}\lesssim \lvert k\rvert^{-2+\frac{(1-\gamma)(1+\beta+\beta^\prime)}{\gamma-\delta}}\,.\]
Altogether, we can bound the contribution of this case to \eqref{eq:I1} by
\[\lesssim \langle t\rangle^{-\eta}\,\left(\sup_{k\in \Z^3\backslash\{0\}}\lvert k\rvert^{-2+\frac{(1-\gamma)(1+\beta+\beta^\prime)}{\gamma-\delta}}\right)\,\left(\sum_{l\in\Z^3\backslash\{0\}}\langle k-l\rangle^{-\sigma+2-\frac{1+\beta+\beta^\prime}{\gamma-\delta}}\right)\,.\]
The final factor is finite for appropriately small $\delta>0$, relative to terms depending only on parameters of regularity and expansion, since we have
\[-\sigma+2-\frac{1+\beta+\beta^\prime}{\gamma}\leq -\sigma+1<3\,.\]
Meanwhile, the supremum is finite since
\[\gamma>1-\frac{2}{3+\beta+\beta^\prime}=\frac{1+\beta+\beta^\prime}{3+\beta+\beta^\prime}\]
implies
\[{(1-\gamma)(1+\beta+\beta^\prime)}<2\gamma\,,\]
and thus, again for sufficiently small $\delta>0$,
\[\frac{(1-\gamma)(1+\beta+\beta^\prime)}{\gamma-\delta}<2\,.\]
\end{enumerate}
\end{enumerate}
\end{proof}
Altogether, we note that the contributions from the final case and from Case 1b) (with its analogues) are the main obstacles forcing the Gevrey-regularity restrictions above, and subsequently in Theorem \ref{thm:main}.

\subsection{Closing the bootstrap argument}\label{subsec:close-bs}

Finally, we show how to use these tools to obtain Corollary \ref{cor:F-to-G-intro}, which closes the bootstrap argument, as well as how this implies Theorem \ref{thm:main}.

\begin{corollary}[Improved pointwise Gevrey bound]\label{cor:gevrey-imp} Let 
\[\gamma\in\left(1-\frac{2}{3+\beta+\beta^\prime},1\right],\quad \alpha\in\left(\frac13,1\right],\quad \sigma>\max\{4,2+\beta+\beta^\prime\}\,.\]
Furthermore, let $\beta^\prime=\frac32\beta$ and $\eta=\frac52+\beta$, and choose $\delta\in(0,\frac12)$ to be sufficiently small depending only on $\alpha,\beta,\gamma$ and $\sigma$. 
%\[\gamma\in\left(1-\frac{3}{6+2\beta},1\right),\quad \alpha\in\left(\frac13,1\right],\quad \sigma>\max\{4,2+\beta\}\]
Then, for any $\tau\in[0,\tau_{Boot}]$ and sufficiently small $\epsilon>0$, one has
\begin{equation}\label{eq:gevrey-imp}
\tilde{F}[\rho](\tau)\lesssim \epsilon\langle\tau\rangle^{-\eta}\,.
\end{equation}
\end{corollary}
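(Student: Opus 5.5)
The plan is to combine the linear estimate \eqref{eq:generator-bd-slide} of Corollary \ref{cor:lin-damping-gevrey-concrete} with the source bound \eqref{eq:pw-est-source} of Lemma \ref{lem:pw-est-source}, and to close the result as a Gronwall-type continuity argument for the weighted supremum
\[\mathcal{M}(\tau):=\sup_{0\leq\tilde{\tau}\leq\tau}\langle\tilde{\tau}\rangle^{\eta}\tilde{F}[\rho](\tilde{\tau})\,,\qquad \eta=\tfrac52+\beta\,.\]
The parameter choices in the statement are exactly those of Lemma \ref{lem:ckl-pw}, so Lemma \ref{lem:pw-est-source} is available on $[0,\tau_{Boot}]$ once $\delta$ is small depending on $\alpha,\beta,\gamma,\sigma$. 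The case $\alpha=1$, allowed here but excluded in Lemma \ref{lem:ckl-pw}, needs a separate remark: either take $\alpha$ slightly below $1$, since $\lvert k\rvert^{-\alpha}$ only enlarges $F$, or check that Cases 1b/2a(2) still work with $\lvert t-s\rvert^{1}$ integrable.

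First, insert \eqref{eq:pw-est-source} into \eqref{eq:generator-bd-slide}. For the boundary term, \eqref{eq:pw-est-source} gives
\[\langle\tau\rangle^\eta\tilde{F}[S](\tau)\leq\langle\tau\rangle^{-\beta'}\bigl(C\epsilon e^{-\frac{\lambda_1}{3}\langle\tau\rangle^\gamma}+C\sqrt{C_0\epsilon}\,\mathcal{M}(\tau)\bigr)\,.\]
For the integral term, the weight $\langle\tilde\tau\rangle^{\beta'}\langle\tilde\tau\rangle^\eta\tilde F[S](\tilde\tau)$ is exactly the left side of \eqref{eq:pw-est-source} at time $\tilde\tau$. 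This is the reason the source lemma was stated with the extra $\langle\tau\rangle^{\beta'}$: it absorbs the loss in the resolvent bound. The integral is therefore bounded by
\[\int_0^\tau e^{-\theta_1(\tau-\tilde\tau)}\bigl(\epsilon e^{-\frac{\lambda_1}{3}\langle\tilde\tau\rangle^\gamma}+\sqrt{C_0\epsilon}\,\mathcal{M}(\tilde\tau)\bigr)\,d\tilde\tau\lesssim \epsilon+\sqrt{C_0\epsilon}\,\mathcal{M}(\tau)\,,\]
using monotonicity of $\mathcal{M}$ and $\int_0^\infty e^{-\theta_1 s}\,ds<\infty$.

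Taking the supremum over $[0,\tau]$ then yields
\[\mathcal{M}(\tau)\leq C\epsilon+C\sqrt{C_0\epsilon}\,\mathcal{M}(\tau)\,.\]
Choosing $\epsilon$ small enough that $C\sqrt{C_0\epsilon}\leq\frac12$ gives $\mathcal{M}(\tau)\leq 2C\epsilon$, which is \eqref{eq:gevrey-imp}. This absorption is legitimate because $\mathcal{M}(\tau)$ is finite for each $\tau<\tau_{Boot}$: Lemma \ref{lem:prelim} (with $\alpha\geq1$ or trivially adapted) and the bootstrap assumption \eqref{eq:boot} give $\tilde F[\rho]\lesssim\sqrt{\epsilon}$, so $\mathcal{M}(\tau)\lesssim\langle\tau\rangle^\eta\sqrt\epsilon<\infty$. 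Continuity in $\tau$, from Lemma \ref{lem:lwp}, makes the argument rigorous up to $\tau_{Boot}$.

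The main obstacle is not this absorption, which is routine, but making sure the exponents line up. The loss $\langle\tilde\tau\rangle^{\beta'}$ from Proposition \ref{prop:res-bd} must be exactly compensated by the gain in Lemma \ref{lem:pw-est-source}, which in turn relies on Lemma \ref{lem:ckl-pw} holding for $\eta=\frac52+\beta$. I would double-check that Lemma \ref{lem:ckl-pw} imposes no upper constraint on $\eta$ (Case 1a needs $\int_0^{t/2}\langle s\rangle^{-\sigma+\beta-\eta}\,ds$ finite, which only improves with $\eta$), and that the implicit constants are independent of $C_0$ and $\epsilon$. The choice $\eta=\frac52+\beta$ is then made so that $a(T(\tau))\tilde F[\rho]\lesssim\epsilon\langle\tau\rangle^{\beta-\eta}=\epsilon\langle\tau\rangle^{-5/2}$. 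Since $\delta<\frac12$, this gives \eqref{eq:F-cond} with room to spare and feeds directly into Corollary \ref{cor:F-to-G}.
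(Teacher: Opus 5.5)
Your proposal is correct and follows the paper's proof essentially step for step. You insert \eqref{eq:pw-est-source} into \eqref{eq:generator-bd-slide}, noting that the weight $\langle\tilde\tau\rangle^{\beta'}$ from the resolvent is exactly the extra weight in the source bound. You then integrate the exponential kernel and absorb the $\sqrt{C_0\epsilon}\,\mathcal{M}(\tau)$ term for small $\epsilon$.

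Your two extra points are both valid and make explicit what the paper leaves implicit. First, finiteness of $\mathcal{M}(\tau)$ via Lemma \ref{lem:prelim} and the bootstrap assumption is what justifies the absorption. Second, the case $\alpha=1$ reduces to $\alpha<1$ because $\lvert k\rvert^{-\alpha}$ is decreasing in $\alpha$ for $k\neq0$, which covers the mismatch with the range $\alpha\in(\frac13,1)$ in Lemma \ref{lem:ckl-pw}.
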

\begin{proof}
By Corollary \ref{cor:lin-damping-gevrey-concrete}, there exists a constant $\theta_1^\prime>0$ such that
\begin{align*}
\langle\tau\rangle^\eta\,\tilde{F}[\rho](\tau)\lesssim&\, \langle\tau\rangle^\eta\,\tilde{F}[S](\tau)+\int_0^\tau \exp\left(-\theta_1^\prime\,\lvert k\rvert\,(\tau-\tilde{\tau})\right)\,\langle\tilde{\tau}\rangle^{\eta+\beta^\prime}\,\tilde{F}[S](\tilde{\tau})\,d\tilde{\tau}\,.
\end{align*}
%Under these parameter choices, we can first insert Corollary \ref{cor:L2-decay-F} into Corollary \ref{cor:l2-pw-volterra-intro} and obtain
%\begin{equation*}
%\langle \tau\rangle^{\eta}\tilde{F}[\rho](\tau)\leq\langle\tau\rangle^{\eta}\tilde{F}[S](\tau)+C\epsilon\,.
%\end{equation*}
Estimating the source terms on the right hand side with \eqref{eq:pw-est-source}, this becomes
\begin{align*}
\langle\tau\rangle^\eta\,\tilde{F}[\rho](\tau)\lesssim&\,\epsilon\,e^{-\frac{\lambda_1}3\langle \tau\rangle^\gamma}+\int_0^\tau \epsilon\,e^{-\frac{\lambda_1}3\langle \tilde{\tau}\rangle^\gamma-\theta_1^\prime\lvert k\rvert(\tau-\tilde{\tau})}\,d\tilde{\tau}\\
&\,+\sqrt{C_0}\,\sqrt{\epsilon}\,\left(\sup_{0\leq s\leq\tau}\langle s\rangle^\eta\,\tilde{F}[\rho](s)\right)+\int_0^\tau e^{-\theta_1^\prime\lvert k\rvert(\tau-\tilde{\tau})}\,\sup_{0\leq s\leq\tilde{\tau}}\left(\langle s\rangle^\eta\,\tilde{F}[\rho](s)\right)\,d\tilde{\tau}\\
\lesssim&\,\left(\epsilon+\sqrt{C_0}\sqrt{\epsilon}\,\sup_{0\leq s\leq\tau}\left(\langle s\rangle^\eta\,\tilde{F}[\rho](s)\right)\right)\left(1+\int_0^\tau e^{-\theta_1^\prime\lvert k\rvert s}\,ds\right)\\
\lesssim&\,\epsilon+\sqrt{C_0}\sqrt{\epsilon}\,\sup_{0\leq s\leq\tau}\left(\langle s\rangle^\eta\,\tilde{F}[\rho](s)\right)\,.
\end{align*}
%\begin{align*}
%\langle\tau\rangle^{\eta}\tilde{F}[\rho](\tau)\lesssim&\,\epsilon+\epsilon\langle\tau\rangle^{\eta}\exp\left(-\frac{\lambda_0}3\langle\tau\rangle^\gamma\right)+\sqrt{C_0}\sqrt{\epsilon}\left(\max_{s\in[0,\tau_{Boot}]}\langle \tau\rangle^{\eta}\tilde{F}[\rho](s)\right)\,.
%\end{align*}
In other words, there exists some $C>0$ such that
\begin{align*}
(1-C\sqrt{C_0}\sqrt{\epsilon})\left(\max_{s\in[0,\tau_{Boot}]}\langle s\rangle^{\eta}\tilde{F}[\rho](s)\right)\leq C\epsilon
\end{align*}
holds. This implies the statement for sufficiently small $\epsilon>0$.
\end{proof}
%\begin{corollary}[Bootstrap improvement] Under the conditions of Corollary \ref{cor:gevrey-imp}, for any $\tau\in[0,\tau_{Boot}]$, one has
%\begin{equation}\label{eq:imp-G-decay}
%\del_\tau\sqrt{\tilde{G}[h](\tau)}\lesssim{\epsilon}\langle\tau\rangle^{-2-\delta}\,.
%\end{equation}
%In particular, if $\epsilon>0$ is chosen small enough, this implies
%\begin{equation}\label{eq:bs-imp-G}
%\sqrt{\tilde{G}[h](\tau)}\leq \sqrt{(1+C_0)\epsilon}
%\end{equation}
%\end{corollary}
\begin{proof}[Proof of Corollary \ref{cor:F-to-G-intro}]
The estimate \eqref{eq:delt-G-imp-intro} follows by applying Corollary \ref{cor:gevrey-imp} for $\alpha\in(\frac13,1]$ to Corollary \ref{cor:F-to-G}, where the choice of $\eta$ ensures that \eqref{eq:F-cond} is satisfied. Integrating \eqref{eq:delt-G-imp-intro} and inserting the initial data condition \eqref{eq:ass-init} yields that, for some $C>0$, one has
\[\sqrt{\tilde{G}[h](\tau)}\leq\sqrt{{G}[h](0,\lambda_0)}+C\epsilon\int_0^\infty\langle s\rangle^{-2-\delta}\,ds\leq\sqrt{\epsilon}\left[\sqrt{\epsilon}+C\sqrt{\epsilon}\int_0^\infty\langle s\rangle^{-2}\,ds\right]\,,\]
where the bracketed expression is smaller than $\sqrt{1+C_0}$ for $\epsilon>0$ small enough compared to $\delta$ and $C_0$.
\end{proof}
\begin{proof}[Proof of Theorem \ref{thm:main}]
Note that \eqref{eq:G-imp-intro} strictly improves upon the bootstrap assumption \eqref{eq:boot}. Invoking the continuation criterion in Lemma \ref{lem:lwp} for sufficiently small $\epsilon>0$, the bootstrap assumption hence must hold on $[0,\tau^\prime]$ for some sufficiently close $\tau^\prime>\tau_{Boot}$ by continuity. Thus, \eqref{eq:G-imp-intro} holds globally, and so does \eqref{eq:gevrey-imp}.\\

Regarding \eqref{eq:main-rho}, where the Fourier multiplier $A(z(\tau))_{k,k\tau}$ and sliding generator function $\tilde{F}[\rho]$ are to be understood with parameter choices for $\gamma,\,\sigma$ and $\alpha$ as in Corollary \ref{cor:gevrey-imp}, one has for any $\lambda^\prime\in(0,\lambda_1)$ that
\begin{align*}
\|\rho(\tau,\cdot)\|^2_{\mathcal{G}^{\lambda^\prime,\gamma}(\T^3)}=&\,\sum_{k\in\Z^3\backslash\{0\}}\exp(2\lambda^\prime\langle k\rangle^\gamma)\,\lvert \hat{\rho}_k(\tau)\rvert^2\\
\lesssim&\,\sum_{k\in\Z^3\backslash\{0\}} \left(A(\lambda^\prime)_{k,k\tau}^2\,\lvert \hat{\rho}_k(\tau)\rvert^2\right)\,\langle k\rangle^{-2\sigma}\\
\lesssim&\,\exp(-2(\lambda_1-\lambda^\prime)\langle\tau\rangle^\gamma)\sum_{k\in\Z^3\backslash\{0\}} \Big[\lvert k\rvert^{-2\alpha}A(z(\tau))_{k,k\tau}^2\,\lvert \hat{\rho}_k(\tau)\rvert^2\Big]\,\langle k\rangle^{-2\sigma+2\alpha}\\
\lesssim&\,\exp(-2(\lambda_1-\lambda^\prime)\langle\tau\rangle^\gamma)\,\tilde{F}[\rho](\tau)^2
%\lesssim&\,\epsilon^2\,\langle\tau\rangle^{-4}\exp(-2(\lambda_1-\lambda^\prime)\langle\tau\rangle^\gamma)\,,
\end{align*}
with the final step since we have $2\sigma-2\alpha>8-2=6>3$. The claim now follows by applying the improved bound \eqref{eq:gevrey-imp}.\\

\noindent Regarding \eqref{eq:main-g}, for $(x,v)\in\T^3\times\R^3$, we formally wish to define
\begin{equation*}
h_\infty(x,v)=\lim_{\tau\to\infty}\left[h(0,x,v)+\int_0^\tau (\del_\tau h)(\tilde{\tau},x,v)\,d\tilde{\tau}\,\right]\,.
\end{equation*}
The bracketed expression is clearly well-defined for any $\tau>0$, and to show that $h_\infty$ is well-defined and an element of $\mathcal{G}^{\lambda_1,\gamma}(\T^3\times\R^3)$, it is sufficient to show
\begin{equation}\label{eq:main-g-step}
\sum_{k\in\Z^3\backslash\{0\}}\int_{\R^3}\exp\left(2\lambda_1\langle k,\xi\rangle^\gamma\right)\,\lvert (\del_\tau\hat{h})(\tau,k,\xi)\rvert^2\,d\xi\lesssim \epsilon^2\,\langle \tau\rangle^{-2}\,.
\end{equation}
To this end, we consider \eqref{eq:landau-fourier} and start by dealing with contributions to the left hand side of \eqref{eq:main-g-step} from the term in the second line of \eqref{eq:landau-fourier}. These can be estimated as follows, with parameters, Fourier multipliers and generator functions to be understood as above.
\begin{align*}
\lesssim&\,a(T(\tau))^2\sum_{k\in\Z^3\backslash\{0\}}\int_{\R^3}\left[\sum_{l\in\Z^3}\exp(2\lambda_1\langle k,\xi\rangle^\gamma)\,\left(\frac{\lvert\xi-l\tau\rvert}{\lvert l\rvert}\right)^2\lvert \hat{\rho}_l(\tau)\rvert^2\,\left\lvert \hat{h}(\tau,k-l,\xi-l\tau)\right\rvert^2\right]\,d\xi\\
\lesssim&\,a(T(\tau))^2\sum_{k,l\in\Z^3}\left[A(\lambda_1)_{l,l\tau}^2\langle l,l\tau\rangle^{-2\sigma}\lvert\hat{\rho}_l(\tau)\rvert^2\right]\,\cdot\\
&\,\quad\cdot\int_{\R^3}\left[A(\lambda_1)_{k-l,\xi-l\tau}^2\lvert\hat{h}(\tau,k-l,\xi-l\tau)\rvert^2\frac{\lvert \xi-l\tau\rvert^2+\langle\tau\rangle^2\lvert k-l\rvert^2}{\langle k-l,\xi-l\tau\rangle^{2\sigma}}\right]\,\,d\xi\\
\lesssim&\,a(T(\tau))^2\langle\tau\rangle^{-2\sigma}{F}[\rho](\tau,\lambda_1)^2\,\sum_{k,l\in\Z^3\backslash\{0\}}\langle l\rangle^{-2(\sigma-\alpha)}\int_{\R^3}  \langle\tau\rangle^2\,A(\lambda_1)_{k-l,\tilde{\xi}}^2\,\left\lvert \hat{h}\left(\tau,k-l,\tilde{\xi}\right)\right\rvert^2\,d\tilde{\xi}\\
\lesssim&\,\langle\tau\rangle^{-2(\sigma+1)}\,\left(a(T(\tau))\tilde{F}[\rho)(\tau)\right)^2\left(\sum_{{l}\in\Z^3}\langle {l}\rangle^{-2(\sigma-\alpha)}\right)\,\left(\sum_{\tilde{k}\in\Z^3} \int_{\R^3}A(\lambda_1)_{\tilde{k},\tilde{\xi}}^2\,\left\lvert \hat{h}\left(\tau,\tilde{k},\tilde{\xi}\right)\right\rvert^2\,d\xi\right)\\
\lesssim&\,\langle\tau\rangle^{-2(\sigma+1)}\,\left(a(T(\tau))\,\tilde{F}[\rho](\tau)\right)^2\,\tilde{G}[h](\tau)\,.\\
%\lesssim&\,\epsilon^3\langle\tau\rangle^{-2\sigma-2}
\end{align*}
In particular, by \eqref{eq:G-imp-intro} and \eqref{eq:gevrey-imp}, recalling $\eta>2+\beta$ and Assumption \ref{ass:scale}, the contribution is bounded by
\[\lesssim\epsilon^3\langle\tau\rangle^{-2\sigma-2}\,.\]
By an analogous argument, the other contribution from \eqref{eq:landau-fourier} can be bounded by
\begin{align*}
\lesssim&\,\langle\tau\rangle^{-2\sigma}\left(a(T(\tau))\tilde{F}[\rho](\tau)\right)^2\|\mu\|_{\mathcal{G}^{\lambda_1,\gamma}(\R^3)}^2\lesssim\epsilon^2\langle\tau\rangle^{-2\sigma-2}\,.
\end{align*}
In combination, this implies \eqref{eq:main-g-step} and thus $h_\infty\in\mathcal{G}^{\lambda_1,\gamma}(\T^3\times\R^3)$. Furthermore, by estimating the density modes as in the proof of \eqref{eq:main-rho} and otherwise proceeding as above, one gets
\[\|(\del_\tau\hat{h})(\tau,\cdot,\cdot)\|_{\mathcal{G}^{\lambda^\prime,\gamma}(\T^3\times\R^3)}\lesssim\epsilon\,\langle \tau\rangle^{-\sigma-1}\,e^{-(\lambda_1-\lambda^\prime)\langle\tau\rangle^{\gamma}}\]
Finally, this implies
\begin{align*}
\|h(\tau,\cdot,\cdot)-h_\infty\|_{\mathcal{G}^{\lambda^\prime,\gamma}(\T^3\times\R^3)}\lesssim&\,\int_\tau^\infty \|(\del_\tau h)(\tilde{\tau},\cdot,\cdot)\|_{\mathcal{G}^{\lambda^\prime,\gamma}(\T^3\times\R^3)}\,d\tilde{\tau}\\
\lesssim&\,\epsilon\,e^{-(\lambda_1-\lambda^\prime)\langle\tau\rangle^\gamma}\int_\tau^\infty \langle\tilde{\tau}\rangle^{-\sigma-1}\,d\tilde{\tau}\\
\lesssim&\,\epsilon\,e^{-(\lambda_1-\lambda^\prime)\langle\tau\rangle^\gamma}
\end{align*}
which is \eqref{eq:main-g}.
\end{proof}

\appendix
\renewcommand*{\theequation}{\Alph{section}.\arabic{equation}}

\section{Deriving cosmological Vlasov-Poisson equations from a classical system}\label{sec:newt-cosmo}

In this section, we outline how Newtonian cosmological models for Vlasov matter arise in general dimensions. This follows the standard approach as in, for example, \cite[Chapter II, Sections 7ff.]{Pee80} and \cite[Section 2]{RR94}, additionally coupled to an external field driving expansion. We start from the classical Vlasov-Poisson system on $I\times \R^d\times\R^d$ for $d\geq 1$ coupled to an external field $\tilde{\Phi}_b:I\times \R^d\longrightarrow\R$, using a standard Euclidean frame $(t,(r^i)_{i=1,\dots,d},(p^i)_{i=1,\dots,d})$.

\begin{subequations}
\begin{gather}
\label{eq:vlasov-canonical}\del_t\tilde{f}(t,r,p)+p^i(\del_{r^i}\tilde{f})(t,r,p)-\delta^{ij}\left(\del_{r^i}\tilde{\Phi}_b(t,r)+\del_{r^i}\tilde{U}(t,r)\right)(\del_{p^j}\tilde{f})(t,r,p)=0\,,\\
\label{eq:poisson-canonical}\delta^{ij}\del_{r^i}\del_{r^j}\tilde{U}(t,r)=4\pi\,\epsilon_F\,{\bm \varrho}(t,r),\quad {\bm \varrho}(t,r)=\int_{\R^d}\tilde{f}(t,r,p)\,dp\,.
\end{gather}
\end{subequations}

We are now interested in studying solutions in coordinates that are comoving with respect to an expanding universe, with expansion given by the scale factor $a:I\longrightarrow(0,\infty)$. Thus, we want to write this system with respect to spatial coordinates $(x^i)_{i=1,\dots,d}$ such that
\[r(t,x)=a(t)x\,.\]
More generally, for a particle moving along the trajectory $\gamma$ in $(t,x)$-coordinates, it moves along the trajectory $\tilde{\gamma}$ in $(t,r)$ coordinates given by
\[\tilde{\gamma}(t)=a(t)\gamma(t)\]
which has proper velocity
\[\del_t\tilde{\gamma}(t)=\del_t{a}(t)\,\gamma(t)+a(t)\,\del_t{\gamma}(t)\,.\]
Note that the velocity measured by an observer moving along $\gamma$ consists only of the second term, referred to as the \textit{proper peculiar velocity}, while the first term measures expansion due to the background. In accordance with this, we choose velocity coordinates
\[v(t,\gamma)=p(t,\tilde{\gamma})-{\del_ta(t)}\,\gamma(t)\,,\]
where $p(t,\tilde{\gamma})$ denotes the \textit{proper velocity} with respect to the associated curve $\tilde{\gamma}$ in standard coordinates. 
In summary, the natural comoving coordinates $(t,(x^i)_{i=1,\dots,d},(v^i)_{i=1,\dots,d})$ on $I\times \R^d\times\R^d$ are related to the Euclidean frame by
\[r^i(t,x)=a(t)x^i,\quad p^i(t,x,v)=v^i+\dot{a}(t)x^i\,.\]
We rewrite the solution variables $(\tilde{f},\tilde{U},{\bm \varrho})$ in terms of these coordinates:
\begin{align*}
f(t,x,v)=&\,\tilde{f}(t,r(t,x),p(t,x,v))\,,\\
{U}(t,x)=&\,\tilde{U}(t,r(t,x))\,,\\
\Phi_b(t,x)=&\,\tilde{\Phi}_b(t,r(t,x))\,,\\
{\underline{{\bm \rho}}}(t,x)=&\,{\bm \varrho}(t,r(t,x))\,.
\end{align*}
From here on out, we will suppress the dependency of $(t,r,p)$ on $(t,x,v)$ in notation, and denote the derivatives with respect to the slots in $t,\,r^i$ and $p^j$ by $\del_0,\del_i$ and $\del_{d+j}$ respectively, with terms of the form $p^j\del_{d+j}$ to be interpreted in terms of the standard Einstein summation convention with respect to the repeated index $j=1,\dots,d$.\\

Note that one has
\begin{gather*}
\del_{x^i}f=\frac{\del r^j}{\del x^i}\cdot\del_{j}\tilde{f}+\frac{\del p^j}{\del x^i}\cdot\del_{d+j}\tilde{f}=a\,\del_i\tilde{f}+\dot{a}\,\del_{d+i}\tilde{f}\\
\del_{v^i}f=\frac{\del p^j}{\del v^i}\cdot\del_{d+j}\tilde{f}=\del_{d+i}\tilde{f}
\end{gather*}
and consequently also
\begin{gather*}
\del_i\tilde{f}=a^{-1}\,\del_{x^i}f-\frac{\dot{a}}a\,\del_{v^i}f\,.
\end{gather*}
One computes:
\begin{subequations}
\begin{align*}
\numberthis\label{eq:landau-resc-with-bg}(\del_tf)(t,x,v)=&\,(\del_0\tilde{f})(t,r,p)+\dot{a}(t)\,x^i\,(\del_{i}\tilde{f})(t,r,p)+(\ddot{a}(t)\,x^i)\,(\del_{d+i}\tilde{f})(t,r,p)\\
=&\,-(v^i+\dot{a}(t)\,x^i)\,(\del_i\tilde{f})(t,r,p)+\left((\del_{i}\tilde{U})(t,r)+(\del_i\tilde{\Phi}_b)(t,r)\right)\,(\del_{d+i}\tilde{f})(t,r,p)\\
&\,+\dot{a}(t)\,x^i\,(\del_{i}\tilde{f})(t,r,p)+\ddot{a}(t)\,x^i\,(\del_{d+i}\tilde{f})(t,r,p)\\
=&\,-a(t)^{-1}\,v^i(\del_{x^i}f)(t,x,v)+\frac{\dot{a}(t)}{a(t)}\,v^i\,(\del_{v^i}f)(t,x,v)+a(t)^{-1}\,\delta^{ij}\,(\del_{r^i} U)(t,x)\,(\del_{v^j}f)(t,x,v)\\
&\,+\left(a(t)^{-1}\,(\del_{x^i}{\Phi_b})(t,x)+\ddot{a}(t)\,x^i\right)(\del_{v^i}{f})(t,x,v)\,.%\numberthis\label{eq:vlasov-eq-deriv}
\end{align*}
Similarly,
\begin{align}
\label{eq:poisson-with-bg}\delta^{ij}\,(\del_{x^i}\del_{x^j}U)(t,x)=&\,a(t)^{2}\,\delta^{ij}\,(\del_{i}\del_{j}\tilde{U})(t,r)=4\pi\,a(t)^2\,\epsilon_F\,{\bm \varrho}(t,r)=4\pi\,a(t)^2\,\epsilon_F\,\underline{{\bm \rho}}(t,x)\,,\\
\label{eq:density-with-bg}\underline{{\bm \rho}}(t,x)=&\,\int_{\R^d}\tilde{f}(t,r,p)\,dp=\int_{\R^d}\tilde{f}(t,r,p(t,x,v))\,dv=\int_{\R^d}f(t,x,v)\,dv\,.
\end{align}
\end{subequations}

What we have ignored so far is that we are interested in deviations from a matter distribution $f_0$ that is homogeneous and isotropic \textbf{from the perspective of a comoving observer} and how it influences the expansion rate via its mass density ${\bm \rho}_0(t)$, which must be spatially constant and which we want to preserve in the spatial average.\\
%Note that the Lagrangian of the particle motion is given by
%\[\mathcal{L}=\frac12 p(t,\gamma(t))^2-(\Phi_b+{U})(t,\gamma(t))\,,\]
%which is related via canonical transformation to the Lagrangian 
%\[\mathcal{L}^\prime=\frac12 a(t)^2\,\dot{\gamma}(t)^2-(\Phi_b+U-U_0)(t,\gamma(t)),\quad U_0(t,x)=\frac12\,a(t)\ddot{a}(t)\lvert x\rvert^2\,.\]
%Thus, particles are subject to the equations of motion
%\[{\bm p}(t)=a(t)^2\dot{\gamma}(t)=a(t)\,v(t,\gamma),\quad {\bm \dot{p}}(t)=-\nabla(\Phi_b+U-U_0)(t,x)\]
%In particular, a freely moving particle satisfies ${\bm p}=\text{const.}$, and its peculiar velocity decays as $a(t)^{-1}$. \\

We note that solutions to the Vlasov equation \eqref{eq:landau-resc-with-bg} must, at least locally, be of the form
\[f(t,x,v)=\mathring{f}(\gamma_{x,v}(t),{\bm v}_{x,v}(t))\]
where $(\gamma,{\bm v})_{x,v}$ describe the position $\gamma_{x,v}(t)$ and proper peculiar velocity ${\bm v}_{x,v}(t)$ of a particle at time $t$, emanating from position $x$ with initial proper peculiar velocity $v$ at $t=t_0$, and $\mathring{f}$ describes the particle distribution at $t=t_0$. The associated particle distribution measure is given by
\[dN=f(t,x,v)\bigwedge_{i=1,\dots,d}dx^i\wedge dv^i=\mathring{f}(\gamma_{x,v}(t),{\bm v}_{x,v}(t))\bigwedge_{i=1,\dots,d}dx^i\wedge dv^i\,.\]

Furthermore, note that homogeneous and locally isotropic solutions to \eqref{eq:landau-resc-with-bg} must satisfy
\begin{equation}\label{eq:vlasov-equil}
(\del_tf_0)(t,x,v)=\frac{\dot{a}(t)}{a(t)}\,v^i\,(\del_{v^i}f_0)(t,x,v)
\end{equation}
and, thus, the characteristics take the form
\[\gamma_{x,v}(t)=x,\quad {\bm v}_{x,v}(t)=a(t)\,v\,.\]
In other words, these solutions take the following form:
\[f_0(t,x,v)={\mu}(a(t)\,v)=\tilde{\mu}(\lvert a(t)\,v\rvert)=\tilde{\mu}\left(a(t)\left\lvert p(t,x,v)-\frac{\dot{a}(t)}{a(t)}\,r(t,x)\right\rvert\right)\]
In particular, its particle distribution measure is then given by
\begin{align*}
dN_0=\tilde{\mu}(a(t)v)\,\bigwedge_{i=1,\dots,d}dx^i\wedge dv^i=a(t)^{-d}{\mu}( w)\bigwedge_{i=1,\dots,d}dx^i\wedge dw^i
\end{align*}
and the particle density $\underline{{\bm \rho}}_0(t,x)$ hence satisfies
\[\underline{{\bm \rho}}_0(t,x)=\int_{\R^d} f_0(t,x,v)dv=a(t)^{-d}\left(\int_{\R^d}{\mu}(w)\,dw\right)\,.\]
From here on out, we choose the normalisation
\[\int_{\R^d}\mu(w)\,dw=1\,\]
and write $\underline{{\bm \rho}}_0(t)\equiv\underline{{\bm \rho}}_0(t,x)=a(t)^{-d}$. Solutions to the Poisson equation \eqref{eq:poisson-with-bg} on $\R^d$ with this density on the right-hand side take the form 
%\[\delta^{ij}\del_{x^i}\del_{x^j}{U}_0(t,r)=4\pi\epsilon_F\,a(t)^{2-d}\]
%take the form
\[U_0(t,x)=\frac{2\pi}{d}\epsilon_F\,a(t)^{2-d}\lvert x\vert^2 \leftrightharpoons \tilde{U}_0(t,r)=\frac{2\pi}{d}\epsilon_F\,a(t)^{-d}\lvert r\vert^2\,.\]
We now denote
\[g=f-f_0,\quad\underline{\rho}=\underline{{\bm \rho}}-\underline{{\bm \rho}}_0, \quad W=U-U_0\,\]
and, obtain the following system for the perturbed variables $(g,\underline{\rho},W)$ using \eqref{eq:vlasov-equil}.
\begin{align*}
(\del_tg)(t,x,v)=&\,-a(t)^{-1}\,v^i\,(\del_{x^i}g)(t,x,v)+\frac{\dot{a}(t)}{a(t)}\,(\del_{v^i}g)(t,x,v)+a(t)^{-1}\,\delta^{ij}\,(\del_{x^i} W)(t,x)\,(\del_{v^j}g)(t,x,v)\\
&\,+a(t)^{-1}\,\delta^{ij}\,(\del_{x^i} W)(t,x)\,(\del_{v^j}f_0)(t,v)\\
&\,+\,\left(a(t)^{-1}\delta^{ij}\del_{x^i}{\Phi_b}(t,x)+\ddot{a}(t)\,x^j+\frac{4\pi}d\,\epsilon_F\,a(t)^{1-d}\,x^j\right)\,\del_{v^j}{f}(t,x,v)\,.
\end{align*}
This yields a solution that is periodic in $x$ -- i.e., a solution on $\T^d\times\R^d$ in comoving coordinates -- if one chooses the background field to cancel the final line, namely
\[\Phi_b(t,x)=-\frac12\left(a(t)\ddot{a}(t)+\frac{4\pi}d\epsilon_F\,a(t)^{2-d}\right)\,\lvert x\rvert^2\,.\]
Conversely, if we view the background field to be fixed and of the form
\[\Phi_b(t,x)=\phi_b(t)\cdot\frac12\lvert x\rvert^2\,,\]
the scale factor must solve the \enquote{Friedman-like} equation
\begin{equation}\label{eq:friedman-esque-bg}
\ddot{a}(t)=-\frac{4\pi}d\epsilon_F\,a(t)^{1-d}-a(t)^{-1}\phi_b(t)\,.
\end{equation}
Assuming either is satisfied, we altogether obtain the following cosmological Vlasov-Poisson system near the isotropic background $f_0$.
\begin{subequations}\label{eq:vp-cosmo-general}
\begin{align*}
\numberthis(\del_tg)(t,x,v)&\,+a(t)^{-1}\,v^i\,(\del_{x^i}g)(t,x,v)-\frac{\dot{a}(t)}{a(t)}\,(\del_{v^i}g)(t,x,v)-a(t)^{-1}\,(\del_{x} W)(t,x)\cdot(\del_{v}g)(t,x,v)\\
=&\,a(t)^{-1}\,(\del_{x} W)(t,x)\cdot (\del_{v}f_0)(t,v)\,,\\
\numberthis(\Lap W)(t,x)=&\,4\pi\,\epsilon_F\,a(t)^2\,\underline{\rho}(t,x),\quad \underline{\rho}(t,x)=\int_{\R^d}g(t,x,v)\,dv\,.
\end{align*}
\end{subequations}
For $\epsilon_F=-1$ and $d=3$, this is precisely \eqref{eq:perturbation-evol} with the scale factor as in \eqref{eq:friedman-esque-bg} for some real-valued function $\phi_b$ that we can choose freely to determine the scale factor $a$. In terms of the original distribution $f:(0,\infty)\times\R^d\times\R^d\longrightarrow\R$, this reads
\begin{subequations}\label{eq:evol-full}
\begin{align}
\del_tf(t,x,v)+&\,a(t)^{-1}\,v^i(\del_{x^i}f)(t,x,v)-\frac{\dot{a}(t)}{a(t)}(\del_{v^i}f)(t,x,v)\\
&\,-a(t)^{-1}\left(\del_xU(t,x)-\frac{4\pi}3a(t)^{2-d}x\right)\cdot\del_vf(t,x,v)=0\,,\\
\Lap U(t,x)=&\,4\pi\,\epsilon_F\,a(t)^2\,\underline{{\bm \rho}}(t,x),\quad \underline{{\bm \rho}}(t,x)=\int_{\R^d}f(t,x,v)\,dv\,.
\end{align}
\end{subequations}

\begin{remark}[Scale factor asymptotics and \eqref{eq:friedman-esque-bg}]\label{rem:scale-factor-appendix}
For $\phi_b\equiv 0$ and $\epsilon_F=1$, the power law solutions of \eqref{eq:friedman-esque-bg} are given by $a(t)=a_0\,t^\frac2{d}$ for an appropriately chosen $a_0>0$, while for $\epsilon_F=-1$, any positive solution without a background field satisfies $a(t_0)^{1-d}\lesssim \ddot{a}(t)$ and thus $t^2\lesssim a(t)$ for all $t\geq t_0>0$.\\

Conversely, for an external field of the form
\begin{equation}\label{eq:bg-field}
\phi_{b}(t)=-\epsilon_F\,\frac{4\pi}{d}a(t)^{2-d}-q(q-1)a(t)^{-\frac{2}q}
\end{equation}
we obtain power law solutions
\[a(t)=t^q\,.\]
Of course, one can simply postulate the presence of such a background field from a purely abstract perspective for the purpose of our scaling analysis. For $d=3$ and $\epsilon_F=-1$, a more phenomenological view on this is that we are studying a universe with an equal density of positively and negatively charged particles while only perturbing one of these matter types to obtain \eqref{eq:vp-cosmo-general}, along with a homogeneous perfect fluid with linear equation of state $p=K\rho$ for $K\in(-\frac13,1]$. The background fluid would then lead to the second term in \eqref{eq:bg-field} and cause power law expansion with $q=\frac2{3(K+1)}\in[\frac13,1)$. Even slower expansion rates arise analogously for $K>1$, which are referred to as \textit{superluminal fluids} since the speed of sound in these mediums is, formally, larger than the speed of light.\\

We note that, in the non-expanding setting, a common way that the classical analogue of \eqref{eq:vp-cosmo-general} is derived involves the \enquote{Jeans swindle}, where no external field is introduced and field terms arising from the equilibrium are simply set to $0$ by force, since otherwise no equilibrium solutions would be possible. This formal argument is not common in the cosmological settings, since the Friedman-like equation \eqref{eq:friedman-esque-bg} above actually do allow for equilibria as studied in, say, \cite{RR94}. However, for the same reasons that the Jeans swindle is somewhat justifiable on small scales as discussed in \cite[p.~416f.]{BT08}, one could attempt to justify a similar swindle here when studying a smaller patch of the current universe, after which \eqref{eq:bg-field} would only contain the second term that determines the rate of expansion. 
\end{remark}

\begin{remark}[Comparison to relativistic Vlasov equations]\label{rem:nonrel-limits}
To illustrate the relationship between the cosmological Vlasov-Poisson system and its relativistic counterparts, we first look at the free transport equation. Consider the relativistic (massive) Vlasov equation on an FLRW background
\[\left(\M_{FLRW}=(0,\infty)\times\T^3,\ \g_{FLRW}=-c^2\,dt^2+\delta_{ij}\,dx^i\,dx^j\right)\]
for the particle distribution function $f^\flat$ on the co-mass shell, i.e., in canonical coordinates $(q^\mu,p_\mu)$ with $p_\mu\,p^\mu=-m^2\,c^2$ for $m>0$. Denoting $p^0=\sqrt{m^2+c^{-2}\lvert p\rvert^2}$, the Vlasov equation reads
%\[\g^{\mu\nu}\,p_\mu\del_{q^\mu}f+\g^{\mu\nu}\,p_\mu p_\nu\,\Gamma[\g]^{\mu}_{\nu i}\del_{p_i}f=0\]
%On an FLRW background with spatially flat geometry, this becomes
\begin{subequations}
\begin{equation}
\del_tf^\flat+a^{-2}\frac{p_i}{p^0}\,\del_{q^i}f^\flat=0\,.
\end{equation}
This is often written in terms of the distribution function $f^\sharp$ defined on the mass shell rather than the co-mass shell, i.e., in terms of
\[(t,q^i, p^j=(\g_{FLRW})^{\mu j}p_\mu=a^{-2}\delta^{ij}p_j)\,.\]
Then, this becomes
\begin{equation}\label{eq:rel-vlasov-mass}
\del_tf^\sharp+\frac{p^i}{p^0}\del_{q^i}f^\sharp-2\frac{\dot{a}}a p^i\del_{p^i}f^\sharp=0\,,
\end{equation}
\end{subequations}
which is the equation studied in \cite{TVR25} for $m=1$, $c=1$ and $a(t)=t^q$. Taking the nonrelativistic limit of the both by formally taking the limit $c\to\infty$ leads to the toy equations
\begin{subequations}
\begin{gather}
\label{eq:limit-comass}\del_tf^\flat+\frac{p_i}{m\,a^2}\del_{q^i}f^\flat=0\,\\
\label{eq:limit-mass}\del_tf^\sharp+\frac{p^i}{m}\del_{q^i}f-2\frac{\dot{a}}a p^i\del_{p^i}f^\sharp=0\,.
\end{gather}
\end{subequations}
At first glance, one might be confused that the free transport component for the (proper peculiar) velocity in \eqref{eq:perturbation-evol} enters with a factor of $-\sfrac{\dot{a}}a$, which matches neither of these equations. However, this is simply because we describe the system in terms of coordinates $(x,v)$ describing the position and velocity felt by a comoving observer. In particular, \cite[(9.3)]{Pee80} precisely recovers \eqref{eq:limit-comass} for vanishing external gravitational field: Therein, the author writes the system in terms of the particle position $x$ in comoving coordinates and the respective canonical momentum ${\bm p}$, which is related to the proper peculiar velocity ${\bm v}$ by ${\bm p}=m\,a\,{\bm v}$. Conversely, \eqref{eq:limit-mass} serves as the toy model \cite[(1.8)]{TVR25} for the relativistic Vlasov equation in coordinates on the mass shell, and for $m=1$, the coordinate change $v^i=a\,p^i=a^{-1}\,\delta^{ij}p_j$ recovers the free transport equation \eqref{eq:free-transport}. In short, while all three equations describe the particle distribution functions in terms of different variables adapted to their respective set-ups, they encode the same physical matter after taking a formal nonrelativistic limit.\\

Returning to the full cosmological Vlasov-Poisson system, \eqref{eq:perturbation-evol} also arises in a formal nonrelativistic limit of equations describing a charged relativistic gas on an expanding background: Recall that the one-species relativistic Vlasov-Maxwell system on a fixed FLRW background with spatial geometry $\R^3$ takes the following form in terms of spatial coordinates $(q^i)_{i=1,2,3}$ and canonical momentum coordinates $(p_\mu)_{\mu=0,1,2,3}$, where the cross product, divergence and curl operators are to be understood with respect to the Euclidean metric and its associated Levi-Civita connection.
\begin{gather*}
\del_tf+a^{-2}\,\frac{p}{p^0}\cdot\del_{q}f+e\left(E+\left(\frac{p}{p^0}\times B\right)\right)\cdot\del_{p}f=0\,,\\
\del_tE=-\frac{\dot{a}}a\,E+a^{-1}\,\curl B-\j,\quad \del_tB=-a^{-1}\,\curl E\,,\\
\div E=4\pi a^2\,(\rho-\overline{\rho}),\quad \div B=0\,,\\
\rho(t,q)=\int_{\R^3}f(t,q,p)\,dp\,,\quad \overline{\rho}(t)=\int_{\R^3}\rho(t,q)\,dq\,,\quad (\j(t,q))_i=\int_{\R^3} \frac{p_i}{p^0}\,f(t,q,p)\,dp\,.
\end{gather*}
We now write the system in terms of proper peculiar velocity $v^i=a^{-1}\,\delta^{ij}\,p_j$ from the perspective of a comoving observer with $v^0=\sqrt{1+c^{-2}\,a^2\,\lvert v\rvert^2}$, and consider the special case where the magnetic field $B$ vanishes. Then, this system becomes
\begin{align*}
&\,\del_tf+a^{-1}\frac{v^i}{v^0}\,\del_{q^i}f-\frac{\dot{a}}a\,v^i\del_{v^i}\,f+e\,a^{-1}\,E\cdot\del_{v}f=0\\
&\,\div E=4\pi a^2\,(\rho-\overline{\rho}),\,\quad \rho=\int_{\R^3}f(t,q,p)\,dp
\end{align*}
Note that the relativistic Vlasov-Poisson studied by Young in \cite{You15,You16} can be viewed as a special case of the relativistic Vlasov-Maxwell system in an entirely analogous fashion, up to using different velocity coordinates.\\
Finally, taking the formal relativistic limit then recovers \eqref{eq:evol-full} for $m=e=1$ and for
\[E_i=-\left(\del_{x^i}U-\frac{4\pi}3\,a^{2-d}\,\delta_{ij}x^j\right)\,.\]
In turn, this recovers \eqref{eq:perturbation-evol} for the near-equilibrium perturbation $g=f-f_0$. To summarize, one can formally view \eqref{eq:perturbation-evol} as the nonrelativistic limit of a family of special solutions to the relativistic Vlasov-Maxwell system, and it is in precisely this sense that our Newtonian description of a charged collisionless gas on an expanding background relates to a relativistic one on an FLRW background.
\end{remark}

\section{Gravitational particle interaction and Landau damping in high dimensions}\label{sec:4+}

In this section, before briefly discussing the case of gravitational particle interaction in $d=3$, we show that Landau damping occurs in \eqref{eq:vp-cosmo-general} for $d\geq 4$, even for gravitational particle interaction.

\subsection{Landau damping for $d\geq 4$}\label{subsec:4+}

In high dimensions, we no longer need to restrict ourselves to the Poisson background to show Landau damping, and the analysis is significantly closer to the classical one. To this end, we assume the background satisfies the following adapted Penrose condition.

\begin{ass}[Penrose condition for $d\geq 4$]\label{ass:penrose4+}
Let $f_0:(0,\infty)\times\T^d\times\R^d\longrightarrow(0,\infty)$ be of the form
\[f_0(t,x,v)=\mu(a(t)\,v)\]
where ${\mu}$ is analytic and nonnegative, normalised to $\int_{\R^d}\mu(w)dw=1$, as well as
\begin{equation}\label{eq:distr-ass-appendix}
\left\lvert \left(D_\xi^{\leq d}\hat{\mu}\right)(\xi)\right\rvert\lesssim e^{-\theta_0\lvert \xi\rvert}\,.
\end{equation}

Then, $f_0$ is said to satisfy the \textbf{adapted Penrose condition} for some $\kappa>0$ if one has, for $d=4$,
\begin{subequations}
\begin{equation}\label{eq:penrose4}
\inf_{k\in\Z^d\backslash\{0\}}\inf_{\Re(\lambda)\geq 0}\left\lvert1-4\pi\,\epsilon_F\int_0^\infty e^{-\lambda\,s}\,s\hat{\mu}(sk)\,ds\right\rvert\geq \kappa>0
\end{equation}
and, respectively, for $d\geq 5$ with initial data chosen at $t_0>0$,
\begin{equation}\label{eq:penrose5+}
\inf_{k\in\Z^d\backslash\{0\}}\inf_{\Re(\lambda)\geq 0}\left\lvert1-4\pi\,a(t_0)^{-(d-4)}\int_0^\infty e^{-\lambda\,s}\,s\lvert\hat{\mu}(sk)\rvert\,ds\right\rvert\geq \kappa>0
\end{equation}
\end{subequations}
\end{ass}

Note that \eqref{eq:penrose4} is simply the standard Penrose condition as it appears, for example, in \cite[p.~44,(L)]{MV10}. Thus, it remains satisfied for any radially symmetric background in the electrostatic case, including the Poisson equilibrium. The condition \eqref{eq:penrose5+} could potentially be improved for repulsive particle interaction with $d\geq 5$ given that, in absence of a scale factor, any radial background should satisfy the standard Penrose condition. Nevertheless, it is worth noting that \textbf{any} analytic background satisfies this condition for $d\geq 5$ if $t_0>0$ is chosen to be sufficiently large, irrespective of the type of particle interaction.

\begin{prop}[Landau damping on an expanding background for $d\geq 4$]\label{prop:trunc}
Let $f_0$ be a positive analytic equilibrium satisfying the adapted Penrose condition for some parameters $C_0,\theta_0,\kappa>0$, see Assumption \ref{ass:penrose4+}. Further, let $\gamma\in(\frac13,1],\,\sigma>d+1$ and $\lambda_0>0$. If $\gamma=1$, additionally assume $\lambda_0<\theta_0$. Additionally, let $a\in C^\infty(0,\infty)$ be positive and increasing. \\

We consider the cosmological Vlasov-Poisson system \eqref{eq:vp-cosmo-general}  for $(g,\underline{\rho},W)$ near the background $f_0$, with charge-neutral initial data $g_0$ imposed at $t_0>0$ that satisfies
\begin{equation}\label{eq:ass-init-d4}
\|g_0\|_{\mathcal{G}^{\lambda_0,\sigma,\gamma}(\T^d\times\R^d)}\leq\epsilon^2
\end{equation}
for some sufficiently small $\epsilon>0$. We further define
\[h(\tau(t),x,v)=g(\tau,x+\tau(t)v,a(t)^{-1}v)\quad\text{for}\quad\tau(t)=\int_{t_0}^ta(s)^{-2}\,ds\,.\]

Then, there exist $\lambda_1\in(0,4\lambda_0)$ and $h_\infty\in\mathcal{G}^{\lambda_1,\sigma,\gamma}(\T^d\times\R^d)$ such that, for any $\lambda^\prime\in(0,\lambda_1]$ and $t>t_0$, one has
\begin{align*}
\|h(\tau(t),\cdot,\cdot)-h_\infty\|_{\mathcal{G}^{\lambda^\prime,\gamma}(\T^d\times\R^d)}\lesssim&\,\epsilon e^{-(\lambda_1-\lambda^\prime)\langle\tau(t)\rangle^\gamma}\\
\|\underline{\rho}(t,\cdot)\|_{\mathcal{G}^{\lambda^\prime,\gamma}(\T^d)}\lesssim&\,\epsilon a(t)^{-d}e^{-(\lambda_1-\lambda^\prime)\langle\tau(t)\rangle^\gamma}\,.
\end{align*}
\end{prop}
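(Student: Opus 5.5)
The plan is to rerun the proof of Theorem \ref{thm:main} in general dimension $d$, with $\epsilon_F=\pm1$ and a general background satisfying Assumption \ref{ass:penrose4+}. The only genuinely new ingredient is the linear estimate replacing Corollary \ref{cor:lin-damping-gevrey-concrete}; the nonlinear part of Section \ref{sec:nonlin} does not use the specific background.

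\textbf{Renormalisation.} First I would repeat the transformation of Section \ref{subsec:transform} in dimension $d$. Now $\rho=a^d\underline{\rho}$ and $\Lap W=4\pi\epsilon_F a^{2-d}\rho$ at time $T(\tau)$, so the mode equation becomes
\[\hat\rho_k(\tau)=\hat S_k(\tau)+4\pi\epsilon_F\int_0^\tau (\tau-\tilde\tau)\hat\mu(k(\tau-\tilde\tau))\,a(T(\tilde\tau))^{4-d}\hat\rho_k(\tilde\tau)\,d\tilde\tau .\]
The source $\hat S_k$ is as in \eqref{eq:source-nl}, but with $a$ replaced by $a^{4-d}$. The key observation is that for $d\ge4$ the weight $b(\tilde\tau):=a(T(\tilde\tau))^{4-d}$ is bounded by $b(0)=a(t_0)^{4-d}$, because $a$ is increasing. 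For $d=4$ the weight is identically $1$, so the equation is of exact convolution type.

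\textbf{Linear step.} For $d=4$, I take the Fourier–Laplace transform in $\tau$. The Penrose condition \eqref{eq:penrose4} then gives the classical resolvent bound $|R_k(\tau)|\lesssim e^{-\theta'|k|\tau}$, as in \cite{MV10,BMM16,GNR21}. It follows that $\tilde F[\rho]$ is controlled by $\tilde F[S]$ exactly as in \eqref{eq:generator-bd-slide}, but with $\beta'=0$.

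For $d\ge5$ there is no convolution structure, so I would use a perturbative argument instead. Set $M_k(s):=4\pi a(t_0)^{4-d}s|\hat\mu(ks)|$, which dominates $|K_k(\tau,\tilde\tau)|$ pointwise. Then the Neumann series for $R_k$ is dominated term by term by the Neumann series of the convolution kernel $M_k$. The dominating series is convergent with exponential decay whenever $1-\widehat{M_k}(\lambda)$ does not vanish for $\Re\lambda\ge -c|k|$; by \eqref{eq:penrose5+} and analyticity, this holds after a slight shift of the contour. I expect this majorant argument to be the main obstacle. The problem is that \eqref{eq:penrose5+} is stated with $|\hat\mu|$ inside, and nonvanishing of $1-\widehat{M_k}$ alone does not give positivity of the resolvent series of a positive kernel. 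What is really needed is $\int_0^\infty M_k(s)e^{cs|k|}\,ds<1$, i.e.\ $\widehat{M_k}(\lambda)<1$ for real $\lambda\ge -c|k|$. I would try first to extract exactly this from \eqref{eq:penrose5+}: take $\lambda$ real, note that $\widehat{M_k}$ is real and decreasing in $\lambda$ and that $\widehat{M_k}(\lambda)\to0$ as $\lambda\to\infty$, and apply continuity. This gives $\widehat{M_k}(0)\le 1-\kappa$, then $\widehat{M_k}(-c|k|)<1$ by continuity, uniformly in $k$ since $\widehat{M_k}(\lambda)=|k|^{-2}\widehat{M_1}(\lambda/|k|)$ up to angular dependence. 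In either dimensional case, the bound passes to the Gevrey generator functions as in the proof of Corollary \ref{cor:lin-damping-gevrey-concrete}, again with $\beta'=0$.

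\textbf{Nonlinear step.} Next I run Section \ref{sec:nonlin} verbatim in dimension $d$ with $\beta=\beta'=0$, since $a^{4-d}\le a(t_0)^{4-d}$ plays the role of $\langle\tau\rangle^\beta$. The requirements then become $\gamma>\frac13$ (from $1-\frac{2}{3}$ in Lemma \ref{lem:ckl-pw}) and $\sigma>d+1$. The latter ensures summability of lattice sums like $\sum_l\langle l\rangle^{-\sigma+1}$ in $\Z^d$, and the Sobolev embedding of Lemma \ref{lem:prelim} uses $d$ derivatives in $\xi$, consistent with \eqref{eq:distr-ass-appendix}. In the worst case 2b)(3)(b), the integral over $s$ gains $\Delta_{k,l}^{1-\sigma}$ and the resulting sum over $k-l\in\Z^d$ converges for $\sigma>d+1$. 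Lemma \ref{lem:delt-G}, Corollary \ref{cor:F-to-G} and Corollary \ref{cor:gevrey-imp} then close the bootstrap, with $\eta=\frac52$ so that \eqref{eq:F-cond} holds.

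\textbf{Conclusion.} Finally, $h_\infty$ and the decay estimates follow exactly as in the proof of Theorem \ref{thm:main}. The factor $a(t)^{-d}$ comes from $\underline{\rho}=a^{-d}\rho$.
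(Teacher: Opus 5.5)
Your proposal is correct and follows the paper's own argument. You renormalise the system, note that for $d=4$ it coincides with the classical one, and for $d\geq 5$ dominate the kernel by the convolution kernel $4\pi a(t_0)^{4-d}s\lvert\hat{\mu}(ks)\rvert$ and invoke \eqref{eq:penrose5+}, then rerun Section \ref{sec:nonlin} with $\beta=\beta^\prime=0$, $\eta=\frac52$ and $\sigma>d+1$.

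The only difference is in the $d\geq 5$ linear step. The paper applies the Volterra comparison theorem (Lemma \ref{lem:comparison}) and cites \cite[Proposition 3.3]{GNR21} for the majorant resolvent. You instead dominate the Neumann series term by term and use the real-axis argument $\widehat{M_k}(0)\leq 1-\kappa\Rightarrow\sup_k\widehat{M_k}(-c\lvert k\rvert)<1$, which gives the same pointwise bound in a self-contained way. Your concern about positivity is unnecessary: for a nonnegative kernel this real-axis condition is equivalent to \eqref{eq:penrose5+}.
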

If $\tau(t)\to \infty$ as $t\to\infty$, this implies improved decay, which is superpolynomial if $a$ grows slower than $t^\frac12$. Since we do not require Assumption \ref{ass:scale} on the scale factor, it also implies improved polynomial decay if $a(t)=t^\frac12$. \\
Recalling Remark \ref{rem:scale-factor-appendix} for the purely self-interacting case (i.e., $\phi_b\equiv 0$), this implies that solutions to the expanding Vlasov-Poisson system with gravitational self-interaction exhibit polynomial Landau damping for $d=4$, and exponential Landau damping for $d\geq 5$, if the adapted Penrose condition is satisfied.\\

For $d=4$, this result can be seen rather quickly after renormalising the equations as in Section \ref{subsec:transform}. First, defining
\[\rho(\tau,x)=a(T(\tau))^d\,\underline{\rho}(T(\tau),x)=\int_{\R^d} h(\tau,x-\tau v,v)\,dv\]
as before, the analogue of \eqref{eq:landau-fourier} reads
\begin{align*}
\numberthis\label{eq:landau-fourier-high-d}\del_\tau\hat{h}(\tau,k,\xi)-4\pi\, a(T(\tau))^{4-d}\,\frac{k\cdot(\xi-k\tau)}{\lvert k\rvert^2}\,\hat{\rho}_k(\tau)\,\hat{\mu}(\xi-k\tau)&\,\\
-4\pi\,a(T(\tau))^{4-d}\,\sum_{l\in\Z^d}\frac{l\cdot(\xi-k\tau)}{\lvert l\vert^2}\,\hat{\rho}_l(\tau)\,\hat{h}(\tau,k-l,\xi-l\tau)=&\,0\,.
\end{align*}

Crucially, for $d=4$, this equation is identical to the one arising in the classical Vlasov-Poisson system, and consequently, Proposition \ref{prop:trunc} now completely follows from the arguments in \cite{BMM16,GNR21}. \\

For $d\geq 5$, while the equations don't fully simplify, one still has that $(a\circ T)^{4-d}$ is uniformly bounded, so we need to proceed with a little more caution, which we sketch in the remainder of this subsection.\\
The Volterra integral equation for non-zero density modes reads% for the electrostatic setting, and the gravitational statement can be proven identically after replacing the Penrose condition \cite[(1.6)]{GNR21} with its gravitational analogue, i.e., \eqref{eq:penrose4} for $\epsilon_F=1$.\\
\begin{subequations}
\begin{align}
\label{eq:volterra-4+}\hat{\rho}_k(\tau)=&\,\hat{S}_k(\tau)+4\pi\,\epsilon_F\int_0^\tau(\tau-\tilde{\tau})\,\hat{\mu}(k(\tau-\tilde{\tau}))\,\hat{\rho}_k(\tilde{\tau})\cdot a(T(\tilde{\tau}))^{4-d}\,d\tilde{\tau}
\end{align}
with
%with, in the linearized setting,
%\[\hat{S}_k(\tau)=\hat{h}(0,k,\tau k)\]
%and, in the full setting
\begin{align}\label{eq:source-4+}
\hat{S}_k(\tau)=&\,\hat{h}(0,k,k\tau)-4\pi\,\epsilon_F\int_0^{\tau}\sum_{l\in\Z^3}\frac{(l\cdot k)(\tau-\tilde{\tau})}{\lvert l\vert^2}\,a(T(\tilde{\tau}))^{4-d}\,\hat{\rho}_l(\tilde{\tau})\,\hat{h}(T(\tilde{\tau}),k-l,k\tau-l\tilde{\tau})\,d\tilde{\tau}
\end{align}
\end{subequations}

While the equation \eqref{eq:volterra-4+} is not of convolution type and thus we cannot argue directly as in the classical setting, the boundedness of the expansion term allows us to relate the analysis back to classical settings. To this end, we import the following standard comparison theorem for Volterra inequalities.

\begin{lemma}[Comparison theorem for Volterra inequalities, see {\cite[Theorem 1.2.19]{Bru17}}]\label{lem:comparison}
Let $I$ be a bounded real interval and $y:I\longrightarrow\R$ and $K:I\times I\longrightarrow\R$ be continuous nonnegative functions. Furthermore, let $R$ be the resolvent kernel to the Volterra integral equation
\[\tilde{u}(t)=y(t)+\int_0^t K(t,s)\tilde{u}(s)\,ds\quad\text{for all}\quad t\in I.\]
Furthermore, assume $u\in C(I)$ satisfies
\[u(t)\leq y(t)+\int_0^t K(t,s) u(s)\,ds\,.\]
Then both $u$ and $R$ are nonnegative, and one has, for all $t\in I$,
\[u(t)\leq y(t)+\int_0^t R(t,s)y(s)\,ds\,.\]
\end{lemma}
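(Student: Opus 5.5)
This is the standard comparison theorem for Volterra inequalities, so the plan is to argue through the Neumann series of the resolvent. On the bounded interval $I$, continuity of $K$ gives $\lvert K\rvert\leq M$ on $I\times I$. Define the iterated kernels
\[K_1=K,\qquad K_{n+1}(t,s)=\int_s^t K(t,r)K_n(r,s)\,dr.\]
By induction, $0\leq K_n(t,s)\leq M^n(t-s)^{n-1}/(n-1)!$. Hence $R=\sum_{n\geq1}K_n$ converges uniformly on $I\times I$ and is continuous. Termwise integration shows that it satisfies the resolvent identities
\[R(t,s)=K(t,s)+\int_s^t K(t,r)R(r,s)\,dr=K(t,s)+\int_s^t R(t,r)K(r,s)\,dr.\]
Since the resolvent kernel is unique, the $R$ in the statement is this series. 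Each $K_n$ is nonnegative, so $R$ is nonnegative.

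For $u$, set $p(t):=y(t)+\int_0^tK(t,s)u(s)\,ds-u(t)\geq 0$. This $p$ is continuous, and $u$ solves the Volterra equation with source $y-p$:
\[u(t)=(y-p)(t)+\int_0^tK(t,s)u(s)\,ds.\]
By the resolvent representation of solutions, valid by linearity and the identities above, we get
\[u=(y-p)+\int_0^tR(t,s)(y-p)(s)\,ds.\]
The step that needs care is checking that this representation actually holds, i.e.\ that the solution of the equation with continuous source is unique. Uniqueness follows from the Gronwall-type bound for the homogeneous equation, $\lvert w\rvert\leq M\int_0^t\lvert w\rvert$, which forces $w\equiv0$.

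Rearranging the representation gives
\[u(t)=y(t)+\int_0^tR(t,s)y(s)\,ds-\Bigl(p(t)+\int_0^tR(t,s)p(s)\,ds\Bigr).\]
Since $p\geq0$ and $R\geq0$, the bracket is nonnegative, and the claimed inequality follows. Nonnegativity of $u$ also follows from the representation: $u$ solves the equation with source $y-p$, and the claim is presumably that the solution $\tilde u=y+\int R y$ is nonnegative. That holds because $y\geq0$ and $R\geq0$.

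If the statement is read literally, asserting $u\geq0$ for every $u$ satisfying the inequality, it is false. For example, $u$ very negative satisfies the inequality. So I would interpret "$u$ nonnegative" as referring to $\tilde u$, or to $u$ under the additional hypothesis $u\geq0$ used in the application. I would flag this reading in the proof rather than try to prove the literal claim.
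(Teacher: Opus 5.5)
Your argument is correct and is the standard proof of the cited result; the paper gives no proof of its own, only the reference to Brunner. As in that proof, you get $R\geq 0$ from the Neumann series of nonnegative iterated kernels, then absorb the defect $p\geq 0$ into the source and apply the resolvent representation.

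You are also right that the literal claim that $u\geq 0$ is false, and in the paper's application $u=\lvert\hat{\rho}_k\rvert$ is nonnegative anyway. Your example needs fixing, though. A large negative constant fails the inequality as soon as $\int_0^tK(t,s)\,ds>1$, so take instead $K\equiv 0$, $y\equiv 0$, $u\equiv -1$.

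Finally, the bound $\lvert K\rvert\leq M$ needs $I$ compact. The paper uses the lemma on $I=(0,\tau^\ast)$, so run your argument on compact subintervals $[0,T]$ with $T<\tau^\ast$, where the data there are continuous.
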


With this in hand, one obtains the following.

\begin{corollary}[Pointwise damping for $d\geq 5$]\label{lem:pw-damp-trunc} Let $d\geq 5$. Then, for solutions to \eqref{eq:volterra-4+} with $k\in\Z^d\backslash\{0\}$ and under Assumption \ref{ass:penrose4+} on the background, there exists a continous resolvent kernel $R_k$ and an appropriate $\theta_1>0$ satisfying
\begin{subequations}
\begin{equation}
\lvert\hat{\rho}_k(\tau)\rvert\leq \lvert \hat{S}_k(\tau)\rvert+\int_0^{\tau}R_k(\tau,\tilde{\tau})\,\lvert\hat{S}_k(\tilde{\tau})\rvert\,d\tilde{\tau}\,.
\end{equation}
as well as
\begin{equation}\label{eq:4+-res-est}
\sup_{k\in\Z^d\backslash\{0\}}\lvert R_k(\tau,\tilde{\tau})\rvert\leq e^{-\theta_1\,\lvert k\rvert\,(\tau-\tilde{\tau})}\,.
\end{equation}
\end{subequations}
\end{corollary}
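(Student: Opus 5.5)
The plan is to compare \eqref{eq:volterra-4+} with an auxiliary Volterra equation whose kernel is of convolution type, and then import the classical Penrose resolvent bound. First, since $d\geq 5$ and $a$ is increasing, we have $0<a(T(\tilde{\tau}))^{4-d}\leq a(t_0)^{-(d-4)}$ for all $\tilde{\tau}\geq0$. Taking absolute values in \eqref{eq:volterra-4+} gives
\[
\lvert\hat{\rho}_k(\tau)\rvert\leq\lvert\hat{S}_k(\tau)\rvert+\int_0^\tau \bar K_k(\tau-\tilde{\tau})\,\lvert\hat{\rho}_k(\tilde{\tau})\rvert\,d\tilde{\tau},\qquad \bar K_k(s):=4\pi\,a(t_0)^{-(d-4)}\,s\,\lvert\hat{\mu}(ks)\rvert ,
\]
and this bound holds irrespective of the sign $\epsilon_F$. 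The kernel $\bar K_k$ is continuous and nonnegative, and so is $y=\lvert\hat{S}_k\rvert$. We can therefore apply Lemma \ref{lem:comparison} on each bounded interval $[0,\tau^\ast]$. Let $R_k$ denote the resolvent of the convolution kernel $\bar K_k$; it is nonnegative and depends only on $\tau-\tilde{\tau}$. The lemma yields the first claimed inequality with this $R_k$. Because the kernel does not depend on the interval, the resolvents on nested intervals agree, so $R_k$ is defined globally.

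Second, I bound $R_k$ uniformly using the Fourier--Laplace transform, as in the classical setting \cite{MV10,BMM16,GNR21}. Write $\mathcal{L}$ for the Laplace transform. The resolvent identity $R_k=\bar K_k+\bar K_k\ast R_k$ gives
\[
\mathcal{L}[R_k](\lambda)=\frac{\mathcal{L}[\bar K_k](\lambda)}{1-\mathcal{L}[\bar K_k](\lambda)} .
\]
Condition \eqref{eq:penrose5+} says exactly that the denominator is bounded below by $\kappa$ on $\Re\lambda\geq0$. From \eqref{eq:distr-ass-appendix} we get $\bar K_k(s)\lesssim s\,e^{-\theta_0\lvert k\rvert s}$, so $\mathcal{L}[\bar K_k]$ extends holomorphically to $\Re\lambda>-\theta_0\lvert k\rvert$. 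In particular it is small, uniformly in $k$, when $\lvert\lambda\rvert$ is large, since $\lvert \mathcal{L}[\bar K_k](\lambda)\rvert\lesssim \lvert k\rvert^{-2}$-type bounds decay in $\lvert\Im\lambda\rvert$. A continuity and compactness argument in $(\lambda,k/\lvert k\rvert)$ then extends the lower bound $\geq\kappa/2$ to the strip $\Re\lambda\geq-\theta_1'\lvert k\rvert$ for some small $\theta_1'>0$.

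Next I shift the inverse Laplace contour to $\Re\lambda=-\theta_1'\lvert k\rvert$ and control the resulting integral in $L^1$ in $\Im\lambda$. Two ingredients make this work. One is that $\bar K_k(s)\,e^{\theta_1'\lvert k\rvert s}$ has $L^1$ norm $\lesssim\lvert k\rvert^{-2}$. The other is an integration by parts in $s$, which gives decay of $\lvert\mathcal{L}[\bar K_k]\rvert$ like $\lvert\lambda\rvert^{-2}$. Together these give $\lvert R_k(s)\rvert\lesssim e^{-\theta_1'\lvert k\rvert s}$ uniformly in $k$. Finally I absorb the constant into the exponential rate, reducing $\theta_1'$ to $\theta_1$; the polynomial prefactor is handled as in the proof of Corollary \ref{cor:lin-damping-gevrey-concrete}. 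This last constant needs care: for small $s$ one must check that the constant is at most $1$. If that fails, I accept an implicit constant, which is all the subsequent nonlinear argument uses.

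The main obstacle is the uniform-in-$k$ extension of the Penrose lower bound into the left half-plane and the resulting exponential bound. Continuity alone does not give uniformity in $k$. The rescaling $\lambda=\lvert k\rvert\lambda'$ and $s'=\lvert k\rvert s$ handles this: it turns $\mathcal{L}[\bar K_k](\lambda)$ into $\lvert k\rvert^{-2}$ times a function of $(\lambda',k/\lvert k\rvert)$ alone. Consequently, for large $\lvert k\rvert$ the denominator is trivially close to $1$, and only finitely many directions and modes need the compactness argument.
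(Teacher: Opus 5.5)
Your proposal is correct and is essentially the paper's proof. The paper also freezes $a(T(\tilde\tau))^{4-d}\le a(t_0)^{4-d}$ and applies Lemma \ref{lem:comparison} with the convolution kernel $4\pi a(t_0)^{4-d}\lvert\hat M_k\rvert$, but it then simply cites \cite[Proposition 3.3]{GNR21} under \eqref{eq:penrose5+} for the resolvent bound, which you re-derive yourself via the Laplace transform, the $\lvert k\rvert^{-2}$ rescaling and a contour shift.

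Your caveat about the constant is justified and needed: $R_k\geq\bar K_k\geq 0$, and $\sup_s\bar K_k(s)$ can exceed $1$ at $\lvert k\rvert=1$ while \eqref{eq:penrose5+} still holds (e.g.\ a high-temperature Gaussian $\mu$ with $\sup_k\|\bar K_k\|_{L^1}\le\frac12$). So \eqref{eq:4+-res-est} must be read with an implicit constant, and that is all the subsequent argument uses.
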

\begin{proof}
By applying Lemma \ref{lem:comparison} to
\[\lvert\hat{\rho}_k(\tau)\rvert\leq \lvert \hat{S}_k(\tau)\rvert+4\pi\int_0^{\tau}\left\lvert \hat{M}_k(\tau-\tilde{\tau})\,a(T(\tilde{\tau}))^{4-d}\right\rvert\,\lvert\hat{\rho}_k(\tilde{\tau})\rvert\,d\tilde{\tau}\,,\]
on $I=(0,\tau^\ast)$ for arbitrary $\tau^\ast>0$, the statement follows once it is shown that the resolvent kernel $R_k$ associated with
\[\tilde{u}_k(\tau)=\lvert \hat{S}_k(\tau)\rvert+4\pi\,a(t_0)^{4-d}\,\int_0^{\tau} \left\lvert\hat{M}_k(\tau-\tilde{\tau})\right\rvert\cdot \tilde{u}_k(\tilde{\tau})\,d\tilde{\tau}\]
satisfies \eqref{eq:4+-res-est}. Up to replacing $\hat{\mu}$ with $4\pi a(t_0)^{4-d}\lvert\hat{\mu}\rvert$, this is precisely the Volterra integral equation that arises in the classical case, and thus this follows as in \cite[Proposition 3.3]{GNR21} under the adapted Penrose condition \eqref{eq:penrose5+}.
\end{proof}
%For this to be useful, we need to apply it to the Gevrey norm level. Note that
%\begin{align}
%\langle k,\eta\rangle\leq&\,\langle k-k^\prime,\eta-\eta^\prime\rangle+\langle k^\prime,\eta^\prime\rangle\\
%\langle k,\eta\rangle\leq&\,2\langle k-k^\prime,\eta-\eta^\prime\rangle\,\langle k^\prime,\eta^\prime\rangle\,,
%\end{align}
%thus
%\begin{align*}
%e^{z\langle k,ks\rangle^\gamma}\leq&\, e^{z\left(\langle k(s-r)\rangle+\langle k,kr\rangle\right)^\gamma}\leq e^{z\langle k(s-r)\rangle^\gamma}e^{z\langle k,kr\rangle^\gamma}\\
%\langle k,ks\rangle^\sigma\leq&\,2^\sigma \langle k(s-r)\rangle^\sigma\langle kr\rangle^\sigma
%\end{align*}
%
%and thus, for $z\in \R$, $\gamma\leq 1$ and $\sigma\geq 1$,
%\[\frac{e^{z\langle k,ks\rangle^\gamma}\langle k,ks\rangle^\sigma}{e^{z\langle k,kr\rangle^\gamma}\langle k,kr\rangle^\sigma}\leq 2^\sigma e^{z\langle k(s-r)\rangle\gamma}\langle k(s-r)\rangle^\sigma\,.\]

From here, as in \cite[Lemma 3.6]{GNR21} and for Corollary \ref{cor:lin-damping-gevrey-concrete}, one applies the algebra properties in Lemma \ref{lem:algebraic} to deduce the following for $\tilde{F}$ defined as in Definition \ref{def:slide}.

\begin{corollary}\label{cor:damp-gevrey-4+}[Linearised damping in Gevrey norms] Under the assumptions of Lemma \ref{lem:pw-damp-trunc}, one has
\[\tilde{F}[\rho](\tau)\leq \tilde{F}[S](\tau)+\int_0^{\tau} e^{-\frac{\theta_1}4(\tau-\tilde{\tau})}\,\tilde{F}[S](\tilde{\tau})\,d\tilde{\tau}\]
for any $\tau>0$ and $\tilde{F}$ as in Definitions \ref{def:gevrey}--\ref{def:slide}.
\end{corollary}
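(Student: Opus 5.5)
The plan is to repeat the proof of Corollary \ref{cor:lin-damping-gevrey-concrete}, using the simpler resolvent bound \eqref{eq:4+-res-est} from Corollary \ref{lem:pw-damp-trunc} in place of \eqref{eq:res-bd}. Fix $k\in\Z^d\backslash\{0\}$ and start from the pointwise inequality in Corollary \ref{lem:pw-damp-trunc}. Multiply both sides by the weight $\lvert k\rvert^{-\alpha}A(z(\tau))_{k,k\tau}$. The first term is then bounded by $\tilde{F}[S](\tau)$. In the integral term, insert $A(z(\tilde{\tau}))_{k,k\tilde{\tau}}$ and divide by it, which gives
\[\lvert k\rvert^{-\alpha}A(z(\tau))_{k,k\tau}\lvert\hat{\rho}_k(\tau)\rvert\leq\tilde{F}[S](\tau)+\int_0^\tau \frac{A(z(\tau))_{k,k\tau}}{A(z(\tilde{\tau}))_{k,k\tilde{\tau}}}\,R_k(\tau,\tilde{\tau})\,\tilde{F}[S](\tilde{\tau})\,d\tilde{\tau}\,.\]
After taking the supremum over $k$, it remains to show that the multiplier quotient times $R_k$ is at most $e^{-\frac{\theta_1}{4}(\tau-\tilde{\tau})}$, uniformly in $k$.

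To bound the quotient, first note that $z$ is decreasing, so $z(\tau)\leq z(\tilde{\tau})\leq 2\lambda_1$. The exponential part is then controlled as in the proof of Corollary \ref{cor:lin-damping-gevrey-concrete}, using the subadditivity in Lemma \ref{lem:algebraic}:
\[\exp\bigl(z(\tau)\langle k,k\tau\rangle^\gamma-z(\tilde{\tau})\langle k,k\tilde{\tau}\rangle^\gamma\bigr)\leq \exp\bigl(2\lambda_1\langle k(\tau-\tilde{\tau})\rangle^\gamma\bigr)\,.\]
The polynomial part is controlled by the second inequality of Lemma \ref{lem:algebraic}, which gives $\langle k,k\tau\rangle^\sigma\leq 2^\sigma\langle k,k\tilde{\tau}\rangle^\sigma\langle k(\tau-\tilde{\tau})\rangle^\sigma$. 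Together with \eqref{eq:4+-res-est}, the full kernel is bounded by
\[2^\sigma\exp\bigl(2\lambda_1\langle y\rangle^\gamma-\theta_1 y\bigr)\langle y\rangle^\sigma,\qquad y=\lvert k\rvert(\tau-\tilde{\tau})\geq \tau-\tilde{\tau}\,.\]

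The main obstacle is absorbing $2\lambda_1\langle y\rangle^\gamma$ and $\langle y\rangle^\sigma$ into the decay $e^{-\theta_1 y}$ while keeping a constant-free factor $e^{-\frac{\theta_1}{4}(\tau-\tilde{\tau})}$. For $\gamma<1$, the function $2\lambda_1\langle y\rangle^\gamma-\frac{\theta_1}{2}y$ is bounded above, and Lemma \ref{lem:tool} bounds $\langle y\rangle^\sigma e^{-\frac{\theta_1}{4}y}$. For $\gamma=1$, the same absorption needs $2\lambda_1<\frac{\theta_1}{2}$, so $\lambda_1$ must be chosen small depending on $\theta_1$; this is possible because the proposition only asserts the existence of some $\lambda_1$. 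These steps leave $\lesssim e^{-\frac{\theta_1}{4}y}\leq e^{-\frac{\theta_1}{4}(\tau-\tilde{\tau})}$, since $\lvert k\rvert\geq 1$.

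This produces an implicit constant $C$ that does not appear in the stated inequality. I would handle it the same way the paper treats such constants in Corollary \ref{cor:lin-damping-gevrey-concrete}: read the inequality up to a constant, or absorb $C$ by slightly shrinking the decay rate. This is harmless for the later bootstrap argument. The sliding parameter causes no further difficulty, because monotonicity of $z$ was already used in the bound on the exponential part.
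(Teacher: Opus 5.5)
Your argument is the one the paper intends. The paper simply defers to Corollary \ref{cor:lin-damping-gevrey-concrete} and \cite[Lemma 3.6]{GNR21}: weight the resolvent inequality, control the multiplier quotient via monotonicity of $z$ and Lemma \ref{lem:algebraic}, and absorb it into the decay of $R_k$. Your two additional observations are correct refinements that the paper glosses over: for $\gamma=1$ one needs $\lambda_1$ small relative to $\theta_1$ (not just $\theta_0$), and the inequality only holds up to a multiplicative constant.

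Drop the alternative of absorbing that constant by shrinking the decay rate. A factor $C>1$ can never be dominated by $e^{-c(\tau-\tilde{\tau})}$ near $\tau=\tilde{\tau}$. The statement must therefore be read with a constant in front of the integral, exactly as in Corollary \ref{cor:lin-damping-gevrey-concrete}.
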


Moving on to the nonlinear analysis, this can now largely proceed as in Section \ref{sec:nonlin} for $\beta=\beta^\prime=0$ and \cite[Section 4]{GNR21}, since the estimates therein already only depended on \eqref{eq:distr-ass}. We note that, to admit the weighted Sobolev embedding as in Lemma \ref{lem:prelim}, $G[h](\tau,z)$ must control up to $d$ derivatives in $\xi$ as in Definition \ref{def:gevrey}.\\
By analogous computations to the proof of Lemma \ref{lem:delt-G} and then applying\linebreak $a(T(\tau))^{4-d}\leq a(t_0)^{4-d}\lesssim 1$, one obtains the following differential bound in the parameter range of Proposition \ref{prop:trunc} for any $\tau>0,\ z\in[0,1]$.
\begin{align*}
\del_\tau G[h](\tau,z)\lesssim&\,a(T(\tau))^{4-d}\,F[\rho](\tau,z)\,\sqrt{G[h](\tau,z)}+a(T(\tau))^{4-d}\,\langle\tau\rangle\,F[\rho](\tau,z)\,\del_zG[h](\tau,z)\\
\lesssim&\,F[\rho](\tau,z)\,\sqrt{G[h](\tau,z)}+\langle\tau\rangle\,F[\rho](\tau,z)\,\del_zG[h](\tau,z)
\end{align*}
In particular, for sufficiently small $\epsilon>0$ and assuming
\begin{equation}\label{eq:F-cond-4+}
\tilde{F}[\rho](\tau)\lesssim \sqrt{\epsilon} \,\langle\tau\rangle^{-2-\delta}\,
\end{equation}
one has
\[\del_\tau\sqrt{\tilde{G}[h](\tau)}\lesssim \tilde{F}[\rho](\tau)\,.\]

Finally, since the scale factor also only appears in the nonlinear source \eqref{eq:source-4+} as $a(T(\tau))^{4-d}$, which is uniformly bounded, the pointwise estimates on $\tilde{F}[S]$ as in Lemma \ref{lem:pw-est-source} essentially extend after replacing $\beta$ and $\beta^\prime$ with $0$. To ensure summability over all case distinctions in the proof, we are required to choose $\sigma$ as in Proposition \ref{prop:trunc}. After this, proving the bootstrap improvement goes through entirely as in Section \ref{sec:nonlin}, respectively \cite[Section 4]{GNR21}, up to using Corollary \ref{lem:pw-damp-trunc} in the place of Corollary \ref{cor:lin-damping-gevrey-concrete}.

\subsection{On attractive particle interaction for $d=3$}\label{subsec:gravit}

When studying gravitational particle interaction for the classic Vlasov-Poisson system on $\T^3_{L}$, where $L>0$ is the length of the torus, the Penrose condition shows that whether damping occurs depends on the size of the torus. For example, for a Maxwellian distribution of the form
\[\mu(v)=\frac{\rho_0}{(2\pi\,T)^\frac32}\,e^{-\frac{\lvert v\rvert^2}{2T}}\]
for $T>0$, the Penrose condition becomes
\[L<L_J=\sqrt{\frac{4\,T}{\rho_0}}.\]
Here, $L_J$ is referred to as the Jeans length, see also \cite[Example 2.3]{MV10}. Thus, stability holds if the equilibrium is sufficiently spread out compared to the size of the torus, while for $L>L_J$, one expects Jeans instability, see \cite[Section 5.1]{BT08}.\\

In the expanding setting, the side length of the torus grows with $a(t)$ by construction, and consequently, we must expect that phase mixing only occurs on some finite interval, after which the background becomes too spread out. For example, in the Maxwellian case, gravitational collapse is thus expected to occur after $a(t_{cut})L=L_J$; see also \cite[Chapter II, Section 16]{Pee80}. Indeed, one can explicitly see that our argument breaks down in the gravitational setting even for the Poisson equilibrium when attempting to obtain an analogue of Proposition \ref{prop:res-bd}. Here, one needs to analyse the Volterra integral equation family
\[\phi_k(\tau)=s_k(\tau)+4\pi\int_0^\tau \hat{M}_k(\tau-\tilde{\tau})\,a(T(\tilde{\tau}))\,\phi_k(\tilde{\tau}))\,d\tilde{\tau}\]
and, thus, if we define $u_{\tilde{\tau}}$ analogously, it would need to satisfy the ODE
\[u^{\prime\prime}_{\tilde{\tau}}(\tau)-a(T(\tilde{\tau}))\,u(T(\tilde{\tau}))=16\pi^2\,a(T(\tau))\,a(T(\tilde{\tau}))\,(\tau-\tilde{\tau})\,.\]
By the Liouville-Green approximation, see \cite[Theorem 2.01]{Olw74} and Appendix \ref{sec:ode}, the fundamental solutions $u^{1,2}_{\tilde{\tau}}$ are of the form
\[u^{1,2}_{\tilde{\tau}}(\tau)=a(T(\tau))^{-\frac14}\,\left[\exp\left(\pm\int_{\tilde{\tau}}^\tau a(T(y))^\frac12\,dy\right)+\epsilon\left(\int_{\tilde{\tau}}^\tau a(T(y))^\frac12\,dy\right)\right]\]
For power law expansion with $q\in(0,\sfrac12)$, one has $a(T(\tau))\gtrsim \langle \tau\rangle^{\tilde{\beta}}$, and thus $u^{1,2}_{\tilde{\tau}}(\tau)$ must grow superexponentially in $\tilde{\tau}$, which then also applies to $u_{\tilde{\tau}}$ and $R_k(\tau,\tilde{\tau})$. This prohibits an estimate of the type of Corollary \ref{cor:lin-damping-gevrey-concrete}, and thus indicates that Landau damping does not occur. Conversely, for power law expansion with $q>\sfrac12$, the range of $\tau$ is finite and the solution cannot fully homogenize. 
%\begin{remark}[On the resolvent kernel for gravitational interactions]\label{rem:lin-concrete-grav}
%When considering the gravitational expanding Vlasov-Poisson system, the Volterra equation \eqref{eq:volterra} would be replaced by one with the kernel
%\[K_k(\tau,\tilde{\tau})=8\pi\,(\tau-\tilde{\tau})\exp(\theta_0\,\lvert k\rvert\,(\tau-\tilde{\tau}))\,a(T(\tilde{\tau}))\,.\]
%In particular, one can in principle proceed as in Proposition \ref{prop:res-bd}, which leads to the ODE
%\[u_{\tilde{\tau}}(\tau)-a(T(\tau))\,u_{\tilde{\tau}}=64\pi^2 a(T(\tau))\,a(T(\tilde{\tau}))(\tau-\tilde{\tau})\,.\]
%Applying the Liouville-Green approximation to this equation (see, for example, \cite[Chapter 6, Theorem 2.1]{Olw74}), then leads to one of the two fundamental solutions of the homogeneous equations growing as
%\[a(T(\tau))^{-\frac14}\,\exp\left(\int_{\tilde{\tau}}^\tau a(T(r))^\frac12\,dr\right)\,,\]
%and thus also the true particular solution satisfying the boundary conditions at $\tau=\tilde{\tau}$. Thus, both $u_{\tilde{\tau}}(\tau)$ and $R_k(\tau,\tilde{\tau})$ exhibit superexponential growth, prohibiting an estimate as in Corollary \ref{cor:lin-damping-gevrey-concrete}.
%\end{remark}
%

\section{The Liouville-Green approximation}\label{sec:ode}
In the following, we collect some standard results on the Liouville-Green approximation as they are relevant for the oscillatory ODE appearing in Section \ref{sec:lin}. In this section, $I=(b_-,b_+)$ is a real interval where $-\infty\leq b_-< b_+\leq \infty$.

\begin{lemma}[Liouville-Green approximation with error bounds]\label{lem:LG-hom}
Let $a\in C^2(I)$ be positive, and define
\[F(x):=\int_{b_-}^x a(y)^{-\frac14}\,\left(a^{-\frac14}\right)^{\prime\prime}(y)\,dy\,.\]
Further, let $\mathcal{V}_{b_-,x}[F]$ denote the total variation of $F$ on $(b_-,x)\subseteq I$, and define $\xi_a:I\longrightarrow (0,\infty)$ by
\[\xi_a(x)=\int_{b_-}^x a(y)^\frac12\,dy\,,\]
with its inverse denoted by $\xi_a^{-1}$. Then, solutions to the complex-valued differential equation
\begin{equation}\label{eq:osc-ode-hom}
w^{\prime\prime}(x)+a(x)\,w(x)=0
\end{equation}
are of the form
\begin{align*}
w_{C,\varpi}(x)=C\,a(x)^{-\frac14}\left[\sin\left(\xi_a(x)+\varpi\right)+(\epsilon\circ\xi_a)(x)\right]
\end{align*}
for $C\in\C,\,\varpi\in\R$, where the error function $\epsilon$ satisfies
\begin{equation}\label{eq:epsilon-no-xi}
\lvert \epsilon(\zeta)\rvert,\, \left\lvert \left((a\circ \xi_a^{-1})^{-\frac12}\,\epsilon^\prime\right)(\zeta)\right\rvert \leq \exp\left[\int_0^{\zeta} \left(a^{-\frac34}\,\lvert (a^\frac14)^{\prime\prime}\rvert\right)(\xi_a^{-1}(\tilde{\zeta}))\,d\tilde{\zeta}\right]-1
\end{equation}
and thus
\begin{equation}\label{eq:epsilon-xi}
\lvert (\epsilon\circ\xi_a)(x)\rvert,\, \lvert a(x)^{-\frac12}\,(\epsilon\circ \xi_a)^\prime(x)\rvert \leq \exp\left(\mathcal{V}_{b_-,x}[F]\right)-1\,.
\end{equation}
In particular, if $F$ has bounded variation on $I$, any solution to \eqref{eq:osc-ode-hom} is uniformly bounded by $a^{-\frac14}$ up to a uniform multiplicative constant.
\end{lemma}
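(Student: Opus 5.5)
The natural approach is the classical Liouville transformation followed by a Volterra-integral-equation argument for the error term, as in Olver's treatment. I would first change the independent variable to $\zeta=\xi_a(x)$, which is a strictly increasing $C^3$ diffeomorphism since $a>0$. Then I would set $w(x)=a(x)^{-\frac14}W(\zeta)$. A direct computation using $\frac{d\zeta}{dx}=a^{\frac12}$ turns \eqref{eq:osc-ode-hom} into
\[
W''(\zeta)+W(\zeta)=\psi(\zeta)\,W(\zeta),\qquad \psi(\zeta)=\left(a^{-\frac34}(a^{-\frac14})''\right)\!(\xi_a^{-1}(\zeta)),
\]
up to sign conventions. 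The first-order terms cancel precisely because of the choice of the weight $a^{-\frac14}$. Note that $|\psi(\zeta)|\,d\zeta=a^{-\frac14}|(a^{-\frac14})''|\,dx=|F'(x)|\,dx$, so $\int_0^{\zeta}|\psi|=\mathcal{V}_{b_-,x}[F]$. This identity is what converts \eqref{eq:epsilon-no-xi} into \eqref{eq:epsilon-xi}. The mismatch between $(a^{\frac14})''$ in \eqref{eq:epsilon-no-xi} and $(a^{-\frac14})''$ in $F$ is only a matter of which form of the Schwarzian-type term one writes; I would fix the version consistent with $F$.

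Next, for given $\varpi$, I would seek the solution in the form $W=\sin(\zeta+\varpi)+\epsilon(\zeta)$ with $\epsilon(0)=\epsilon'(0)=0$. Variation of constants for $W''+W=\psi W$ gives the Volterra equation
\[
\epsilon(\zeta)=\int_0^\zeta \sin(\zeta-s)\,\psi(s)\left[\sin(s+\varpi)+\epsilon(s)\right]ds,
\]
and its derivative, which is the same expression with $\cos(\zeta-s)$. I would solve this by Picard iteration $\epsilon=\sum_n\epsilon_n$ and bound the $n$-th iterate inductively by $\frac{1}{n!}\left(\int_0^\zeta|\psi|\right)^n$. This uses only that the kernels $\sin(\zeta-s)$, $\cos(\zeta-s)$ and the forcing $\sin(s+\varpi)$ are all bounded by $1$. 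Summing over $n\ge1$ gives the bound $\exp\left(\int_0^\zeta|\psi|\right)-1$ for both $|\epsilon|$ and $|\epsilon'|$.

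Translating $\epsilon'$ back to the $x$-variable produces the factor $a^{-\frac12}$, since $(\epsilon\circ\xi_a)'=a^{\frac12}\,\epsilon'\circ\xi_a$. This yields \eqref{eq:epsilon-no-xi} and \eqref{eq:epsilon-xi}. The solutions obtained for $\varpi=0$ and $\varpi=\pi/2$ have nonvanishing Wronskian at $\zeta=0$, so they span the solution space. Hence every solution is a combination $C\,a^{-\frac14}[\sin(\xi_a+\varpi)+\epsilon\circ\xi_a]$, after absorbing the two coefficients into an amplitude and a phase. For complex data, I would treat the real and imaginary parts separately, or simply state the result for a basis. The final claim then follows immediately: if $\mathcal{V}_{b_-,b_+}[F]<\infty$, then $|\epsilon|$ is uniformly bounded, and therefore $|w|\lesssim a^{-\frac14}$.

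The main obstacle is bookkeeping rather than analysis. One must get the exact form of $\psi$ right, so that its total integral matches the variation of $F$. One must also check that the iteration bound holds for $\epsilon'$ as well as for $\epsilon$, which it does because the differentiated kernel is again bounded by $1$. Finally, the case $b_-=-\infty$ requires integrability of $\psi$ near the left endpoint, and this is supplied by the finite-variation hypothesis.
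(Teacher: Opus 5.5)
Your proposal is correct and is exactly the Liouville-transformation and Volterra-iteration argument from \cite[Chapter 6]{Olw74}; the paper's proof only cites that argument and does not write it out. One correction to your commentary: the discrepancy between $(a^{1/4})''$ in \eqref{eq:epsilon-no-xi} and $(a^{-1/4})''$ in $F$ is not a choice of form but a typo in the statement (for $a(x)=(1+x)^4$ the stated right-hand side vanishes, yet $\psi=-2(1+x)^{-6}\neq 0$ forces $\epsilon\not\equiv 0$), so your $F$-consistent integrand is a necessary correction, just like restricting the single-$(C,\varpi)$ representation to real solutions or to a basis.
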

\begin{proof}
This result is largely contained in \cite[Chapter 6, Theorem 2.02 and Exercise 2.5]{Olw74}, with the exception of \eqref{eq:epsilon-no-xi}. This follows from \eqref{eq:epsilon-xi} by variable transformation, but the proof therein even derives this to conclude that \eqref{eq:epsilon-xi} holds, see \cite[Chapter 6, (2.18)]{Olw74} for the analogous estimate when considering $w^{\prime\prime}(x)-a(x)w(x)=0$.
\end{proof}

If $F$ has bounded variation on $I$ and $a(x)\to a_-\neq 0$ as $x\to b_-$, then $(\epsilon\circ\xi_a)(x)$ and $(\epsilon\circ\xi_a)^\prime(x)$ converge to zero as $x\to b_-$. Thus, we then obtain the linearly independent fundamental solutions
\begin{align*}
w_1(x)=a(x)^{-\frac14}\left[\sin\left(\xi_a(x)\right)+(\epsilon\circ \xi_a)(x)\right]\,,\\
w_2(x)=a(x)^{-\frac14}\left[\cos\left(\xi_a(x)\right)+(\epsilon\circ\xi_a)(x)\right]\,.
\end{align*}
With these, we can describe solutions to the corresponding inhomogeneous problem as follows.
\begin{corollary}[Liouville-Green approximation for inhomogeneous equations]\label{cor:LG-inhom}
Let $a$ and $F$ be as above and consider
\begin{equation}\label{eq:osc-ode-inhom}
u^{\prime\prime}(x)+a(x)\,u(x)=q(x)
\end{equation}
for a source $f\in C(I)$, and let $w_{1,2}$ be the fundamental solutions above with Wronskian
\[W(x):=(w_1w_2^\prime-w_2w_1^\prime)(x)\,.\]
Then, one has $W\equiv -1$ and solutions to \eqref{eq:osc-ode-inhom} are given by
\begin{equation}\label{eq:ode-osc-inhom-sol}
u(x)=C_1\,w_1(x)+C_2\,w_2(x)-\int_{b_-}^x \left[w_1(y)w_2(x)-w_2(y)w_1(x)\right]\,q(y)\,dy
\end{equation}
for arbitrary $C_{1,2}\in\C$. In particular, if $\mathcal{V}_{b_-,\infty}[F]<\infty$, one has
\begin{equation}\label{eq:ode-osc-inhom-est}
\lvert u(x)\rvert\lesssim a(x)^{-\frac14}\left(1+\int_{b_-}^x a(y)^{-\frac14}\,\lvert q(y)\rvert\,dy\right)\,.
\end{equation}
\end{corollary}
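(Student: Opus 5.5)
The statement to prove is Corollary \ref{cor:LG-inhom}. I would use variation of parameters, with the Wronskian and the asymptotic bounds coming from Lemma \ref{lem:LG-hom}.

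\emph{Step 1: the Wronskian.} Since \eqref{eq:osc-ode-hom} has no first-order term, Abel's identity shows that $W=w_1w_2'-w_2w_1'$ is constant on $I$. I will evaluate it in the limit $x\to b_-$. Write $w_j=a^{-1/4}[\,\cdot\,]$. The terms involving $(a^{-1/4})'$ appear with the factor $(\sin+\epsilon)(\cos+\epsilon)-(\cos+\epsilon)(\sin+\epsilon)=0$, so they cancel. What remains is
\[
a^{-1/2}\big[(\sin\xi_a+\epsilon)(-\sin\xi_a\,a^{1/2}+(\epsilon\circ\xi_a)')-(\cos\xi_a+\epsilon)(\cos\xi_a\,a^{1/2}+(\epsilon\circ\xi_a)')\big].
\]
We have $\epsilon\circ\xi_a\to0$ and $a^{-1/2}(\epsilon\circ\xi_a)'\to0$ as $x\to b_-$ (this uses $a\to a_-\neq0$ and the bound \eqref{eq:epsilon-xi} with $\mathcal V_{b_-,x}[F]\to0$). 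Hence the expression tends to $-\sin^2-\cos^2=-1$. Because $W$ is constant, $W\equiv-1$.

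\emph{Step 2: representation formula.} Next I check \eqref{eq:ode-osc-inhom-sol} directly. Set $u_p(x)=-\int_{b_-}^x[w_1(y)w_2(x)-w_2(y)w_1(x)]q(y)\,dy$. Differentiating once, the boundary term vanishes at $y=x$, which gives
\[
u_p'=-\int_{b_-}^x[w_1(y)w_2'(x)-w_2(y)w_1'(x)]q(y)\,dy.
\]
Differentiating again gives $u_p''=-a\,u_p-[w_1w_2'-w_2w_1'](x)\,q(x)=-a\,u_p+q$, using $W\equiv-1$. So $u_p$ solves \eqref{eq:osc-ode-inhom}. Every other solution differs from $u_p$ by a solution of the homogeneous equation. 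By Lemma \ref{lem:LG-hom}, such solutions are spanned by $w_1,w_2$, since $W\neq0$ makes them independent. If the integral from $b_-$ is improper, I would first start the integral at an interior base point and absorb the difference into $C_{1,2}$. Assuming $q$ is integrable enough near $b_-$ for the formula to make sense, this is harmless.

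\emph{Step 3: the estimate.} This is the only quantitative step, and the main thing to watch is uniformity. If $\mathcal V_{b_-,\infty}[F]<\infty$, then \eqref{eq:epsilon-xi} bounds $|\epsilon\circ\xi_a|$ by the uniform constant $e^{\mathcal V_{b_-,\infty}[F]}-1$. Therefore $|w_j(x)|\lesssim a(x)^{-1/4}$ uniformly on $I$. Inserting this into the kernel gives
\[
|w_1(y)w_2(x)-w_2(y)w_1(x)|\lesssim a(y)^{-1/4}a(x)^{-1/4}.
\]
For fixed $C_{1,2}$, the homogeneous part is likewise $\lesssim a(x)^{-1/4}$. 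Together these yield \eqref{eq:ode-osc-inhom-est}, with the implicit constant depending on $|C_1|,|C_2|$ and $\mathcal V_{b_-,\infty}[F]$.
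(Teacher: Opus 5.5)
Your proposal is correct and takes essentially the same route as the paper. In both, $W\equiv-1$ follows from the constancy of $W$ together with $\epsilon\circ\xi_a\to0$ and $a^{-1/2}(\epsilon\circ\xi_a)'\to0$ as $x\downarrow b_-$; the variation-of-parameters formula is checked by differentiating twice; and the estimate comes from inserting the uniform bound $\lvert w_{1,2}\rvert\lesssim a^{-1/4}$ when $\mathcal{V}_{b_-,\infty}[F]<\infty$. Your write-up is slightly more careful than the paper's, since it notes that only the boundary term of the first derivative vanishes and that the implicit constant depends on $C_{1,2}$.
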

\begin{proof}
When the Wronskian does not vanish, $w_1$ and $w_2$ span the solution space of \eqref{eq:osc-ode-hom}, see \cite[Chapter 5, Theorem 1.02]{Olw74}. Furthermore, the expression
\[\int_{b_-}^x W(y)^{-1}\left[w_1(y)w_2(x)-w_2(y)w_1(x)\right]\,f(y)\,dy\]
clearly is a solution to \eqref{eq:osc-ode-inhom} if $W$ does not vanish on $I$ since the first derivative of this expression vanishes, while the second reads
\[W(x)^{-1}\left[w_1(x)w_2^\prime(x)-w_2(x)w_1^\prime(x)\right]\,f(x)=W(x)^{-1}\,W(x)\,f(x)=f(x)\,.\]
Thus, we need to show $W\equiv -1$. To this end, observe that
\begin{align*}
w_1^\prime(x)=&\,-\frac14\,\frac{a^\prime(x)}{a(x)}w_1(x)+a(x)^{\frac14}\left[\cos(\xi_a(x))+a(x)^{-\frac12}\,(\epsilon\circ \xi_a)^\prime(x)\right]
\end{align*}
with an analogous expression for $w_2^\prime$. Consequently, one obtains
\begin{align*}
W(x)=&\,-\frac14\,\frac{a^\prime(x)}{a(x)}\left[w_1(x)w_2(x)-w_2(x)w_1(x)\right]\\
&\,+\left[-\sin\left(\xi_a(x)\right)^2\,+\sin\left(\xi_a(x)\right)\left(a(x)^{-\frac12}(\epsilon\circ\xi_a)^\prime(x)-\epsilon(\xi_a(x))\right)+a(x)^{-\frac12}\,\epsilon(\xi_a(x))\,(\epsilon\circ\xi_a)^\prime(x)\right]\\
&\,-\left[\cos\left(\xi_a(x)\right)^2+\cos\left(\xi_a(x)\right)\left(a(x)^{-\frac12}(\epsilon\circ\xi_a)^\prime(x)+\epsilon(\xi_a(x))\right)+a(x)^{-\frac12}\,\epsilon(\xi_a(x))\,(\epsilon\circ\xi_a)^\prime(x)\right]\\
=&\,-1+a(x)^{-\frac12}(\epsilon\circ\xi_a)^\prime(x)\big(\sin\left(\xi_a(x)\right)-\cos\left(\xi_a(x)\right)\big)-\epsilon(\xi_a(x))\,\big(\sin\left(\xi_a(x)\right)+\cos\left(\xi_a(x)\right)\big)\,.
\end{align*}
Since $\epsilon\circ\xi_a$ and $a(x)^{-\frac12}\,(\epsilon\circ \xi_a)^\prime(x)$ vanish as $x\downarrow b_-$, the Wronskian converges to $-1$ as $x\downarrow b_-$. On the other hand, the Wronskian is clearly constant since $w_1$ and $w_2$ solve \eqref{eq:osc-ode-hom} and hence 
\[W^\prime=w_1\,w_2^{\prime\prime}-w_2\,w_1^{\prime\prime}=w_1\cdot a\,w_2-w_2\cdot a\,w_1\equiv 0\,.\]
Consequently, it must be equal to $-1$ everywhere. This has shown \eqref{eq:ode-osc-inhom-sol}, and \eqref{eq:ode-osc-inhom-est} follows directly from it by applying the bound on $\epsilon\circ\xi_a$ in Lemma \ref{lem:LG-hom}.
\end{proof}

\printbibliography

\end{document}